\numberwithin{equation}{section}
\newtheorem{thm}{Theorem}[section]
\newtheorem{crl}[thm]{Corollary}
\newtheorem{lma}[thm]{Lemma}
\newtheorem{ppsn}[thm]{Proposition}
\theoremstyle{definition}
\newtheorem{dfn}[thm]{Definition}
\newtheorem{Notation}[thm]{Notation}
\newtheorem{xmpl}[thm]{Example}
\theoremstyle{remark}
\newtheorem{rmrk}[thm]{Remark}
\newtheorem{hypo}[thm]{Hypothesis}
\newtheorem{case}{Case}
\newcommand{\R}{\mathbb{R}}
\newcommand{\N}{\mathbb{N}}
\newcommand{\C}{\mathbb{C}}
\newcommand{\Z}{\mathbb{Z}}
\newcommand{\Rn}{\mathbb{R}^n}
\newcommand{\RRn}{\mathfrak{R}(\Rn)}
\newcommand{\hil}{\mathcal{H}}
\newcommand{\il}{\CMcal{I}}
\newcommand{\ideal}{\CMcal{I}^{1/2}}
\newcommand{\hils}{\mathcal{B}_2(\hil)}
\newcommand{\boh}{\mathcal{B}_1(\hil)}
\newcommand{\bh}{\mathcal{B}(\hil)}
\newcommand{\Tr}{\operatorname{Tr}}
\newcommand{\Tril}{\operatorname{\tau_{\il}}}
\newcommand{\Wtril}{\operatorname{\widetilde{\tau}_{\il}}}
\newcommand{\conv}{\operatorname{conv}}
\newcommand{\clos}{\operatorname{clos}}
\newcommand{\dds}{\dfrac{d}{ds}}
\newcommand{\ddds}{\dfrac{d^{2}}{ds^{2}}}
\newcommand{\cir}{\mathbb{T}}
\newcommand{\la}{\langle}
\newcommand{\ra}{\rangle}
\newcommand{\Hop}{\textbf{H}_{n}}
\newcommand{\V}{\textbf{V}_{n}}
\newcommand{\Wn}{\mathcal{W}_{1}(\R^{n})}
\newcommand{\Wnn}{\mathcal{W}_{2}(\R^{n})}
\newcommand{\Ccrr}{\normalfont{C}_{c}^{n+1}((a,b)^{n})}
\newcommand{\comn}{\normalfont{\text{Com}_{n}}}
\newcommand{\rcomn}{\normalfont{\text{RCom}}}
\newcommand{\W}{\mathcal{W}}
\def\Re{{\mathrm{Re}\,}}
\def\Im{{\mathrm{Im}\,}}
\begin{document}
\title[Trace formulas in higher dimensions]{Trace formulas in higher dimensions}
\author[A. Chattopadhyay]{Arup Chattopadhyay}
\address{Department of Mathematics, Indian Institute of Technology Guwahati, Guwahati, 781039, India}
\email{arupchatt@iitg.ac.in, 2003arupchattopadhyay@gmail.com}

\author[S. Giri]{Saikat Giri}
\address{Department of Mathematics, Indian Institute of Technology Guwahati, Guwahati, 781039, India}
\email{saikat.giri@iitg.ac.in, saikatgiri90@gmail.com}	
	
\author[C. Pradhan]{Chandan Pradhan}
\address{Department of Mathematics, Indian Institute of Science, Bangalore, 560012, India}
\email{chandan.pradhan2108@gmail.com}

\author[A. Usachev]{Alexandr Usachev}
\address{School of Mathematics and Statistics, Central South University, Changsha,  410075, People's Republic of China}
\email{dr.alex.usachev@gmail.com}

\subjclass[2000]{Primary 47A55; Secondary 47A13, 47B10.}
\keywords{symmetrically normed ideal, Lorentz ideal, singular trace, multiple operator integral, Krein trace formula, Koplienko trace formula}

\begin{abstract}	
The paper establishes the Krein and Koplienko trace formulas for multivariable operator functions on symmetrically normed ideals of bounded operators. Results are proved for self-adjoint and maximal dissipative operators. They cover both ideals with normal and singular traces. The admissible function classes considered in the trace formulas include both analytic and non-analytic scalar functions. Results are illustrated with examples.
\end{abstract}

\maketitle

\section{Introduction}
Let $\hil$ be a complex separable Hilbert space. Denote by $\bh$, $\boh$ and $\hils$ the algebras of all bounded, trace class and Hilbert-Schmidt class operators on $\hil$ respectively with $\|\cdot\|$, $\|\cdot\|_{1}$, $\|\cdot\|_{2}$ as the associated norms. Let $\il$ be a symmetrically normed ideal (see Definition \ref{Ideal-Def}) of $\bh$ and $\Tril$ be a trace on $\il$ which is $\|\cdot\|_{\il}$-bounded.

The spectral shift measure (or, in short, SSM) is an important concept in the spectral analysis of quantum systems. Looking into its existence has led to significant advancements in mathematical perturbation theory. This measure comes into play when examining how the spectrum of $H_0 + tV$ ($t\in[0,1]$) changes as a bounded self-adjoint perturbation $V$ is gradually applied to a self-adjoint operator $H_0$. The notion of the first order spectral shift measure originated from Lifshitz's work on quantum theory of crystals \cite{Li52}, and the mathematical theory of this concept was later developed by Krein in a series of papers \cite{Kr53, Kr62}. In \cite{Kr53}, Krein proved that given two self-adjoint operators $H_0$ (possibly unbounded) and $V$ such that $V\in\boh$, there exists a unique absolutely continuous (with respect to the Lebesgue measure) measure $\mu$ on $\R$ such that the following trace formula holds:
\begin{align}\label{Kr-Tr}
\Tr\{\phi(H_0+V)-\phi(H_{0})\}=\int_{\R}\phi'(\lambda)d\mu(\lambda),
\end{align}
for every $\phi\in C^{1}(\R)$ whose derivative $\phi'$ admits the representation
\begin{align*}
\phi'(\lambda)=\int_{\R}e^{-\lambda t}d\nu(t)
\end{align*}
for some finite Borel measure $\nu$ on $\R$. In \cite{Pe16}, Peller resolved a longstanding problem posed by Krein. He obtained precise descriptions of the maximal class of functions for which the formula \eqref{Kr-Tr} holds.  In \cite{MN2015}, Malamud and Neidhardt provided a simple formula for the spectral shift function of a pair of self-adjoint extensions via the Weyl function and boundary operators. Note that in \cite{AzDoSu06}, Azamov, Dodds and Sukochev, extended the result of Krein \eqref{Kr-Tr} to a semi-finite von Neumann algebra setting. 
 
Langer extended trace formulas to cover the case of pairs of non-selfadjoint and non-unitary operators \cite{Lang65}. More precisely, he proved the existence of a spectral shift function for pair of operators $H_0+V$, $H_0$ with $V\in\boh$, assuming additionally that their spectra are contained in the open unit disk. Thus, in particular, the result holds for pairs of strict contractions. The assumption on spectra was removed by Krein \cite{Kr87}. He proved the analogue of the trace formula \eqref{Kr-Tr} for pairs of maximal dissipative operators $H_0 + V$, $H_0$ (or that of contractions) with trace class resolvent difference (with a spectral shift measure instead of a spectral shift function). A special case of a self-adjoint $H_0$ was considered by Rybkin \cite{Ryb84}. A complete analogue of the trace formula \eqref{Kr-Tr} for functions of pairs of maximal dissipative operators $H_0+V$, $H_0$ was first established by Malamud and Neidhardt under an additional assumption $\rho(H_0) \cap \rho(H_0+V) \cap \C_{+} \neq \varnothing$ \cite{MN2015}. Later on, Malamud, Neidhardt and Peller managed to remove the assumption of resolvent sets \cite{PellerMD1, PellerMD2}. A complete survey of the known results on this topic can be found in \cite{PellerMD2} (see also \cite{AcSdCp24,MN2015}). Recent publications \cite{ChaSin21, MNP24} described sufficient conditions for a pair of contractions $H_0+V$, $H_0$ with $V\in\boh$ to have a real-valued spectral shift function. In particular, it is the case when $H_0$ is a strong contraction.

The first order SSM has appeared prominently in inverse problems for one-dimensional Schr\"odinger, Dirac, Jacobi, and CMV operators where the respective perturbation is in the form of a boundary condition that changes the domain of $H_0$ (see, e.g., \cite{ClGe02} and references therein). In addition, in \cite{BiKr62}, Birman and Krein established a connection between the first order SSM and scattering theory, while in \cite{AzCaSu07}, Azamov, Carey, and Sukochev discussed significant applications of the first order SSMs in perturbation theory of Schr\"odinger operators and in noncommutative geometry in the study of spectral flow.

The Hilbert-Schmidt perturbation was first considered by Koplienko \cite{Ko84}. Indeed, he proved that for a given pair of self-adjoint operators $H$ and $H_{0}$ such that $V:=H-H_{0}\in\hils$, there exists an absolutely continuous measure $\mu$ on $\R$ satisfying Koplienko's trace formula:
\begin{align}\label{Kop-Tr}
\Tr\Big\{\phi(H)-\phi(H_{0})-\dds\bigg|_{s=0}\phi(H_{0}+sV)\Big\}=\int_{\R}\phi''(\lambda)d\mu(\lambda)
\end{align}
for rational functions $\phi$ with poles off $\R$. The formula \eqref{Kop-Tr} has been significantly extended: an extended version of the formula \eqref{Kop-Tr} now applies to functions $\phi$ in the Besov class $B_{\infty 1}^2(\R)$, as shown by Peller \cite{Pel05}, and further to a broader function class containing $B_{\infty 1}^2(\R)$ as established by Coine et al. in \cite[Theorem 5.1]{CoLemSkSu19} and \cite[Theorem 4.8, Corollary 4.9]{ChCoPrGi24}. For the extension of second order SSMs to the setting of semi-finite von Neumann algebras, see \cite{DySk09}. The Koplienko trace formula for maximal dissipative operators was also studied; for instance, see \cite{AcSdCp24}.

The Krein and Koplienko trace formulas have been extended beyond the trace class and Hilbert-Schmidt class ideals. In 2014, Dykema and Skripka \cite{DySk14} extended these to a specific class of Lorentz ideals $\mathcal{M}_{\psi}$ with a Dixmier trace on it. Dixmier traces are singular traces (that is, traces vanish on finite rank operators) of a special type. Singular traces play a significant role in both classical and noncommutative geometry, as well as in physics (see, e.g., \cite{CaSu06,Connesbook,LoSuZabook}). In contrast to the classical case, these new perturbation formulas introduce spectral shift measures that are not absolutely continuous \cite{DySk14}.  The development of perturbation formulas for singular traces on the weak trace class ideal opens up new possibilities for applications. In particular, \cite{PoSuToZa14} employs singular traces of a Taylor expansion for a specific function $f(t)=t^p$, which serves as a technical tool for studying Fr\'echet differentiability of the $L^p$-norm of Haagerup $L^p$ spaces. More recently, Xia established an analogue of the Krein trace formula for a specific Lorentz ideal $\mathcal{M}_{\psi}$, $\psi(t)=\log(t)$ and Dixmier traces on it \cite{Xia22}. He proved the existence of spectral shift measures which are not necessarily absolutely continuous with respect to the Lebesgue measure. An interested reader can see \cite{PoSuUsZa15} for the recent developments on higher order trace formulas with perturbation operators from the weak Schatten-von Neumann ideal. 

The natural question formulated by Aleksandrov, Peller, and Potapov in \cite[Open problems]{AlPePo19} is whether the classical perturbation formulas have their multivariable counterparts? To be precise, we state their problem (in a broader context):

\medskip

\textit{Let $\Hop:=(H_1,\ldots,H_n)$, $\Hop(1):=(H_1+V_1,\ldots, H_n+V_n)$ be two tuple of commuting self-adjoint operators (not necessarily bounded) in $\hil$ such that $V_i$ are in symmetrically normed ideal $\il$ of $\bh$. Then do there exist finite measures $\mu_{1},\ldots,\mu_{n}$ on $\Rn$ such that
\begin{align}
\label{PellerProblem}&\tau_{\il}\{f(\Hop(1))-f(\Hop)\}=\sum_{j=1}^{n}\int_{\Rn}\frac{\partial f}{\partial \lambda_{j}}(\lambda_{1},\ldots,\lambda_{n})\,d\mu_{j}(\lambda_{1},\ldots,\lambda_{n}),
\end{align}
holds for a nice class of smooth functions $f$ and a trace $\tau_{\il}$ on $\il$?}

\medskip

In the present paper we answer this question in the affirmative under an additional assumption that $\mathbf{V}_{n}:=(V_{1},\ldots,V_{n})$ is commutative. Note that $\Hop,\Hop(1)$ and $\V$ are all commutative if and only if $\Hop(t)=\Hop+t\V$ is commutative for all $t\in[0,1]$, see Lemma \ref{Sufficient-cond-Comn}.

In Theorem \ref{Kr-Thm2} below, we prove formula \eqref{PellerProblem} for perturbations belonging to a symmetrically normed ideal $\il$ of $\bh$ (under Hypothesis \ref{Assumption}(a)) and for the class of Wiener functions. These assumptions are satisfied by many ideals with normal traces (such as the trace class one) as well as those with singular ones. In particular, the result holds for a family of Lorentz ideals (see Theorem \ref{Kr-Thm3} below). In Theorem \ref{Kp-Thm2} we make a step further and prove the Koplienko trace formula for tuples.
 
By means of dilation theory, we establish the Krein and Koplienko trace formulas for pairs of two-tuple of maximal dissipative operators (see Theorems \ref{D-Thm1} and \ref{D-Thm2}). This relies on Ando's result that commuting pairs of contractions admit commuting unitary dilations, which in turn implies that commuting pairs of maximal dissipative operators admit commuting self-adjoint dilations—a property that fails in general for commuting triples. Consequently, we confine ourselves to the two-variable case. Note also that dilation theory was also used in \cite{ChaSin21,PellerMD2}. Our results are applicable to the Hardy space model (see Example \ref{hardy}).

Also, we provide weaker trace formulas for a tuple of operators based on the assumption stated above. These formulas include integral representations involving derivatives of $f$ upto order $n+1$ (for Krein) and $n+2$ (for Koplienko), as detailed in \eqref{P-R5} and \eqref{P-R7}.\smallskip

The cornerstone of the above results is the following estimates for arbitrary ideals $\il$ of $\bh$ (see Theorem \ref{Kr-Thm1} and Theorem \ref{Kp-Thm1}, respectively).

Let $\Hop(1),\Hop$ be as above. Assuming that the path $\Hop(t) = \Hop + t\V, t \in [0, 1]$, joining $\Hop(1)$ and $\Hop$ is commutative, we obtain 
\begin{align}\label{Kr-Est}
&\left|\Tril\left(\frac{\partial f}{\partial \lambda_{j}}(\Hop(t))V_{j}\right)\right|\le(\tau_{1, \CMcal{I}}+\tau_{2,\CMcal{I}}+\tau_{3,\CMcal{I}}+\tau_{4,\CMcal{I}})\left(|V_{j}|\right)\left\|\frac{\partial f}{\partial \lambda_{j}}\right\|_{L^{\infty}(\Rn)},
\end{align}
for all functions $f$ from the Wiener space $\Wn$ (see \eqref{wiener}), whenever $H_{j}\in\bh$, $V_j\in\il$ for $j=1,\ldots,n$ and 
\begin{align}\label{Kop-Est}
\left|\Tril\left(D_{H_{i},H_{j}}^{f}(t)\right)\right|\le\sum_{k=1}^{4} \left(\tau_{k,\il}(|V_{i}|^{2})\right)^{1/2}\,\left(\tau_{k,\il}(|V_{j}|^{2})\right)^{1/2}\,\,\left\|\frac{\partial^{2} f}{\partial \lambda_{i}\,\partial \lambda_{j}}\right\|_{L^{\infty}(\Rn)},
\end{align}
for all functions $f$ from the rational class $\RRn$ (see \eqref{rational}), whenever $V_j\in\ideal$ ($H_{j}$ need not be bounded) for $j=1,\ldots,n$. Here, traces $\tau_{k,\CMcal{I}}~(1\le k\le 4)$ are components of the Jordan decomposition $\Tril=\tau_{1,\il}-\tau_{2,\il} + i\tau_{3,\il}-i\tau_{4,\il}$.

We prove estimate \eqref{Kr-Est} by applying the theory of multiple operator integrals \cite{PellerMOI} and the joint spectral theorem for a commuting tuple of self-adjoint operators. To establish \eqref{Kop-Est}, however, we only require the spectral theorem, as the regularity of the function class in this case allows us to avoid multiple operator integrals.

\medskip

\noindent\textbf{Significance of our results.} There are no work related to the trace formulas for commuting self-adjoint $n$-tuple, except \cite{Sk15} and \cite{ChGiPr24}. In \cite{Sk15}, the author establishes first and second order trace formulas for a pair of $n$-tuple of commuting self-adjoint contractions. Meanwhile, in \cite{ChGiPr24}, the authors proved higher order trace formulas under the strong assumption that the perturbation belongs to the Hilbert-Schmidt class $\hils$. In both cases, the function $f$ belongs to a certain analytic function class. However, it is of interest to see how much we can relax the restrictions on the operators and the function $f$. In our case, we have done both: we consider pair of $n$-tuple of commuting self-adjoint operators rather than focusing solely on contractive self-adjoint operators. For the Krein trace formula, $f$ belongs to the considerably larger class $\mathcal{W}_{2}(\Rn)$ (see \eqref{wiener}), and for the Koplienko trace formula, $f$ belongs to the rational function class $\mathfrak{R}(\R^n)$ (see \eqref{rational}), which are not necessarily analytic. In the special cases of the trace class ideal and Lorentz ideals our results are proved in the following stronger forms.

\begin{enumerate}[{\normalfont(i)}]
\item For $(\il,\Tril)=(\boh,\Tr)$, we can relax the boundedness condition imposed in Theorem  \ref{Kr-Thm2} on the self-adjoint $n$-tuple $\Hop$, see Theorem \ref{Kr-Thm4}. This establishes formula \eqref{PellerProblem} under an extra assumption that $\mathbf{V}_{n}$ is commutative.
	
\item For $(\il,\Tril)=(\mathcal{M}_\psi,\tau_\psi)$, we can relax this additional assumption on $\mathbf{V}_{n}$, see Theorem \ref{Kr-Thm3}. This  establishes formula \eqref{PellerProblem} for a pair of bounded self-adjoint $n$-tuple and for $f\in\Wnn$. Moreover, if $f\in\RRn$, the commutativity assumption in Theorem \ref{Kr-Thm3} can be relaxed even further, see Remark \ref{D-Rmrk1}. This observation provides further motivation for considering singular traces in the context of \eqref{PellerProblem}.
\end{enumerate}

To the best of the authors' knowledge, the multivariable trace formulas for maximal dissipative operators have never appeared in the literature before. We prove the first and the second order formulas in this setting.

The subsequent sections are organized as follows. Section \ref{Sec2} contains necessary definitions and preliminary results on multiple operator integrals, operator ideals, and traces. In Sections \ref{Sec3} and \ref{Sec4}, we establish first and second order trace formulas, respectively, for an $n$-tuple of self-adjoint operators. Section \ref{Sec5} delves into the investigation of first and second order trace formulas for a pair of maximal dissipative operators. The paper concludes with Section \ref{Sec6}, which presents a weaker version of the trace formula.

\section{Preliminaries and preparatory results}\label{Sec2}	

In this section we gather some definitions and known results for the reader's convenience. For more details on symmetrically normed ideals we refer to \cite{GoKr_book,LoSuZabook1ed,SkBook}, on Lorentz ideals and singular traces --- to \cite{Dixmier,LoSuZabook,SuUs16}, on multiple operator integrals --- to \cite{PellerMOI}.

\subsection{Symmetrically normed ideals} We present the following definition from \cite{LoSuZabook1ed}. For a more detailed treatment of the subject, we refer to \cite{GoKr_book}.

\begin{dfn}\label{Ideal-Def}
An ideal $\il$ of $\bh$, equipped with the norm $\|\cdot\|_{\il}$, is called a symmetrically normed ideal if it is a two sided Banach ideal with respect to the ideal norm $\|\cdot\|_{\il}$ and satisfies the following properties:	
\begin{enumerate}[{\normalfont(i)}]
\item $A\in\bh,\,B\in\il,\,0\le A\le B$ implies $A\in\il$ and $\|A\|_{\il}\le\|B\|_{\il}$,
\item there is a constant $k>0$ such that $\|B\|\le k\|B\|_{\il}$ for every $B\in\il$,
\item for all $A, C\in\bh$ and $B\in\il$, we have $\|ABC\|_{\il}\le\|A\|\|B\|_{\il}\|C\|.$
\end{enumerate}
\end{dfn}

The classical examples of symmetrically normed ideals are the Schatten classes:
\[\mathcal{B}_{p}(\hil):=\left\{T\in\bh:\|T\|_{p}=\left(\Tr(|T|^{p})\right)^{1/p}<\infty\right\},\ \ 1\le p<\infty,\]
where $\Tr$ denotes the standard trace on the trace class ideal $\boh$.

\begin{dfn}\label{positivetrace}
A trace on an ideal $\il$ of $\bh$ is a linear functional $\Tril:\il\rightarrow\C$ such that $$\Tril(AB)=\Tril(BA),\,\,\,\,A\in\il,B\in\bh.$$	
The trace is said to be positive if $$A\in\il,\,A\ge 0\,\,\implies\,\,\Tril(A)\ge 0.$$
Suppose $\il$ has the ideal norm $\|\cdot\|_{\il}$, then we say that a trace $\Tril$ is $\|\cdot\|_{\il}$-bounded if there is a constant $M>0$ such that $$|\Tril(A)|\le M\|A\|_{\il},\,\,\,A\in\il.$$
Note that infimum of such constants $M$ equals $\|\Tril\|_{\il^{*}}$.
\end{dfn}

Note that the purely algebraic question of the existence of traces on a given ideal $\il$ is solved in \cite{DyFiWeWo04}. Further results concerning the existence of various types of traces on symmetrically normed ideals can be found in \cite{SuZa13}.

Moving forward we note the following crucial result from \cite[Theorem 4.2.2]{LoSuZabook1ed}: Let $\Tril$ be a $\|\cdot\|_{\il}$-bounded trace on $\il$. Then $\Tril$ can be written in the form
\begin{align}
\label{Split-Pos-Trace}\tau_{\il}=\tau_{1,\il}-\tau_{2,\il}+i\tau_{3,\il}-i\tau_{4,\il},
\end{align}
where all four traces on the right are positive and bounded.

For every $\alpha>0$ and for every ideal $\il$, we consider the root ideals $$\il^{\alpha}=\Big\{A\in\bh:\,|A|^{1/\alpha}\in\il\Big\}.$$
Note that a positive trace $\Tril$ on $\il$ preserves the Cauchy-Schwarz inequality:$$|\Tril(AB)|\le(\Tril(|A|^{2}))^{1/2}(\Tril(|B|^{2}))^{1/2},\,\,\,A,B\in\ideal.$$
For second order trace formulas, we will ask that $\ideal$ be a symmetrically normed ideal equipped with the norm $\|\cdot\|_{\ideal}$ such that
\begin{equation}\label{idealinequality}
\|AB\|_{\il}\le\|A\|_{\ideal}\|B\|_{\ideal},\,\,A,B\in\ideal.
\end{equation}
A sufficient condition for $\ideal$ to be a normed ideal and for inequality \eqref{idealinequality} to hold is established in \cite[Proposition 2.5]{DySk14}.

\subsection{Lorentz ideals and singular traces}\label{Sec-Dixmier} 

For a concave function $\psi:(0,\infty)\to(0,\infty)$ such that $\lim_{t\to\infty} \psi(t)=\infty$ define the Lorentz ideals as follows:
$$\mathcal{M}_\psi=\bigg\{A\in\mathcal{B}(\hil)~\mbox{is compact}:~\|A\|_{\mathcal{M}_\psi}:=\sup_{n\ge0}\frac{1}{\psi(1+n)}\sum_{k=0}^{n}s(k,A)<\infty\bigg\},$$
where $\{s(k,A)\}_{k=0}^{\infty}$ is a singular value sequence of $A$. More information about singular values can be found in \cite[Chapter II]{GoKr_book}. Assuming additionally that 
\begin{equation}
\label{psi-lim-cond}
\lim_{t\to\infty}\frac{\psi(2t)}{\psi(t)}=1,
\end{equation}
J. Dixmier \cite{Dixmier} proved that for a dilation invariant state $\omega$ on the algebra of all bounded sequences, the functional 
\begin{align}\label{Dixmier-Tr}
\Tr_{\omega}(A)=\omega\bigg(\bigg\{\frac{1}{\psi(1+n)}\sum_{k=0}^{n} s(k,A)\bigg\}_{n\ge0}\bigg), \ 0 \le A\in \mathcal{M}_\psi
\end{align}
extends by linearity to the whole ideal $\mathcal{M}_\psi$. It is not difficult to see that this extended functional is a non-trivial trace and that it is not a scalar multiple of the standard trace.  Nowadays, functionals of the form \eqref{Dixmier-Tr} are termed Dixmier traces. They were the first examples of singular traces (see, e.g., \cite[Definition 1.3.2]{LoSuZabook}).

It turned out that the limiting condition on the function $\psi$ assumed above is not optimal \cite{DPSS}. More precisely, the ideal $\mathcal{M}_\psi$ admits non-trivial Dixmier traces if and only if 
\begin{equation*}
\liminf_{t\to\infty}\frac{\psi(2t)}{\psi(t)}=1.
\end{equation*}
For more on Dixmier traces we refer the reader to \cite{LoSuZasurvey,LoSuZabook,SuUs16}. 

Later on, it became clear that there are plenty of singular traces other than Dixmier traces (see, e.g., \cite[Theorem 4.1.4]{LoSuZabook}).

The following result extends \cite[Proposition 2.6]{DySk14} where it was proved for Dixmier traces. Although the proof stays almost the same, we provide it for the reader's convenience.

\begin{ppsn}
Let $\il=\mathcal{M}_\psi$ be a Lorentz ideal and let $\tau_{\psi}$ be a bounded singular trace on it. If for some $0<\varepsilon<1$ there exists a constant $C>0$ such that
\begin{equation}
\label{psi-growth-condition}
\psi(t)\le C t^\varepsilon, \ t \ge 1,
\end{equation}
then 
\begin{equation}
\label{Dixmier-Property}
\tau_\psi(\il^\alpha)=\{0\}\quad\text{for every}\quad\alpha>\frac1{1-\varepsilon}.
\end{equation}
\end{ppsn}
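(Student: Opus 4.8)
The plan is to show that the two hypotheses force $\mathcal{I}^\alpha$ to sit inside the trace class $\boh$, and then to exploit that a $\|\cdot\|_{\mathcal{M}_\psi}$-bounded singular trace annihilates ideal-norm limits of finite-rank operators.

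Since $\mathcal{I}^\alpha$ is a two-sided ideal of $\bh$, it is the linear span of its positive elements (split $A$ into real and imaginary parts, then each self-adjoint part into its positive and negative parts, all of which stay in the ideal), so by linearity of $\tau_\psi$ it suffices to prove $\tau_\psi(A)=0$ for an arbitrary $0\le A\in\mathcal{I}^\alpha$. Fix such an $A$. By definition $A^{1/\alpha}\in\mathcal{M}_\psi$, so there is a constant $K>0$ with $\sum_{k=0}^{n}s(k,A^{1/\alpha})\le K\psi(1+n)$ for all $n\ge0$. Using $s(k,A^{1/\alpha})=s(k,A)^{1/\alpha}$, the growth bound \eqref{psi-growth-condition}, and the monotonicity of the singular value sequence, one obtains
\[(n+1)\,s(n,A)^{1/\alpha}\le\sum_{k=0}^{n}s(k,A)^{1/\alpha}\le KC\,(1+n)^{\varepsilon},\]
hence $s(n,A)\le (KC)^{\alpha}(1+n)^{\alpha(\varepsilon-1)}$ for all $n$. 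As $\alpha>\frac1{1-\varepsilon}$, the exponent $\alpha(\varepsilon-1)$ is strictly less than $-1$, so $\sum_{n\ge0}s(n,A)<\infty$; in particular $A\in\boh\subseteq\mathcal{M}_\psi$ and $\tau_\psi(A)$ is well-defined.

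Next I approximate $A$ by finite-rank operators in the ideal norm. Write $A=\sum_{k\ge0}s(k,A)\,\langle\cdot,e_k\rangle e_k$ in an orthonormal eigenbasis with decreasing eigenvalues and set $A_N:=\sum_{k=0}^{N-1}s(k,A)\,\langle\cdot,e_k\rangle e_k$. Then $s(k,A-A_N)=s(k+N,A)$, and since the positive concave function $\psi$ is increasing to $\infty$ we have $\psi(1+n)\ge\psi(1)>0$, so
\[\|A-A_N\|_{\mathcal{M}_\psi}=\sup_{n\ge0}\frac{1}{\psi(1+n)}\sum_{k=0}^{n}s(k+N,A)\le\frac{1}{\psi(1)}\sum_{k=N}^{\infty}s(k,A),\]
which tends to $0$ as $N\to\infty$. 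Now $\tau_\psi$ is $\|\cdot\|_{\mathcal{M}_\psi}$-bounded, hence $\|\cdot\|_{\mathcal{M}_\psi}$-continuous, and singular, hence it vanishes on each finite-rank operator $A_N$; therefore $\tau_\psi(A)=\lim_{N\to\infty}\tau_\psi(A_N)=0$, which gives \eqref{Dixmier-Property}.

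The argument is short, and the step carrying the real content is the singular-value estimate in the second paragraph: this is exactly where both hypotheses — the polynomial majorization \eqref{psi-growth-condition} of $\psi$ and the constraint $\alpha>\frac1{1-\varepsilon}$ — combine to push $A$ into $\boh$. The remaining points (that $\mathcal{I}^\alpha$ is spanned by its positive part, the elementary identities $s(k,A^{1/\alpha})=s(k,A)^{1/\alpha}$ and $s(k,A-A_N)=s(k+N,A)$, and $\psi\ge\psi(1)>0$ on $[1,\infty)$) are routine, so I do not anticipate a genuine obstacle.
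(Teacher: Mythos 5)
Your proposal is correct, and its core is the same as the paper's: the identical singular-value estimate $s(n,A)\lesssim(1+n)^{\alpha(\varepsilon-1)}$, obtained from monotonicity of singular values together with \eqref{psi-growth-condition}, which for $\alpha>\frac{1}{1-\varepsilon}$ forces $\mathcal{I}^\alpha\subseteq\boh$. The only real divergence is the final step: the paper simply invokes the known fact that a bounded singular trace vanishes on trace-class operators (citing Lemma 2.5.3 of the Lord--Sukochev--Zanin book), whereas you prove this fact from scratch by truncating the spectral decomposition, estimating $\|A-A_N\|_{\mathcal{M}_\psi}\le\psi(1)^{-1}\sum_{k\ge N}s(k,A)\to0$ (valid since concavity plus $\psi(t)\to\infty$ makes $\psi$ nondecreasing), and using $\|\cdot\|_{\mathcal{M}_\psi}$-continuity of $\tau_\psi$ together with its vanishing on finite-rank operators. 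This makes your argument more self-contained at the cost of a little extra length; it buys nothing beyond replacing a citation. One minor remark: your initial reduction to positive $A$ is unnecessary, since by the definition $\mathcal{I}^\alpha=\{A:|A|^{1/\alpha}\in\il\}$ the estimate applies to $s(n,A)=s(n,|A|)$ for arbitrary $A\in\mathcal{I}^\alpha$, exactly as in the paper (though the reduction is harmless, and the closing approximation step does use positivity, so keeping it is fine for your route).
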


\begin{proof}
Let $A\in\il^\alpha$.
Using the fact that the singular value sequence is decreasing, for every $n\ge0$ we obtain
$$s(n, A)^\frac1\alpha = s(n, A^\frac1\alpha)\le \frac{1}{n+1}\sum_{k=0}^{n}s(k,A^\frac1\alpha)\le \frac{\psi(1+n)}{n+1}\|A^\frac1\alpha\|_\il\le C (1+n)^{\varepsilon-1}\|A^\frac1\alpha\|_\il.$$
Thus, 
$$s(n,A)\le\text{const}\cdot(1+n)^{-\alpha(1-\varepsilon)},$$
where the constant is independent of $\varepsilon$. Using the assumption on $\alpha$ we conclude that the singular value sequence is summable. Hence, the operator $A$ belongs to the trace-class. Since $\tau_\psi$ is bounded singular trace, then $\tau_\psi(A) =0$ \cite[Lemma 2.5.3]{LoSuZabook}.
\end{proof}

\begin{xmpl}
There are many Lorentz ideals satisfying condition \eqref{psi-growth-condition}. Indeed, condition \eqref{psi-lim-cond} implies \eqref{psi-growth-condition} for every $\varepsilon >0$ (see, e.g., \cite[p. 1113]{DySk14}). In particular, these conditions are met by the Lorentz ideal with $\psi(t)=\log(t)$.
\end{xmpl}

The following hypotheses will be in force throughout the remainder of this paper.

\begin{hypo}\label{Assumption}
We consider perturbation operators as an elements of ideal $\il$ or its root ideal $\ideal$, where $\il$ and $\ideal$ satisfy the following conditions:
\begin{enumerate}[{\normalfont(a)}]
\item $\il$ is a symmetrically normed ideal of $\bh$, equipped with the norm $\|\cdot\|_{\il}$, and $\Tril$ is a $\|\cdot\|_{\il}$-bounded trace on $\il$.
\item $\ideal$ is a symmetrically normed ideal with norm $\|\cdot\|_{\ideal}$ and satisfies the inequality \eqref{idealinequality}.
\end{enumerate}
\end{hypo}

\begin{rmrk}
We emphasize here that for our first order trace formulas, we only require $\il$ to satisfy condition (a) of Hypothesis \ref{Assumption}; no condition on $\ideal$ is needed. For the second order trace formulas, we additionally require $\ideal$ to satisfy condition (b).
\end{rmrk}

\begin{xmpl}
Note that the trace class ideal $\il=\boh$, associated with the standard trace $\Tr$, satisfies Definitions \ref{Ideal-Def} and \ref{positivetrace}, as well as the H\"older-like inequality \eqref{idealinequality}. Similarly, the Lorentz ideal $\il=\mathcal{M}_\psi$ also satisfies Definition \ref{Ideal-Def} and the inequality \eqref{idealinequality} (\cite[Proposition 2.5]{DySk14}).
\end{xmpl}

\subsection{Function spaces} Let $m,n\in\N$. Let $C^m(\R^n)$ denote the space of all $m$-times continuously differentiable functions on $\R^n$, $C_{0}(\R^{n})$ the space of continuous functions on $\R^{n}$ that decay to $0$ at infinity, and $C^{m}_{c}(\R^n)$ the subclass of $C^m(\R^n)$ consisting of compactly supported functions. Let $a, b\in\R$. We denote by $C_c^m((a,b)^n)$ the subspace of $C_c^m(\R^n)$ consisting of functions whose closed support is contained in $(a,b)^n$. By $L^{p}(\Rn)$ we denote the standard Lebesgue space on $\Rn$. Given $f\in L^{1}(\Rn)$, we write $\widehat{f}$ for its Fourier transform. Denote the multivariable Wiener space of scalar functions by
\begin{align}\label{wiener}
\mathcal{W}_m(\R^n):=\Big\{ f\in C^m(\R^n):\mathcal{D}^{k}f,\,\widehat{\mathcal{D}^{k}f}\in L^1(\R^n),\,\,\text{for}\,\,0\le k\le m\Big\},
\end{align}              
where $\displaystyle\mathcal{D}^{k}f:=\frac{ \partial^{k}f}{\partial \lambda_1^{i_1}\cdots \partial \lambda_n^{i_n}}$, $i_1,\ldots,i_n\ge 0, i_{1}+\cdots+i_{n}=k$.\\
Let $\mathcal{S}(\Rn)$ denote the class of Schwartz functions on $\Rn$. It follows from \eqref{wiener} that
\[\mathcal{S}(\R^{n})\subset\mathcal{W}_{m}(\Rn)\subset C_{0}(\R^{n}).\]	
Denote the class of rational functions on $\R^{n}$ by
\begin{align}\label{rational}
\mathfrak{R}(\R^n):=\text{span}\left\{ (\lambda_{1},\ldots,\lambda_{n})\in\Rn\mapsto(z-\lambda_{1})^{-k_{1}}\cdots(z-\lambda_{n})^{-k_{n}}:\,\,\Im(z)\neq 0,\,k_{i}\in\N\right\}.
\end{align}
Similarly, denote the subclass by
\begin{align}\label{rationallower}
\mathfrak{R}(\R^{2})^{-}:=\text{span}\left\{(\lambda_{1},\lambda_{2})\in\R^{2}\mapsto(z-\lambda_{1})^{-k_{1}}(z-\lambda_{2})^{-k_{n}}:\,\,\Im(z)< 0,\,k_{1},k_{2}\in\N\right\}.
\end{align}

\subsection{Divided difference}\label{Div-Sec}
We start by revisiting the definition of divided difference of $f:\R\to\C$. For $f\in C^{n}(\R)$, the $n$-th order divided difference $f^{[n]}:\R^{n+1}\to\C$ is defined recursively by
\begin{align*}
f^{[0]}(\lambda)&:=f(\lambda),\\
f^{[n]}(\lambda_{1},\ldots,\lambda_{n+1})&:=\lim_{\lambda\to\lambda_{n+1}}\frac{f^{[n-1]}(\lambda_{1},\ldots,\lambda_{n-1},\lambda)-f^{[n-1]}(\lambda_{1},\ldots,\lambda_{n})}{\lambda-\lambda_{n}}.
\end{align*}	
Now for $f:\Rn\to\C$ we have the following generalization of the divided difference. Consider $f\in C^{k}(\Rn)$. Then the $k$-th order divided difference of $f$ at $j$-th coordinate $f_{j}^{[k]}:\R^{n+k}\to\C$ is defined as follows:
\begin{align}\label{Div-Diff-1}
f_{j}^{[k]}(\lambda_{1},\ldots,\lambda_{j-1},\mu_{1},\ldots,\mu_{k+1},\lambda_{j+1},\ldots,\lambda_{n})=\varphi_{j}^{[k]}(\mu_{1},\ldots,\mu_{k+1}),
\end{align}
where $$\varphi_{j}(\lambda)=f(\lambda_{1},\ldots,\lambda_{j-1},\lambda,\lambda_{j+1},\ldots,\lambda_{n})$$
and $\lambda_{1},\ldots,\lambda_{j-1},\lambda_{j+1},\ldots,\lambda_{n}$ are fixed.

Consider $1\le j<k\le n$. Denote by $f_{j,k}^{[2]}:\R^{n+2}\to\C$ the second order divided difference of $f$ at the $j$-th and $k$-th coordinates, which is defined by
\begin{align}\label{Div-Diff-2}
f_{j,k}^{[2]}(\lambda_{1},\ldots,\lambda_{j-1},\mu_{1},\mu_{2},\lambda_{j+1},\ldots,\lambda_{k-1},\eta_{1},\eta_{2},\lambda_{k+1},\ldots,\lambda_{n})=\lim_{\mu\to\mu_{2}}\frac{\psi_{j}(\mu)-\psi_{j}(\mu_{1})}{\mu-\mu_{1}},
\end{align}
where $$\psi_{j}(\mu)=f_{k}^{[1]}(\lambda_{1},\ldots,\lambda_{j-1},\mu,\lambda_{j+1},\ldots,\lambda_{k-1},\eta_{1},\eta_{2},\lambda_{k+1},\ldots,\lambda_{n}).$$
Here, the first divided difference is taken at the $k$-th coordinate, then at the $j$-th coordinate. Note that \eqref{Div-Diff-2} remains unchanged when the order of the coordinates is interchanged.

\subsection{Function class $\mathfrak{G}(\Rn)$} For a function $f:\Rn\to\C$, we say $f\in\mathfrak{G}(\Rn)$ if there exist a finite measure space $(\Omega,\nu)$ and bounded measurable functions $a_{1},\ldots,a_{n}:\R\times\Omega\to\C$ such that
\begin{align}\label{Function-Rep} f(\lambda_{1},\ldots,\lambda_{n})=\int_{\Omega}a_{1}(\lambda_{1},\sigma)\cdots a_{n}(\lambda_{n},\sigma)\,d\nu(\sigma)
\end{align}
and
\begin{align}\label{Function-Bound}
\int_{\Omega}~\prod_{k=1}^{n}\|a_{k}(\cdot,\sigma)\|_{\infty}\,\,d|\nu|(\sigma)<\infty.
\end{align}

Note that for every $f\in C(\Rn)\cap L^{1}(\Rn)$ with $\widehat{f}\in L^{1}(\Rn)$, we have by the Fourier inversion formula
\begin{align}\label{Fourier-Rep} f(\lambda_{1},\ldots,\lambda_{n})=\int_{\Rn}a_{1}(\lambda_{1},\vec{t})\cdots a_{n}(\lambda_{n},\vec{t})\,d\nu(\vec{t}),
\end{align}
where $a_{k}(\lambda_{k},\vec{t})=e^{it_{k}\lambda_{k}}$ for $1\le k\le n$ and $d\nu(\vec{t})=\frac{\widehat{f}(\vec{t})}{(2\pi)^{n/2}}d\vec{t}$. Here $\vec{t}$ denotes the coordinates $(t_1,\ldots,t_n)$, and $d\vec{t}$ denotes the product measure $dt_1dt_2\cdots dt_n$. Furthermore,
\begin{align*}
\int_{\Rn}~\prod_{k=1}^{n}\|a_{k}(\cdot,\vec{t})\|_{\infty}\,\,d|\nu|(\vec{t})\le\frac{1}{(2\pi)^{n/2}}\|\widehat{f}\|_{L^{1}(\Rn)}<\infty.
\end{align*}
Thus,
$$\Big\{f\in C(\Rn):f,\widehat{f}\in L^{1}(\Rn)\Big\}\subseteq\mathfrak{G}(\Rn).$$

We now present some crucial representations for functions $f\in\Wn$ and $f\in\Wnn$ in the following lemma. The original idea goes back to \cite{AzDo09}.

\begin{lma}\label{Div-Lem}
The following assertions hold:
\begin{enumerate}[{\normalfont(i)}]
\item\label{Div-Lem-R1} Let $f\in\mathcal{W}_{1}(\Rn)$. Then $f_{j}^{[1]}\in\mathfrak{G}(\R^{n+1})$ for every $1\le j\le n$.
\item\label{Div-Lem-R2} Let $f\in\mathcal{W}_{2}(\Rn)$. Then $f_{j}^{[2]}\in\mathfrak{G}(\R^{n+2})$ for every $1\le j\le n$, and $f_{j,k}^{[2]}\in\mathfrak{G}(\R^{n+2})$ for every $1\le j< k\le n$.
\end{enumerate}
\end{lma}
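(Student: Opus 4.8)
The strategy is to reduce the multivariable statement to a one-variable representation result for divided differences. I would first recall the classical fact (going back to the references cited, e.g. \cite{AzDo09}) that for a one-variable function $\phi\in\mathcal{W}_1(\R)$, i.e. $\phi,\phi',\widehat{\phi},\widehat{\phi'}\in L^1(\R)$, the first divided difference admits a representation
\[
\phi^{[1]}(\lambda,\mu)=\int_{\R}\int_0^1 e^{is\lambda t}e^{i(1-s)\mu t}\,ds\,d\nu(t),
\]
coming from writing $e^{i\lambda t}-e^{i\mu t}=i t(\lambda-\mu)\int_0^1 e^{i(s\lambda+(1-s)\mu)t}\,ds$ and absorbing the factor $t$ into the density (which is integrable precisely because $\widehat{\phi'}\in L^1$). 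This exhibits $\phi^{[1]}$ as an element of $\mathfrak{G}(\R^2)$ with the two bounded functions $a_1(\lambda,(s,t))=e^{is\lambda t}$, $a_2(\mu,(s,t))=e^{i(1-s)\mu t}$ over the measure space $(0,1)\times\R$ with the appropriate product measure, and the finiteness condition \eqref{Function-Bound} follows from $\|\widehat{\phi'}\|_{L^1}<\infty$. Similarly, iterating once more gives $\phi^{[2]}\in\mathfrak{G}(\R^3)$ when $\phi\in\mathcal{W}_2(\R)$.

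For part \eqref{Div-Lem-R1}, fix $f\in\mathcal{W}_1(\Rn)$ and $j$. By the Fourier inversion discussion in the excerpt, $f(\lambda_1,\dots,\lambda_n)=\int_{\Rn}\prod_{k=1}^n e^{i t_k\lambda_k}\,d\nu(\vec t)$ with $d\nu(\vec t)=(2\pi)^{-n/2}\widehat f(\vec t)\,d\vec t$; moreover since $\mathcal{D}^1 f\in L^1$ with integrable Fourier transform, in particular $t_j\widehat f(\vec t)\in L^1(\Rn)$. Now $f_j^{[1]}$ is, by definition \eqref{Div-Diff-1}, the first divided difference of the one-variable slice $\phi_j(\lambda)=f(\dots,\lambda,\dots)$ in the $j$-th slot, with the other $\lambda_k$ frozen. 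Applying the one-variable identity inside the integral over $\vec t$ to the factor $e^{it_j\lambda_j}$ only, I get
\[
f_j^{[1]}(\dots,\mu_1,\mu_2,\dots)=\int_{\Rn}\int_0^1 \Big(\prod_{k\ne j}e^{it_k\lambda_k}\Big)\,e^{is t_j\mu_1}\,e^{i(1-s)t_j\mu_2}\;(i t_j)\,ds\,d\nu(\vec t),
\]
which is manifestly of the form \eqref{Function-Rep} on the parameter space $\Omega=(0,1)\times\Rn$: the functions $a_k$ for $k\ne j$ (in the original variables $\lambda_k$) and two new functions in $\mu_1,\mu_2$ are all bounded by $1$, and the $\mathfrak{G}$-norm is controlled by $\int_{\Rn}|t_j|\,|\widehat f(\vec t)|\,d\vec t=(2\pi)^{n/2}\|\widehat{\mathcal{D}^1_j f}\|_{L^1}<\infty$, where $\mathcal{D}^1_j f=\partial f/\partial\lambda_j$. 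Hence $f_j^{[1]}\in\mathfrak{G}(\R^{n+1})$.

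Part \eqref{Div-Lem-R2} is the same mechanism applied twice. For $f_j^{[2]}$ with $f\in\mathcal{W}_2(\Rn)$ I iterate the one-variable divided-difference identity in the single slot $j$ to produce three exponential factors $e^{is_1 t_j\mu_1},e^{is_2 t_j\mu_2},e^{i(1-s_1-s_2)t_j\mu_3}$ (on the $2$-simplex) times $(i t_j)^2$, and absorb $t_j^2\widehat f$, which is integrable since $\partial^2 f/\partial\lambda_j^2\in\mathcal{W}_2$-type hypothesis gives $\widehat{\mathcal{D}^2 f}\in L^1$. For the mixed term $f_{j,k}^{[2]}$ ($j\ne k$), by \eqref{Div-Diff-2} one takes the first divided difference in slot $k$, then in slot $j$; applying the one-variable identity once in the $t_k$-exponential and once in the $t_j$-exponential gives factors $e^{is t_j\mu_1}e^{i(1-s)t_j\mu_2}$ and $e^{ir t_k\eta_1}e^{i(1-r)t_k\eta_2}$ times $(it_j)(it_k)$ over $\Omega=(0,1)^2\times\Rn$, and the norm is bounded by $\int|t_j t_k||\widehat f|\le\tfrac12\big(\|\widehat{\partial^2 f/\partial\lambda_j^2}\|_{L^1}+\|\widehat{\partial^2 f/\partial\lambda_k^2}\|_{L^1}\big)$ by AM–GM (or directly $\|\widehat{\partial^2 f/\partial\lambda_j\partial\lambda_k}\|_{L^1}$), which is finite by definition of $\mathcal{W}_2(\Rn)$. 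This gives $f_{j,k}^{[2]},f_j^{[2]}\in\mathfrak{G}(\R^{n+2})$.

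The only genuine point requiring care — and the part I expect to be the main obstacle — is justifying the interchange of the limits defining the divided differences \eqref{Div-Diff-1}–\eqref{Div-Diff-2} with the integral over $\vec t$ (and over the simplex parameters), so that the pointwise divided difference really equals the claimed integral representation; this is where the hypotheses $\widehat{\mathcal{D}^k f}\in L^1$ for all $0\le k\le m$ are used, via dominated convergence with the dominating function $|t_j|\,|\widehat f(\vec t)|$ (resp.\ $|t_j t_k|\,|\widehat f|$). Everything else is bookkeeping: identifying $\Omega$, the $a_k$'s, and checking \eqref{Function-Bound}.
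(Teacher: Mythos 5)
Your proposal is correct and follows essentially the same route as the paper: expand $f$ via Fourier inversion, represent the divided differences of the exponential factors by Duhamel-type integrals (your normalized $s\in(0,1)$ parametrization is just a rescaling of the paper's integration over $s\in[0,t_j]$, $p\in[0,t_k]$), and verify \eqref{Function-Bound} from $\widehat{\mathcal{D}^k f}\in L^1(\Rn)$. The dominated-convergence point you flag is exactly what underlies the paper's identities \eqref{Div-Lem-R3}--\eqref{Div-Lem-R4}, so there is no gap.
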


\begin{proof}
Let $\lambda_{1},\ldots,\lambda_{n},\mu_{1},\mu_{2},\eta_{1},\eta_{2}\in\R$. If $f\in\mathcal{W}_{2}(\Rn)$, then \eqref{Div-Diff-2} implies that
\begin{align}
\nonumber&f_{j,k}^{[2]}(\lambda_{1},\ldots,\lambda_{j-1},\mu_{1},\mu_{2},\lambda_{j+1},\ldots,\lambda_{k-1},\eta_{1},\eta_{2},\lambda_{k+1},\ldots,\lambda_{n})\\
\nonumber=&-\int_{\Rn}\int_{0}^{t_{j}}\int_{0}^{t_{k}}\frac{\widehat{f}(\vec{t})}{(2\pi)^{n/2}}\bigg[\prod_{\substack{l=1\\l\neq j,k}}^{n}e^{it_{l}\lambda_{l}}\bigg]\,e^{i(t_{j}-s)\mu_{1}}e^{is\mu_{2}}\,e^{i(t_{k}-p)\eta_{1}}e^{ip\eta_{2}}dp\,ds\,d\vec{t}\\
\label{Div-Lem-R3}=&-\int_{\Pi_{1}}\frac{\widehat{f}(\vec{t})}{(2\pi)^{n/2}}\bigg[\prod_{l=1}^{j-1}e^{it_{l}\lambda_{l}}\bigg]\,e^{i(t_{j}-s)\mu_{1}}e^{is\mu_{2}}\bigg[\prod_{l=j+1}^{k-1}e^{it_{l}\lambda_{l}}\bigg]\,e^{i(t_{k}-p)\eta_{1}}e^{ip\eta_{2}}\bigg[\prod_{l=k+1}^{n}e^{it_{l}\lambda_{l}}\bigg]dp\,ds\,d\vec{t},
\end{align}
where $$\Pi_{1}=\left\{(t_{1},\ldots,t_{n},s,p)\in\R^{n+2}:|s|\le|t_{j}|,|p|\le|t_{k}|,~\text{sign}(s)=\text{sign}(t_{j}),\text{sign}(p)=\text{sign}(t_{k})\right\}.$$
	
Let $\mu_{1},\mu_{2},\mu_{3}\in\R$. Then, \eqref{Div-Diff-1} yields
\begin{align}
\nonumber&f_{j}^{[2]}(\lambda_{1},\ldots,\lambda_{j-1},\mu_{1},\mu_{2},\mu_{3},\lambda_{j+1},\ldots,\lambda_{n})\\
\nonumber=&-\int_{\Rn}\int_{0}^{t_{j}}\int_{0}^{s}\frac{\widehat{f}(\vec{t})}{(2\pi)^{n/2}}\bigg[\prod_{l=1}^{j-1}e^{it_{l}\lambda_{l}}\bigg]\,e^{i(t_{j}-s)\mu_{1}}e^{i(s-p)\mu_{2}}e^{ip\mu_{3}}\bigg[\prod_{l=j+1}^{n}e^{it_{l}\lambda_{l}}\bigg]\,dp\,ds\,d\vec{t}\\
\label{Div-Lem-R4}=&-\int_{\Pi_{2}}\frac{\widehat{f}(\vec{t})}{(2\pi)^{n/2}}\bigg[\prod_{l=1}^{j-1}e^{it_{l}\lambda_{l}}\bigg]\,e^{i(t_{j}-s)\mu_{1}}e^{i(s-p)\mu_{2}}e^{ip\mu_{3}}\bigg[\prod_{l=j+1}^{n}e^{it_{l}\lambda_{l}}\bigg]dp\,ds\,d\vec{t},
\end{align}
where $\Pi_{2}=\left\{(t_{1},\ldots,t_{n},s,p)\in\R^{n+2}:|p|\le|s|\le|t_{j}|,~\text{sign}(p)=\text{sign}(s)=\text{sign}(t_{j})\right\}$.\\
Hence \eqref{Div-Lem-R3} and \eqref{Div-Lem-R4} admits the representation \eqref{Function-Rep}. Furthermore, the fact that
\begin{align*}
\widehat{\frac{\partial^{2} f}{\partial \lambda_{j}\partial \lambda_{k}}}\in L^{1}(\Rn)\quad\text{for all}~1\le j\le k\le n,
\end{align*}
satisfies \eqref{Function-Bound}. This proves \eqref{Div-Lem-R2}.  The proof of \eqref{Div-Lem-R1} follows a similar approach like \eqref{Div-Lem-R2}. This completes the proof of the lemma.
\end{proof}

\subsection{Multiple Operator Integral (MOI)} To proceed further we shall need to utilize
multiple operator integral, which is one of our key tool. We use the following definition of the multiple operator integral, introduced in \cite{PellerMOI}. An interested reader can find a more detailed discussion of MOI in \cite{Pe16-Survey}.

\begin{dfn}\label{MOI-Def}
Let $f\in\mathfrak{G}(\Rn)$ be given by \eqref{Function-Rep}. Let $H_{1},\ldots,H_{n}$ be (not necessarily bounded) self-adjoint operators in $\hil$. The multiple operator integral
\begin{align*}
T_{f}^{H_{1},\ldots,H_{n}}:\bh\times\cdots\times\bh\to\bh
\end{align*}
is defined by
\begin{align*}
T_{f}^{H_{1},\ldots,H_{n}}(V_{1},\ldots,V_{n-1})=\int_{\Omega}a_{1}(H_{1},\sigma)V_{1}\cdots V_{n-1}a_{n}(H_{n},\sigma)\,d\nu(\sigma)
\end{align*}
for $V_{j}\in\bh$, where $a_{j}(\cdot,\cdot)$ and $(\Omega,\nu)$ are taken from the representation \eqref{Function-Rep} and the integral is understood in the sense of Bochner integral. The value $T_{f}^{H_{1},\ldots,H_{n}}(V_{1},\ldots,V_{n-1})$  does not depend on the particular representation of the right-hand side of \eqref{Function-Rep} (see, e.g., \cite[Lemma 3.1]{PellerMOI}).
\end{dfn}

The following observations are direct consequence of Lemma \ref{Div-Lem} and Definition \ref{MOI-Def}.

\begin{lma}\label{MOI-Presentation}
Let $n\in\N$ with $n\ge2$. Let $H_{1},\ldots,H_{n+2}$ be (not necessarily bounded) self-adjoint operators, and let $V_{1},\ldots,V_{n+1}\in\bh$. Then, the following assertions hold:
\begin{enumerate}[{\normalfont(i)}]
\item If $f\in\mathcal{W}_{1}(\Rn)$, then
\begin{align}
\nonumber&T_{f_{j}^{[1]}}^{H_{1},\ldots,H_{n+1}}(V_{1},\ldots,V_{n})\\
\label{MOI-1}=&\frac{i}{(2\pi)^{n/2}}\int_{\Rn}\int_{0}^{t_{j}}\widehat{f}(\vec{t})\bigg[\prod_{l=1}^{j-1}e^{it_{l}H_{l}}V_{l}\bigg]\,e^{i(t_{j}-s)H_{j}}V_{j}e^{isH_{j+1}}\bigg[\prod_{l=j+1}^{n}V_{l}e^{it_{l}H_{l+1}}\bigg]dsd\vec{t},
\end{align}
for all $1\le j\le n$.
\item If $f\in\mathcal{W}_{2}(\Rn)$, then
\begin{align}
\nonumber T_{f_{j}^{[2]}}^{H_{1},\ldots,H_{n+2}}&(V_{1},\ldots,V_{n+1})=\frac{-1}{(2\pi)^{n/2}}\int_{\Rn}\int_{0}^{t_{j}}\int_{0}^{s}\widehat{f}(\vec{t})\bigg[\prod_{l=1}^{j-1}e^{it_{l}H_{l}}V_{l}\bigg]\\
\label{MOI-2}&\times e^{i(t_{j}-s)H_{j}}V_{j}\,e^{i(s-p)H_{j+1}}V_{j+1}e^{ipH_{j+2}}\bigg[\prod_{l=j+2}^{n+1}V_{l}e^{it_{l-1}H_{l+1}}\bigg]dpdsd\vec{t},
\end{align}
for all $1\le j\le n$; and
\begin{align}
\nonumber
T_{f_{j,k}^{[2]}}^{H_{1},\ldots,H_{n+2}}&(V_{1},\ldots,V_{n+1})=\frac{-1}{(2\pi)^{n/2}}\int_{\Rn}\int_{0}^{t_{j}}\int_{0}^{t_{k}}\widehat{f}(\vec{t})\bigg[\prod_{l=1}^{j-1}e^{it_{l}H_{l}}V_{l}\bigg]\,e^{i(t_{j}-s)H_{j}}V_{j}e^{isH_{j+1}}V_{j+1}\\
\label{MOI-3}&\times\bigg[\prod_{l=j+1}^{k-1}e^{it_{l}H_{l+1}}V_{l+1}\bigg]e^{i(t_{k}-p)H_{k+1}}V_{k+1}e^{ipH_{k+2}}\bigg[\prod_{l=k+2}^{n+1}V_{l}e^{it_{l-1}H_{l+1}}\bigg]dpdsd\vec{t},
\end{align}
for all $1\le j<k\le n$.
\end{enumerate}
\end{lma}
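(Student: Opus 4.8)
The statement to prove, Lemma \ref{MOI-Presentation}, asserts that the multiple operator integrals associated with the divided differences $f_j^{[1]}$, $f_j^{[2]}$ and $f_{j,k}^{[2]}$ admit the explicit integral formulas \eqref{MOI-1}, \eqref{MOI-2} and \eqref{MOI-3}. The plan is to obtain each formula by substituting the representations of the divided differences from Lemma \ref{Div-Lem} (established via \eqref{Div-Lem-R3} and \eqref{Div-Lem-R4}) directly into Definition \ref{MOI-Def}. The key point is that, by Lemma \ref{Div-Lem}, each of $f_j^{[1]}$, $f_j^{[2]}$, $f_{j,k}^{[2]}$ lies in $\mathfrak{G}$ of the appropriate Euclidean space, so the MOI is well-defined, and — crucially — independent of the chosen representation. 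Thus it suffices to read off the functions $a_k(\cdot,\sigma)$ and the measure $\nu$ from the formulas in the proof of Lemma \ref{Div-Lem} and feed them into the defining Bochner integral.

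First I would treat the case $f\in\mathcal{W}_1(\Rn)$. Writing $f_j^{[1]}$ in the form \eqref{Function-Rep} as in the proof of Lemma \ref{Div-Lem}\eqref{Div-Lem-R1} — which, mirroring \eqref{Div-Lem-R4}, has the scalar integrand $\widehat{f}(\vec t)$ times a product of exponentials in the variables $\lambda_l$ ($l\neq j$), $\mu_1$ and $\mu_2$, integrated over $\vec t$ and over $s$ with $|s|\le|t_j|$, $\mathrm{sign}(s)=\mathrm{sign}(t_j)$ — I replace each scalar exponential factor $e^{i(\cdot)\lambda}$ by the corresponding operator exponential $e^{i(\cdot)H}$, insert the perturbations $V_l$ between consecutive factors as prescribed by Definition \ref{MOI-Def}, and keep the scalar weight $\widehat{f}(\vec t)/(2\pi)^{n/2}$ and the domain of integration. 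Converting the constrained region for $s$ back to the iterated integral $\int_0^{t_j}(\cdot)\,ds$ produces exactly \eqref{MOI-1}; the leading constant becomes $i/(2\pi)^{n/2}$ because of the relabelling of the inner integral (the factor $-1$ in \eqref{Div-Lem-R4} corresponds to the $i^2$ coming from differentiating the two exponentials in $s$, whereas here only one differentiation is involved, leaving a single $i$).

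Next, for $f\in\mathcal{W}_2(\Rn)$ the same mechanism applies with one additional inner variable. Using \eqref{Div-Lem-R4} for $f_j^{[2]}$, I substitute operator exponentials for scalar ones, interleave $V_j,\dots,V_{n+1}$, and rewrite the region $\Pi_2$ as the iterated integral $\int_0^{t_j}\int_0^s(\cdot)\,dp\,ds$; this yields \eqref{MOI-2}, with constant $-1/(2\pi)^{n/2}$ inherited directly from \eqref{Div-Lem-R4}. Likewise, using \eqref{Div-Lem-R3} for $f_{j,k}^{[2]}$ and rewriting $\Pi_1$ as $\int_0^{t_j}\int_0^{t_k}(\cdot)\,dp\,ds$ gives \eqref{MOI-3}. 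In each case one must check that the Bochner integral converges in $\bh$: this follows because $\|e^{i(\cdot)H_l}\|=1$ and $\|V_l\|<\infty$, so the norm of the integrand is dominated by $|\widehat{f}(\vec t)|\prod_l\|V_l\|$ times the (finite) volume of the $s,p$-simplex, and $\widehat{\mathcal{D}^k f}\in L^1(\Rn)$ for $f\in\mathcal{W}_m(\Rn)$ controls the $\vec t$-integration — this is precisely the condition \eqref{Function-Bound} verified in the proof of Lemma \ref{Div-Lem}.

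The only genuine subtlety — and the step I would be most careful about — is bookkeeping of indices and variables: matching which operator $H_l$ sits in which exponential after the divided-difference splitting (note the index shifts, e.g. $H_{l+1}$ appearing for $l\ge j$ in \eqref{MOI-1} and the split shift at $k$ in \eqref{MOI-3}), tracking the signs and the transition between the constrained regions $\Pi_1,\Pi_2$ and the iterated integrals, and confirming the independence of the result on the representation so that the formula is canonical. There is no analytic difficulty beyond the convergence estimate already in hand; the content is entirely a careful transcription, and I would present it by exhibiting the representation of each divided difference, invoking Definition \ref{MOI-Def}, and simplifying. Hence the lemma follows, as claimed.
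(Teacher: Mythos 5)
Your proposal is correct and matches the paper's approach: the paper states this lemma as a direct consequence of Lemma \ref{Div-Lem} and Definition \ref{MOI-Def}, which is exactly the substitution-and-transcription argument you carry out (including the correct accounting of the constants $i$ versus $-1$ and the convergence of the Bochner integral via $\|e^{i(\cdot)H_l}\|=1$ and $\widehat{\mathcal{D}^k f}\in L^1(\Rn)$).
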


The following properties of MOI are crucial in proving our subsequent results.

\begin{thm}\label{MOI-Thm1}
Let $n\in\N$ with $n\ge2$, and let $H_{1},\ldots,H_{n+2}$ be (not necessarily bounded) self-adjoint operators. Then the following assertions hold.
\begin{enumerate}[{\normalfont(i)}]
\item Let $f\in C(\Rn)\cap L^{1}(\Rn)$ with $\widehat{f}\in L^{1}(\Rn)$. Then the mapping $T_{f}^{H_{1},\ldots,H_{n}}:\bh\times\cdots\times\bh\to\bh$ is bounded and 
\begin{align}
\label{MOI-Bound-1}\left\|T_{f}^{H_{1},\ldots,H_{n}}\right\|\le\frac{1}{(2\pi)^{n/2}}~\|\widehat{f}\|_{L^{1}(\Rn)}.
\end{align}
\item Let $f\in\Wn$. Then the mapping $T_{f_{j}^{[1]}}^{H_{1},\ldots,H_{n+1}}:\bh\times\cdots\times\bh\to\bh$ is bounded and 
\begin{align}
\label{MOI-Bound-2}\left\|T_{f_{j}^{[1]}}^{H_{1},\ldots,H_{n+1}}\right\|\le\frac{1}{(2\pi)^{n/2}}~\left\|\widehat{\frac{\partial f}{\partial\lambda_{j}}}\right\|_{L^{1}(\Rn)}\ \ \mbox{for $1\le j\le n$}.
\end{align}
\item Let $f\in\Wnn$. Then the mappings $T_{f_{j,k}^{[2]}}^{H_{1},\ldots,H_{n+2}},T_{f_{j}^{[2]}}^{H_{1},\ldots,H_{n+2}}:\bh\times\cdots\times\bh\to\bh$ are bounded and 
\begin{equation}\label{MOI-Bound-3}
\begin{aligned}
\left\|T_{f_{j,k}^{[2]}}^{H_{1},\ldots,H_{n+2}}\right\|&\le\frac{1}{(2\pi)^{n/2}}~\left\|\widehat{\frac{\partial^{2} f}{\partial\lambda_{j}\partial\lambda_{k}}}\right\|_{L^{1}(\Rn)}\ \ \ \mbox{for $1\le j<k\le n$},\\
\left\|T_{f_{j}^{[2]}}^{H_{1},\ldots,H_{n+2}}\right\|&\le\frac{1}{2\cdot(2\pi)^{n/2}}~\left\|\widehat{\frac{\partial^{2} f}{\partial\lambda_{j}^{2}}}\right\|_{L^{1}(\Rn)}\ \ \ \mbox{for $1\le j\le n$}.
\end{aligned}
\end{equation}
\end{enumerate}
\end{thm}	

\begin{proof}
We only prove \eqref{MOI-Bound-3}. The proofs of \eqref{MOI-Bound-1} and \eqref{MOI-Bound-2} are analogous. Consider $1\le j< k\le n$. Let $f\in\Wnn$. It follows from Lemma \ref{Div-Lem} that $f_{j,k}^{[2]}\in\mathfrak{G}(\R^{n+2})$. Since for any self-adjoint operator $H$ in $\hil$ the map $t\in\R\mapsto e^{itH}$ is strongly continuous, so the Bochner integral \eqref{MOI-3} converges in the strong operator topology. This shows that
\begin{align}
\nonumber\left\|T_{f_{j,k}^{[2]}}^{H_{1},\ldots,H_{n+2}}(V_{1},\ldots,V_{n+1})\right\|&\le\frac{1}{(2\pi)^{n/2}}\int_{\Rn}\left|t_{j}t_{k}\widehat{f}(\vec{t})\right|d\vec{t}\cdot\prod_{l=1}^{n+1}\left\|V_{l}\right\|\\
\nonumber&=\frac{1}{(2\pi)^{n/2}}\left\|\widehat{\frac{\partial^{2} f}{\partial\lambda_{j}\partial\lambda_{k}}}\right\|_{L^{1}(\Rn)}\cdot\prod_{l=1}^{n+1}\left\|V_{l}\right\|.
\end{align}
This further establishes the estimate \eqref{MOI-Bound-3}. Noting that
\[-t_{j}^{2}\widehat{f}(\vec{t})=\widehat{\frac{\partial^{2} f}{\partial\lambda_{j}^{2}}}(\vec{t})\]
the bound for $T_{f_{j}^{[2]}}^{H_{1},\ldots,H_{n+2}}$ can be proved analogously. This completes the proof.
\end{proof}

\begin{crl}\label{MOI-Crl}
Let $n\in\N$ with $n\ge2$, and let $H_{1},\ldots,H_{n+2}$ be bounded self-adjoint operators on $\hil$. Then the following assertions hold.
\begin{enumerate}[{\normalfont(i)}]
\item Let $f\in\Wn$ and $V_{j}\in\il$. Then,
\begin{align}\label{MOI-Crl-R1}
\Big\|T_{f_{j}^{[1]}}^{H_{1},\ldots,H_{n+1}}(I,\ldots,I,\underbrace{V_{j}}_{j},I,\ldots,I)\Big\|_{\il}\le\frac{1}{(2\pi)^{n/2}}~\left\|\widehat{\frac{\partial f}{\partial\lambda_{j}}}\right\|_{L^{1}(\Rn)}\cdot\|V_{j}\|_{\il}
\end{align}
holds for all $1\le j\le n$.
\item Let $f\in\Wnn$ and $V_{j},V_{k}\in\ideal$. Then,
\begin{align}
\nonumber&\Big\|T_{f_{j,k}^{[2]}}^{H_{1},\ldots,H_{n+2}}(I,\ldots,I,\underbrace{V_{j}}_{j},I,\ldots,I,\underbrace{V_{k}}_{k+1},I,\ldots,I)\Big\|_{\il}\\
\label{MOI-Crl-R2}\le&\frac{1}{(2\pi)^{n/2}}~\left\|\widehat{\frac{\partial^{2} f}{\partial\lambda_{j}\partial\lambda_{k}}}\right\|_{L^{1}(\Rn)}\cdot\|V_{j}\|_{\ideal}\|V_{k}\|_{\ideal}
\end{align}
holds for all $1\le j<k\le n$, and 
\begin{align}\label{MOI-Crl-R3}
\Big\|T_{f_{j}^{[2]}}^{H_{1},\ldots,H_{n+2}}(I,\ldots,I,\underbrace{V_{j}}_{j},V_{j},I,\ldots,I)\Big\|_{\il}\le\frac{1}{2\cdot(2\pi)^{n/2}}~\left\|\widehat{\frac{\partial^{2}f}{\partial\lambda_{j}^{2}}}\right\|_{L^{1}(\Rn)}\cdot\|V_{j}\|_{\ideal}^{2}
\end{align}
holds for all $1\le j\le n$.
\end{enumerate}	
In addition, if $\il=\boh$, then \eqref{MOI-Crl-R1}-\eqref{MOI-Crl-R3} hold for not necessarily bounded self-adjoint operators $H_{1},\ldots,H_{n+2}$.
\end{crl}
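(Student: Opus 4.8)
The plan is to deduce Corollary \ref{MOI-Crl} from Theorem \ref{MOI-Thm1} by exploiting the ideal property \eqref{Ideal-Def}(iii) and the H\"older-type inequality \eqref{idealinequality}. First I would fix $1\le j\le n$ and look at the explicit representation \eqref{MOI-1} of $T_{f_{j}^{[1]}}^{H_{1},\ldots,H_{n+1}}(I,\ldots,V_{j},\ldots,I)$: with all entries except the $j$-th replaced by the identity, the integrand becomes $e^{i(t_{j}-s)H_{j}}\,V_{j}\,e^{isH_{j+1}}$ multiplied by a bunch of unitaries $e^{it_{l}H_{l}}$ (for $l<j$) and $e^{it_{l}H_{l+1}}$ (for $l>j$), so each integrand is of the form $U_{1}V_{j}U_{2}$ with $U_{1},U_{2}$ products of unitaries, hence $\|U_{1}V_{j}U_{2}\|_{\il}\le\|V_{j}\|_{\il}$ by Definition \ref{Ideal-Def}(iii). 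Since $V_{j}\in\il$ and the ideal is a Banach space, the Bochner integral that defines the MOI converges in $\|\cdot\|_{\il}$ (the scalar weight $\widehat{f}$ is integrable and the integrand has constant $\il$-norm bound $\|V_{j}\|_{\il}$), and passing the norm under the integral gives
\begin{align*}
\bigg\|T_{f_{j}^{[1]}}^{H_{1},\ldots,H_{n+1}}(I,\ldots,V_{j},\ldots,I)\bigg\|_{\il}\le\frac{1}{(2\pi)^{n/2}}\int_{\Rn}|t_{j}|\,|\widehat{f}(\vec{t})|\,d\vec{t}\cdot\|V_{j}\|_{\il},
\end{align*}
and $\int_{\Rn}|t_{j}|\,|\widehat{f}(\vec{t})|\,d\vec{t}=\|\widehat{\partial f/\partial\lambda_{j}}\|_{L^{1}(\Rn)}$, which is \eqref{MOI-Crl-R1}.

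For \eqref{MOI-Crl-R2} I would argue the same way starting from \eqref{MOI-3}: with all but the $j$-th and $k$-th slots set to $I$, each integrand takes the shape $W_{1}\,V_{j}\,W_{2}\,V_{k}\,W_{3}$ where the $W_{i}$'s are products of unitaries $e^{i(\cdot)H}$. Here Definition \ref{Ideal-Def}(iii) alone is not enough because the product $V_{j}V_{k}$ need only lie in $\il$, not $\ideal$; instead one writes $\|W_{1}V_{j}W_{2}V_{k}W_{3}\|_{\il}=\|W_{1}(V_{j}W_{2}V_{k})W_{3}\|_{\il}\le\|V_{j}W_{2}V_{k}\|_{\il}$ and then applies \eqref{idealinequality} in the form $\|V_{j}W_{2}V_{k}\|_{\il}=\|(V_{j})(W_{2}V_{k})\|_{\il}\le\|V_{j}\|_{\ideal}\|W_{2}V_{k}\|_{\ideal}$, using that $\|W_{2}V_{k}\|_{\ideal}=\|\,|W_{2}V_{k}|^{2}\|_{\il}^{1/2}=\|V_{k}^{*}W_{2}^{*}W_{2}V_{k}\|_{\il}^{1/2}=\|\,|V_{k}|^{2}\|_{\il}^{1/2}=\|V_{k}\|_{\ideal}$ since $W_{2}$ is unitary. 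Again the integrand has a constant $\il$-norm bound $\|V_{j}\|_{\ideal}\|V_{k}\|_{\ideal}$, so the Bochner integral converges in $\il$ and the scalar weight integrates against $|t_{j}t_{k}\widehat{f}(\vec{t})|$, giving $\|\widehat{\partial^{2}f/\partial\lambda_{j}\partial\lambda_{k}}\|_{L^{1}(\Rn)}$; this is \eqref{MOI-Crl-R2}. Estimate \eqref{MOI-Crl-R3} is identical, using \eqref{MOI-2} with the two interior slots both equal to $V_{j}$, the extra factor $1/2$ coming from $\int_{0}^{t_{j}}\!\int_{0}^{s}dp\,ds=t_{j}^{2}/2$ (equivalently $\|t_{j}^{2}\widehat{f}/2\|_{L^{1}}=\frac12\|\widehat{\partial^{2}f/\partial\lambda_{j}^{2}}\|_{L^{1}}$), and the same left-ideal/H\"older manipulation $\|W_{1}V_{j}W_{2}V_{j}W_{3}\|_{\il}\le\|V_{j}\|_{\ideal}^{2}$.

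The final clause, that when $\il=\boh$ the bounds persist for unbounded self-adjoint $H_{j}$, needs one extra remark: unboundedness of the $H_{j}$ does not disturb any of the above because only the unitary groups $t\mapsto e^{itH_{j}}$ enter the integrands, these are well defined and strongly continuous for any self-adjoint $H_{j}$ by the spectral theorem (this is exactly the point already used in the proof of Theorem \ref{MOI-Thm1}), and the $\il=\boh$ case is where \eqref{idealinequality} holds with $\ideal=\hils$ and $\|\cdot\|_{\ideal}=\|\cdot\|_{2}$ — all facts recorded in Section \ref{Sec2}. So one simply repeats the two displays above verbatim with $H_{j}$ unbounded.

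I do not anticipate a genuine obstacle here; the only point requiring care — and the one I would state explicitly rather than gloss over — is the reduction $\|W_{2}V_{k}\|_{\ideal}=\|V_{k}\|_{\ideal}$, i.e. that conjugation/multiplication by a unitary is isometric for the $\ideal$-norm $\|A\|_{\ideal}=\|\,|A|^{2}\|_{\il}^{1/2}$; this is where one must use that $|W_{2}V_{k}|^{2}=V_{k}^{*}V_{k}=|V_{k}|^{2}$ for unitary $W_{2}$ together with the symmetric-norm axiom \ref{Ideal-Def}(iii), and it is what makes \eqref{idealinequality} usable after the slots have been populated by exponentials. Everything else is moving a norm inside a Bochner integral, which is legitimate once the integrand is seen to be Bochner integrable in $\il$ with the stated uniform bound.
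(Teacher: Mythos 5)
Your proposal is correct and follows essentially the same route as the paper: both arguments establish Bochner convergence of the representations \eqref{MOI-1}--\eqref{MOI-3} in the ideal norm and then combine the symmetric-norm property of Definition \ref{Ideal-Def}(iii) with the H\"older-type inequality \eqref{idealinequality} and the integrability of $|t_{j}t_{k}\widehat{f}|$, including the same reduction to $\|\cdot\|_{2}$-continuity of \eqref{MOI-Crl-R4} when $\il=\boh$ and the $H_{j}$ are unbounded. Your explicit check that multiplication by the unitaries $e^{itH}$ does not increase the $\il$- and $\ideal$-norms is precisely the step the paper leaves implicit in its one-line appeal to \eqref{idealinequality}.
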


\begin{proof}
We will only establish \eqref{MOI-Crl-R2}; the estimates \eqref{MOI-Crl-R1} and \eqref{MOI-Crl-R3} can be established completely analogously. For $V_{j},V_{k}\in\ideal$ equation \eqref{MOI-3} yields
\begin{align}
\nonumber&T_{f_{j,k}^{[2]}}^{H_{1},\ldots,H_{n+2}}(I,\ldots,I,\underbrace{V_{j}}_{j},I,\ldots,I,\underbrace{V_{k}}_{k+1},I,\ldots,I)\\
\nonumber=&-\int_{\Rn}\int_{0}^{t_{j}}\int_{0}^{t_{k}}\frac{\widehat{f}(\vec{t})}{(2\pi)^{n/2}}\bigg[\prod_{l=1}^{j-1}e^{it_{l}H_{l}}\bigg]\,e^{i(t_{j}-s)H_{j}}V_{j}e^{isH_{j+1}}\bigg[\prod_{l=j+1}^{k-1}e^{it_{l}H_{l+1}}\bigg]\\
\label{MOI-Crl-R5}&\hspace*{2in}\times e^{i(t_{k}-p)H_{k+1}}V_{k}e^{ipH_{k+2}}\bigg[\prod_{l=k+2}^{n+1}e^{it_{l-1}H_{l+1}}\bigg]dpdsd\vec{t}.
\end{align}
Denote the integrand by $\Gamma(\vec{t},s,p)$. Recall from Hypothesis \ref{Assumption}(b) that $\ideal$ is a symmetrically normed ideal satisfying the H\"older-like inequality \eqref{idealinequality}. Also note that for any two self-adjoint operators $A,B\in\bh$ and any $V\in\ideal$, the map \begin{equation}\label{MOI-Crl-R4}
(s,t)\mapsto e^{i(t-s)A}Ve^{isB}
\end{equation}
is continuous in $\|\cdot\|_{\ideal}$. Combining these observations, we conclude that the map $$(\vec{t},s,p)\in\Pi_{1}\mapsto\Gamma(\vec{t},s,p)\in\il$$
is continuous in the ideal norm $\|\cdot\|_{\il}$, where $\Pi_{1}$ is given by \eqref{Div-Lem-R3}. Consequently, the Bochner integral \eqref{MOI-Crl-R5} converges in $\|\cdot\|_{\il}$. Taking the ideal norm $\|\cdot\|_{\il}$ on both sides of \eqref{MOI-Crl-R5} and applying \eqref{idealinequality} yields the estimate \eqref{MOI-Crl-R2}.
	
In particular, if $\il=\boh$, then the map \eqref{MOI-Crl-R4} is continuous in the Hilbert-Schmidt norm $\|\cdot\|_{2}$, even for not necessarily bounded self-adjoint operators $A,B$. As a result, for $V_{j},V_{k}\in\hils$, the map $(\vec{t},s,p)\in\Pi_{1}\mapsto\Gamma(\vec{t},s,p)\in\boh$ is continuous in the trace class norm $\|\cdot\|_{1}$. Hence, the Bochner integral \eqref{MOI-Crl-R5} converges in the trace class norm $\|\cdot\|_{1}$, and the estimate \eqref{MOI-Crl-R2} follows. This completes the proof.
\end{proof}

The subsequent theorem outlines a formula for perturbation.

\begin{thm}\label{MOI-Thm2}
Let $n\in\N$ with $n\ge2$, and let $f\in\mathcal{W}_{1}(\Rn)$. Let $H_{1},\ldots,H_{n},A,B$ be (not necessarily bounded) self-adjoint operators in $\hil$ such that $A-B\in\bh$. Then, for every $1\le j\le n$,
\begin{align}
\nonumber&T_{f}^{H_{1},\ldots,H_{j-1},A,H_{j+1},\ldots,H_{n}}(I,\ldots,I)-T_{f}^{H_{1},\ldots,H_{j-1},B,H_{j+1},\ldots,H_{n}}(I,\ldots,I)\\
\label{MOI-Pert}&=T_{f_{j}^{[1]}}^{H_{1},\ldots,H_{j-1},A,B,H_{j+1},\ldots,H_{n}}(I,\ldots,I,\underbrace{A-B}_{j},I,\ldots,I).
\end{align}
\end{thm}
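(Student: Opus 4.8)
The plan is to reduce the identity \eqref{MOI-Pert} to the one‑dimensional (single‑variable) perturbation identity for the exponentials $e^{itA}$ and $e^{itB}$, and then integrate against $\widehat f$. Recall the elementary operator identity, valid for self‑adjoint $A,B$ with $A-B\in\bh$ and every $t\in\R$:
\begin{align*}
e^{itA}-e^{itB}=i\int_{0}^{t}e^{i(t-s)A}(A-B)e^{isB}\,ds .
\end{align*}
This is just the fundamental theorem of calculus applied to $s\mapsto e^{i(t-s)B}e^{isA}$ (the derivative being $ie^{i(t-s)B}(A-B)e^{isA}$), followed by a relabelling; the integral converges in the strong operator topology since $A-B$ is bounded and the one‑parameter groups are strongly continuous.

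\textbf{Key steps.} First I would write both sides of \eqref{MOI-Pert} as Bochner integrals over $\R^n$ against $d\nu(\vec t)=(2\pi)^{-n/2}\widehat f(\vec t)\,d\vec t$, using the Fourier representation \eqref{Fourier-Rep}: by Definition \ref{MOI-Def},
\begin{align*}
T_{f}^{H_{1},\ldots,A,\ldots,H_{n}}(I,\ldots,I)=\frac{1}{(2\pi)^{n/2}}\int_{\Rn}\widehat f(\vec t)\Big(\prod_{l=1}^{j-1}e^{it_lH_l}\Big)e^{it_jA}\Big(\prod_{l=j+1}^{n}e^{it_lH_l}\Big)\,d\vec t,
\end{align*}
and similarly with $B$ in place of $A$. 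Subtracting, the only factor that changes is the $j$‑th one, so the integrand becomes $\widehat f(\vec t)\,(\prod_{l<j}e^{it_lH_l})\,(e^{it_jA}-e^{it_jB})\,(\prod_{l>j}e^{it_lH_l})$. Second, I substitute the exponential identity above for $e^{it_jA}-e^{it_jB}$, producing an inner integral $i\int_0^{t_j}e^{i(t_j-s)A}(A-B)e^{isB}\,ds$. Third, I compare the resulting double integral with the formula for $T_{f_j^{[1]}}^{H_1,\ldots,A,B,\ldots,H_n}(I,\ldots,A-B,\ldots,I)$ given in Lemma \ref{MOI-Presentation}\,\eqref{MOI-1}: plugging $V_l=I$ for $l\ne j$ and $V_j=A-B$ into \eqref{MOI-1}, with the $j$‑th and $(j+1)$‑th operator slots occupied by $A$ and $B$ respectively, yields exactly
\begin{align*}
\frac{i}{(2\pi)^{n/2}}\int_{\Rn}\int_0^{t_j}\widehat f(\vec t)\Big(\prod_{l<j}e^{it_lH_l}\Big)e^{i(t_j-s)A}(A-B)e^{isB}\Big(\prod_{l>j}e^{it_lH_l}\Big)\,ds\,d\vec t,
\end{align*}
which matches term for term. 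A Fubini argument justifies interchanging the $\vec t$‑ and $s$‑integrations, and the independence of the MOI on the chosen representation (cited after Definition \ref{MOI-Def}) guarantees that using the Fourier representation entails no loss of generality since $f\in\mathcal W_1(\Rn)$ and hence $f_j^{[1]}\in\mathfrak G(\R^{n+1})$ by Lemma \ref{Div-Lem}\,\eqref{Div-Lem-R1}.

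\textbf{Main obstacle.} The routine part is the algebra of matching integrands; the genuine point requiring care is the convergence and interchange of integrals when the $H_l$ (and $A,B$) are unbounded. One must check that $e^{i(t_j-s)A}(A-B)e^{isB}$ is $\|\cdot\|$‑bounded uniformly in $s\in[0,t_j]$ (it is, with bound $\|A-B\|$) and strongly measurable, so that the inner Bochner integral exists and the outer integral against the $L^1$ function $t_j\widehat f$ (note $\widehat{\partial f/\partial\lambda_j}(\vec t)=it_j\widehat f(\vec t)\in L^1$ because $f\in\mathcal W_1$) converges absolutely; then Fubini for Bochner integrals applies. I expect this bookkeeping, rather than any conceptual difficulty, to be where the proof spends its effort, and it is handled exactly as in the strong‑continuity arguments already used in the proof of Theorem \ref{MOI-Thm1}.
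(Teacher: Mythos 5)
Your proposal is correct and follows essentially the same route as the paper: write both operator functions via the Fourier representation so only the $j$-th exponential factor differs, apply Duhamel's formula to $e^{it_jA}-e^{it_jB}$, and recognize the resulting double integral as the expression \eqref{MOI-1} for $T_{f_j^{[1]}}$ with $V_j=A-B$ and identities elsewhere. The paper simply cites Duhamel's formula from the literature rather than rederiving it, and your added remarks on measurability and Fubini are the same routine bookkeeping implicit there.
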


\begin{proof}	
We prove \eqref{MOI-Pert} only for $j=1$, as the argument for an arbitrary $j$ is analogous. Observe that,
\begin{align}
\nonumber&T_{f}^{A,H_{2},\ldots,H_{n}}(I,\ldots,I)-T_{f}^{B,H_{2},\ldots,H_{n}}(I,\ldots,I)=\int_{\Rn}\frac{\widehat{f}(\vec{t})}{(2\pi)^{n/2}}\left(e^{it_{1}A}-e^{it_{1}B}\right)e^{it_{2}H_{2}}\cdots e^{it_{n}H_{n}}\,d\vec{t},
\end{align}
which due to Duhamel's formula (see, e.g., \cite[Lemma 5.2]{AzDo09}) further reduces to
\begin{align}
\label{MOI-Thm2-R1}i\int_{\Rn}\int_{0}^{t_{1}}\frac{\widehat{f}(\vec{t})}{(2\pi)^{n/2}}\,e^{i(t_{1}-s)A}(A-B)e^{isB}e^{it_{2}H_{2}}\cdots e^{it_{n}H_{n}}\,dsd\vec{t}.
\end{align}
Finally, using \eqref{MOI-1}, the last expression \eqref{MOI-Thm2-R1} becomes
$$T_{f_{1}^{[1]}}^{A,B,H_{2},\ldots,H_{n}}(A-B,I,\ldots,I).$$
This completes the proof of the theorem.
\end{proof}

Finally, we conclude this section with the following needful notations, which will be used in the subsequent sections.

\begin{Notation}\label{Notation}	
We denote by $\N,\Z,\R$, and $\C$ the sets of natural, integer, real, and complex numbers, respectively. Throughout this paper, we assume $n\in\N$ with $n\ge2$. As recalled in the introduction, $\hil$ denotes a complex separable Hilbert space, $\bh$ the algebra of all bounded linear operators on $\hil$, $\mathcal{B}_{n}(\hil)$ the $n$-th Schatten-von Neumann ideal of compact operators on $\hil$, and $\Tr$ the standard trace on $\boh$.

We also adopt the following notations:
\begin{enumerate}[{\normalfont(i)}]
\item Set $\Hop:=(H_1,\ldots,H_n)$, $\V:=(V_1,\ldots,V_n)$, and define for $t\in[0,1]$ \begin{align}\label{Hnt&Lnt}
\Hop(t):=(H_1+tV_1,\ldots,H_n+tV_n),
\end{align}
where $H_j$ and $V_j$ ($1\le j\le n$) are linear operators in $\hil$.
\item We write $\comn$ for the set of $n$-tuple of pairwise commuting elements, i.e., $(T_{1},\ldots,T_{n})\in\comn$ satisfies $T_{i}T_{j}=T_{j}T_{i}$ for all $1\le i,j\le n.$
\item We denote the coordinates $(t_1,\ldots,t_n)$ by $\vec{t}$, the product measure $dt_1dt_2\cdots dt_n$ by $d\vec{t}$, and the multi-index $(k_1,\ldots, k_n)$ by $\vec{k}$.
\item For $f\in L^{1}(\Rn)$, $\widehat{f}$ denotes its Fourier transform.
\item For any bounded trace $\Tril$ on $\il$, we denote the components of the Jordan decomposition \eqref{Split-Pos-Trace} as $\tau_{k,\il}$ ($1\le k\le 4$) and set
\begin{align}\label{Trace-Sum}
\Wtril:=\tau_{1,\il}+\tau_{2,\il}+\tau_{3,\il}+\tau_{4,\il}.	
\end{align}
\end{enumerate}
\end{Notation}

\section{Krein trace formula for self-adjoint operators}\label{Sec3}
The aim of this section is twofold: first, to prove the Krein trace formula for a tuple of bounded, commuting, self-adjoint operators with perturbations allowed to be in the normed ideals $\il$ of $\bh$; second, to establish the same for a tuple of unbounded, commuting, self-adjoint operators with perturbations allowed to be in $\boh$. Note that proving the result for unbounded operators with $\boh$ perturbations requires a more careful analysis compared to the case of bounded operators with ideal perturbations.

Throughout this section we assume Hypothesis \ref{Assumption}(a), that is, $\il$ is a symmetrically normed ideal of $\bh$ with ideal norm $\|\cdot\|_{\il}$, endowed with a trace $\Tril:\il\to\C$ that is $\|\cdot\|_{\il}$-bounded.

\medskip

The following lemma can be seen as a generalization of \cite[Lemma 5.1]{AzDo09} to multivariable functions.

\begin{lma}\label{Kr-Lem1}
Assume Notation \ref{Notation}. Let $\Hop\in\comn$ be an $n$-tuple of self-adjoint (possibly unbounded) operators in $\hil$. If $f\in C(\Rn)\cap L^{1}(\Rn)$ with $\widehat{f}\in L^{1}(\Rn)$, then
\begin{align}\label{Kr-Lem1-R1}
f(\Hop)=T_{f}^{H_{1},\ldots,H_{n}}(I,\ldots,I).
\end{align}
\end{lma}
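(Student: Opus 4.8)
The statement asserts that for an $n$-tuple $\Hop=(H_1,\ldots,H_n)$ of commuting self-adjoint operators and a function $f$ with $f,\widehat f\in L^1(\Rn)$ and $f$ continuous, the operator $f(\Hop)$ defined via the joint spectral measure coincides with the multiple operator integral $T_f^{H_1,\ldots,H_n}(I,\ldots,I)$. The natural approach is to use the Fourier representation \eqref{Fourier-Rep}: since $f\in C(\Rn)\cap L^1(\Rn)$ with $\widehat f\in L^1(\Rn)$, the Fourier inversion formula gives
\[
f(\lambda_1,\ldots,\lambda_n)=\frac{1}{(2\pi)^{n/2}}\int_{\Rn}\widehat f(\vec t)\,e^{it_1\lambda_1}\cdots e^{it_n\lambda_n}\,d\vec t,
\]
which is exactly the representation \eqref{Function-Rep} with $\Omega=\Rn$, $a_k(\lambda_k,\vec t)=e^{it_k\lambda_k}$, and $d\nu(\vec t)=(2\pi)^{-n/2}\widehat f(\vec t)\,d\vec t$; moreover \eqref{Function-Bound} holds since $\widehat f\in L^1(\Rn)$.

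First I would apply the joint functional calculus for the commuting tuple $\Hop$ to this Fourier representation. Letting $E$ denote the joint spectral measure of $\Hop$ on $\Rn$, one has $f(\Hop)=\int_{\Rn}f(\vec\lambda)\,dE(\vec\lambda)$. Substituting the Fourier representation of $f$ and interchanging the (Bochner) integral over $\vec t$ with the spectral integral — justified by $\widehat f\in L^1$, Fubini for Bochner integrals, and the fact that $\|e^{it_1\lambda_1}\cdots e^{it_n\lambda_n}\|_{L^\infty}=1$ so the integrand is dominated by $(2\pi)^{-n/2}|\widehat f(\vec t)|$ — gives
\[
f(\Hop)=\frac{1}{(2\pi)^{n/2}}\int_{\Rn}\widehat f(\vec t)\Big(\int_{\Rn}e^{it_1\lambda_1}\cdots e^{it_n\lambda_n}\,dE(\vec\lambda)\Big)d\vec t.
\]
Then I would use commutativity crucially: because $H_1,\ldots,H_n$ commute, their joint spectral measure factors through the individual spectral measures $E_j$ of $H_j$, so $\int_{\Rn}e^{it_1\lambda_1}\cdots e^{it_n\lambda_n}\,dE(\vec\lambda)=e^{it_1H_1}e^{it_2H_2}\cdots e^{it_nH_n}$, the product of the individual unitary groups (which also commute). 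Substituting this back yields precisely
\[
f(\Hop)=\frac{1}{(2\pi)^{n/2}}\int_{\Rn}\widehat f(\vec t)\,e^{it_1H_1}\cdots e^{it_nH_n}\,d\vec t=T_f^{H_1,\ldots,H_n}(I,\ldots,I),
\]
the last equality being the definition (Definition \ref{MOI-Def}) of the MOI on the arguments $(I,\ldots,I)$ with the representation \eqref{Fourier-Rep}. Independence of the value on the particular representation is guaranteed by the remark after Definition \ref{MOI-Def}.

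The main obstacle — really the only nontrivial point — is the interchange of the spectral integral with the Bochner integral over $\vec t$, and the factorization of the joint spectral integral of the exponential into the product of one-parameter unitary groups. Both are standard for commuting self-adjoint operators (the one-variable case is exactly \cite[Lemma 5.1]{AzDo09}), but one must be a little careful since the $H_j$ may be unbounded: the point is that each $e^{it_jH_j}$ is a bounded (indeed unitary) operator, the map $\vec t\mapsto e^{it_1H_1}\cdots e^{it_nH_n}$ is strongly continuous, and $\widehat f\in L^1$ makes the Bochner integral converge in the strong operator topology; the interchange is then justified by testing against vectors $\xi,\eta\in\hil$ and applying scalar Fubini to $\langle f(\Hop)\xi,\eta\rangle$ against the finite measure $\vec\lambda\mapsto\langle E(\cdot)\xi,\eta\rangle$. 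I would present this vector-wise reduction explicitly and otherwise keep the argument brief, as the remaining manipulations are routine.
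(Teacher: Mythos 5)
Your proposal is correct and follows essentially the same route as the paper's proof: both use the Fourier representation \eqref{Fourier-Rep}, the joint spectral theorem for the commuting tuple, and a Fubini interchange carried out at the level of the sesquilinear forms $\la\,\cdot\,h,k\ra$ (with the factorization of the joint spectral integral of the exponential into the product $e^{it_1H_1}\cdots e^{it_nH_n}$ appearing implicitly in the paper and explicitly in your write-up). No gaps.
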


\begin{proof}
Theorem \ref{MOI-Thm1} and the assumptions on $f$ ensure that $T_{f}^{H_{1},\ldots,H_{n}}(I,\ldots,I)$ is a bounded linear operator. Now, the representation \eqref{Fourier-Rep} together with the joint spectral theorem (see \cite[Theorem 6.5.1 and Section 6.6.2]{BiSo87}, \cite{Patel}) for $\Hop\in\comn$ implies that, for all $h,k\in\hil$,
\begin{align}
\nonumber\left\la f(\Hop)h\,,\,k\right\ra&=\int_{\Rn}f(\lambda_{1},\ldots,\lambda_{n})d\left\la E(\lambda_{1},\ldots,\lambda_{n})h\,,\,k\right\ra\\
\nonumber&=\int_{\Rn}\left(\int_{\Rn}a_{1}(\lambda_{1},\vec{t})\ldots a_{n}(\lambda_{n},\vec{t})\,d\nu(\vec{t})\right)d\left\la E(\lambda_{1},\ldots,\lambda_{n})h\,,\,k\right\ra,
\end{align}
where the functions $a_{1},\ldots,a_{n}:\R\times\Rn\to\C$ and the measure $d\nu(\vec{t})$ is given by \eqref{Fourier-Rep}. By Fubini's theorem we further obtain
\begin{align}
\nonumber\left\la f(\Hop)h\,,\,k\right\ra&=\int_{\Rn}\left(\int_{\Rn}a_{1}(\lambda_{1},\vec{t})\ldots a_{n}(\lambda_{n},\vec{t})\,d\left\la E(\lambda_{1},\ldots,\lambda_{n})h\,,\,k\right\ra\right)d\nu(\vec{t})\\
\nonumber&=\left\la T_{f}^{H_{1},\ldots,H_{n}}(I,\ldots,I)h\,,\,k\right\ra,~\text{for all}~h,k\in\hil.
\end{align}
This proves \eqref{Kr-Lem1-R1}.
\end{proof}

Below, we express the first order derivative of $t\mapsto f(\Hop(t))$ in terms of MOI. Recall the notation $\Hop(t)$ from \eqref{Hnt&Lnt}.

\begin{lma}\label{Kr-Lem2}
Let $H_{j}\in\bh$ and $V_{j}\in\il$ be self-adjoint operators for $j=1,\ldots,n$. Let $f\in\Wn$ and $\Hop(t)\in\comn$ for all $t\in[0,1]$. Then the following assertions hold.
\begin{enumerate}[{\normalfont(i)}]
\item\label{Kr-Lem2-1} For all $t_{0}\in[0, 1]$, the derivative $\frac{d}{dt}\big|_{t=t_{0}}^{}f(\Hop(t))$ exists in the ideal norm $\|\cdot\|_{\il}$, and
\begin{align*}
&\frac{d}{dt}\bigg|_{t=t_{0}}^{}f(\Hop(t))=\sum_{k=1}^{n}T_{f_{k}^{[1]}}^{H_{1}+t_{0}V_{1},\ldots,H_{k}+t_{0}V_{k},H_{k}+t_{0}V_{k},\ldots,H_{n}+t_{0}V_{n}}(I,\ldots,I,\underbrace{V_{k}}_{k},I,\ldots,I).
\end{align*}
\item\label{Kr-Lem2-2} If $\il=\boh$, then \eqref{Kr-Lem2-1} holds even when $H_{1},\ldots,H_{n}$ are not necessarily bounded self-adjoint operators.
\end{enumerate}
\end{lma}

\begin{proof}
Take $H_{j}\in\bh$ and $V_{j}\in\il$ for $j=1,\ldots,n$. By Lemma \ref{Kr-Lem1} and Theorem \ref{MOI-Thm2} we derive,		
\begin{align}
\nonumber&\frac{f(\Hop(t))-f(\Hop(t_{0}))}{t-t_{0}}\\
\nonumber&=\sum_{k=1}^{n}T_{f_{k}^{[1]}}^{H_{1}+t_{0}V_{1},\ldots,H_{k-1}+t_{0}V_{k-1},H_{k}+tV_{k},H_{k}+t_{0}V_{k},H_{k+1}+tV_{k+1},\ldots,H_{n}+tV_{n}}(I,\ldots,I,\underbrace{V_{k}}_{k},I,\ldots,I)\\
\nonumber&=\sum_{k=1}^{n}\psi_{k}(t)~\text{(say)}.
\end{align}	
From \eqref{MOI-1} we have
\begin{align}
\nonumber&\psi_{k}(t)=\\
\label{Kr-Lem2-R4}&i\int_{\Rn}\int_{0}^{t_{k}}\frac{\widehat{f}(\vec{t})}{(2\pi)^{n/2}}\bigg[\prod_{l=1}^{k-1}e^{it_{l}(H_{l}+t_{0}V_{l})}\bigg]e^{i(t_{k}-s)(H_{k}+tV_{k})}V_{k}e^{is(H_{k}+t_{0}V_{k})}\bigg[\prod_{l=k+1}^{n}e^{it_{l}(H_{l}+tV_{l})}\bigg]dsd\vec{t}.
\end{align}
Denote the integrand by $\Gamma(\vec{t},s,t)$. Observe that
\begin{align}
\label{Kr-Lem2-R3}\lim_{t\to t_{0}}\left\|e^{i(t_{k}-s)(H_{k}+tV_{k})}V_{k}e^{is(H_{k}+t_{0}V_{k})}- e^{i(t_{k}-s)(H_{k}+t_{0}V_{k})}V_{k}e^{is(H_{k}+t_{0}V_{k})}\right\|_{\il}=0,
\end{align} 
and for $k+1\le l\le n$,
\begin{align}
\label{Kr-Lem2-R5}\lim_{t\to t_{0}}\left\|e^{it_{l}(H_{l}+tV_{l})}-e^{it_{l}(H_{l}+t_{0}V_{l})}\right\|=0,
\end{align}
both following from Duhamel's formula. Consequently,
\begin{align}
\label{Kr-Lem2-R6}\lim_{t\to t_{0}}\left\|\Gamma(\vec{t},s,t)-\Gamma(\vec{t},s,t_{0})\right\|_{\il}=0.
\end{align}
Applying the Lebesgue dominated convergence theorem for Bochner integrals to \eqref{Kr-Lem2-R4} gives
\begin{align}
\nonumber&\lim_{t\to t_{0}}\psi_{k}(t)=T_{f_{k}^{[1]}}^{H_{1}+t_{0}V_{1},\ldots,H_{k-1}+t_{0}V_{k-1},H_{k}+t_{0}V_{k},H_{k}+t_{0}V_{k},H_{k+1}+t_{0}V_{k+1},\ldots,H_{n}+t_{0}V_{n}}(I,\ldots,\underbrace{V_{k}}_{k},\ldots,I),
\end{align}
for all $1\le k\le n$, where the limit is taken in the ideal  norm $\|\cdot\|_{\il}$. This establishes \eqref{Kr-Lem2-1}. 
	
\medskip
	
In particular, if $\il=\boh$, then for not necessarily bounded self-adjoint operators $H_{j}$ ($1\le j\le n$), the convergence in \eqref{Kr-Lem2-R5} does not necessarily hold in the operator norm, but it does hold in the strong operator topology. However, since $V_{k}\in\boh$, the convergence in \eqref{Kr-Lem2-R3} and \eqref{Kr-Lem2-R6} hold in the trace class norm $\|\cdot\|_{1}$. Therefore, by the same method as above, we obtain
$$\lim_{t\to t_{0}}\left\|\psi_{k}(t)-\psi_{k}(t_{0})\right\|_{1}=0.$$ 
This establishes \eqref{Kr-Lem2-2}, thereby completing the proof of the lemma.
\end{proof}

The proof of the existence of the spectral shift measures and the associated trace formula relies on the following two key lemmas.

\begin{lma}\label{Kr-Lem3}
Let $H_{j}\in\bh$ and $V_{j}\in\il$ be self-adjoint operators for $j=1,\ldots,n$. Let $f\in\Wn$ and $\Hop(t)\in\comn$ for all $t\in[0,1]$. Then for all $t_{0}\in[0,1],$
\begin{align}
\label{Kr-Lem3-R1}\Tril\left(\frac{d}{dt}\bigg|_{t=t_{0}}^{}f(\Hop(t))\right)=\frac{d}{dt}\bigg|_{t=t_{0}}^{}\Tril\left(f(\Hop(t))-f(\Hop)\right).
\end{align}
In particular, if $\il=\boh$, then $H_{j}$ can be chosen as not necessarily bounded self-adjoint operators.
\end{lma}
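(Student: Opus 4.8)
The plan is to establish \eqref{Kr-Lem3-R1} by showing that the difference quotient for $t\mapsto\Tril(f(\Hop(t))-f(\Hop))$ converges to $\Tril\left(\frac{d}{dt}\big|_{t=t_0}f(\Hop(t))\right)$, and the key point is that we may interchange the (bounded) linear functional $\Tril$ with the $\|\cdot\|_{\il}$-limit defining the derivative. First I would write, for $t\ne t_0$,
\[
\frac{\Tril\left(f(\Hop(t))-f(\Hop)\right)-\Tril\left(f(\Hop(t_0))-f(\Hop)\right)}{t-t_0}=\Tril\left(\frac{f(\Hop(t))-f(\Hop(t_0))}{t-t_0}\right),
\]
using linearity of $\Tril$ and cancellation of the $\Tril(f(\Hop))$ terms (note $f(\Hop)\in\il$, indeed all the relevant operators differ from each other by elements of $\il$, so each term is in the domain of $\Tril$; this is guaranteed by Lemma \ref{Kr-Lem2} together with Lemma \ref{Kr-Lem1}).

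Next I would invoke Lemma \ref{Kr-Lem2}\eqref{Kr-Lem2-1}, which asserts precisely that $\frac{f(\Hop(t))-f(\Hop(t_0))}{t-t_0}$ converges in $\|\cdot\|_{\il}$ as $t\to t_0$ to $\frac{d}{dt}\big|_{t=t_0}f(\Hop(t))$, given by the MOI expression \eqref{Kr-Lem2-R1}. Since $\Tril$ is $\|\cdot\|_{\il}$-bounded by Assumption \ref{Assumption}(a), it is continuous with respect to $\|\cdot\|_{\il}$, hence
\[
\lim_{t\to t_0}\Tril\left(\frac{f(\Hop(t))-f(\Hop(t_0))}{t-t_0}\right)=\Tril\left(\lim_{t\to t_0}\frac{f(\Hop(t))-f(\Hop(t_0))}{t-t_0}\right)=\Tril\left(\frac{d}{dt}\bigg|_{t=t_0}f(\Hop(t))\right).
\]
Combining the two displays shows that the right-hand side of \eqref{Kr-Lem3-R1} exists and equals the left-hand side.

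For the unbounded case with $\il=\boh$, $\Tril=\Tr$, the same argument applies verbatim once we replace the appeal to Lemma \ref{Kr-Lem2}\eqref{Kr-Lem2-1} by Lemma \ref{Kr-Lem2}\eqref{Kr-Lem2-2}, which provides convergence of the difference quotient in $\|\cdot\|_1$; the standard trace is $\|\cdot\|_1$-bounded, so continuity again lets us pass the limit through. I do not expect a serious obstacle here: the lemma is essentially a continuity-of-a-bounded-functional statement, and all the analytic content (existence of the $\|\cdot\|_{\il}$-derivative, membership of the operators in $\il$) has already been extracted in Lemmas \ref{Kr-Lem1} and \ref{Kr-Lem2}. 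The only point requiring a line of care is verifying that $f(\Hop(t))-f(\Hop)\in\il$ for each $t$ so that the left-hand side of the first display is well-defined — this follows by applying Theorem \ref{MOI-Thm2} (or telescoping over the coordinates as in the proof of Lemma \ref{Kr-Lem2}) to write $f(\Hop(t))-f(\Hop)$ as a finite sum of MOIs of the form $T_{f_k^{[1]}}^{\cdots}(I,\dots,tV_k,\dots,I)$, each of which lies in $\il$ by Corollary \ref{MOI-Crl}.
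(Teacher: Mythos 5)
Your proposal is correct and follows essentially the same route as the paper: show the differences $f(\Hop(t))-f(\Hop(t_{0}))$ lie in $\il$ (via Theorem \ref{MOI-Thm2} and Corollary \ref{MOI-Crl}), then combine the $\|\cdot\|_{\il}$-boundedness of $\Tril$ with the $\|\cdot\|_{\il}$-convergence of the difference quotient from Lemma \ref{Kr-Lem2} (and its part \eqref{Kr-Lem2-2} for the case $\il=\boh$) to interchange trace and limit. One cosmetic point: the parenthetical claim that $f(\Hop)\in\il$ is neither needed nor true in general; the justification you actually use — that all relevant operators differ by elements of $\il$, so the $\Tril(f(\Hop))$ terms cancel via linearity on $\il$ — is the correct one.
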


\begin{proof}
Note that Theorem \ref{MOI-Thm2}, \eqref{MOI-Crl-R1}, and the assumption on $f$ ensure that $f(\Hop(t))-f(\Hop(t_{0}))\in\il$ for all $t,t_{0}\in[0,1]$. Recall from Hypothesis \ref{Assumption}(a) that the trace $\Tril$ is $\|\cdot\|_{\il}$-bounded. Therefore, by the $\|\cdot\|_{\il}$-boundedness of $\Tril$ and Lemma \ref{Kr-Lem2}, we establish \eqref{Kr-Lem3-R1}.
\end{proof}

\begin{lma}\label{Kr-Lem4}
Let $n, N\in\N$. Let $\{\delta_{i}\}_{i=1}^{N}$ be a partition of $\C^{n}$, and let $E(\cdot)$ be a spectral measure on $\C^{n}$. Let $\Tril$ be a $\|\cdot\|_{\il}$-bounded trace on $\il$ with the Jordan decomposition given by $\Tril=\tau_{1,\il}-\tau_{2,\il}+i\tau_{3,\il}-i\tau_{4,\il}$, where $\tau_{k,\il}$ $(1\le k\le 4)$ are the positive components of $\Tril$. If $V\in\il,$ then
$$\sum_{i=1}^{N}\left|\Tril(E(\delta_{i})V)\right|\le\Wtril(|\Re(V)|+|\Im(V)|),$$
where $\Wtril=\tau_{1,\il}+\tau_{2,\il}+\tau_{3,\il}+\tau_{4,\il}$.
\end{lma}
	
\begin{proof}
The proof follows from \cite[Lemma 3.9]{DySk14}.
\end{proof}

We adhere to the following notation:
\[\Wtril=\tau_{1,\il}+\tau_{2,\il}+\tau_{3,\il}+\tau_{4,\il},\]
where $\tau_{k,\il}\,(1\le k\le 4)$ are the positive components of the Jordan decomposition of $\Tril$.

The following theorem shows that the trace of the derivative of a multivariable operator function can be written via partial derivatives of $f$.

\begin{thm}\label{Kr-Thm1}
Assume Notation \ref{Notation}. Let $H_{j}\in\bh$ and $V_{j}\in\il$ be self-adjoint operators for $j=1,\ldots,n$. Let $\Tril$ be a $\|\cdot\|_{\il}$-bounded trace on $\il$ with the Jordan decomposition given by $\Tril=\tau_{1,\il}-\tau_{2,\il}+i\tau_{3,\il}-i\tau_{4,\il}$, where $\tau_{k,\il}$ $(1\le k\le 4)$ are the positive components of $\Tril$. Let $f\in\Wn$ and $\Hop(t)\in\comn$ for all $t\in[0,1]$. Then for all $t_{0}\in[0,1],$
\begin{align}
\label{Kr-Thm1-R1}&\Tril\left(\frac{d}{dt}\bigg|_{t=t_{0}}^{}f(\Hop(t))\right)=\sum_{j=1}^{n}\Tril\left(\frac{\partial f}{\partial \lambda_{j}}(\Hop(t_{0}))V_{j}\right),
\end{align}
and
\begin{align}
\label{Kr-Thm1-R2}&\left|\Tril\left(\frac{\partial f}{\partial \lambda_{j}}(\Hop(t_{0}))V_{j}\right)\right|\le\Wtril\left(|V_{j}|\right)\left\|\frac{\partial f}{\partial \lambda_{j}}\right\|_{L^{\infty}(\Rn)}.
\end{align}
In particular, if $\il=\boh$, then $H_{j}~(1\le j\le n)$ can be chosen as not necessarily bounded self-adjoint operators.
\end{thm}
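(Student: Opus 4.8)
The plan is to establish \eqref{Kr-Thm1-R1} first and then derive the pointwise bound \eqref{Kr-Thm1-R2} from it. For \eqref{Kr-Thm1-R1}, I would combine Lemma \ref{Kr-Lem2} with the linearity and $\|\cdot\|_{\il}$-boundedness of $\Tril$: applying $\Tril$ termwise to \eqref{Kr-Lem2-R1} gives
\[
\Tril\left(\frac{d}{dt}\bigg|_{t=t_{0}}^{}f(\Hop(t))\right)=\sum_{k=1}^{n}\Tril\left(T_{f_{k}^{[1]}}^{H_{1}(t_{0}),\ldots,H_{k}(t_{0}),H_{k}(t_{0}),\ldots,H_{n}(t_{0})}(I,\ldots,V_{k},\ldots,I)\right),
\]
writing $H_j(t_0):=H_j+t_0V_j$. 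The key reduction is then that each summand equals $\Tril\left(\frac{\partial f}{\partial\lambda_k}(\Hop(t_0))V_k\right)$. The point is that since all the operators $H_j(t_0)$ pairwise commute (this is exactly the hypothesis $\Hop(t)\in\comn$), the multiple operator integral $T_{f_k^{[1]}}^{\ldots}(I,\ldots,V_k,\ldots,I)$ collapses: using the representation \eqref{MOI-1} together with the joint spectral theorem for the commuting tuple $\Hop(t_0)$, the factors $e^{i(t_k-s)H_k(t_0)}$ and $e^{isH_k(t_0)}$ recombine, the inner $s$-integral over $[0,t_k]$ just multiplies by $t_k$ (up to the factor $i$, which cancels against $it_k\widehat f$ being $\widehat{\partial f/\partial\lambda_k}$), and everything reduces to $\int_{\Rn}\frac{1}{(2\pi)^{n/2}}\widehat{\partial_{\lambda_k}f}(\vec t)\,e^{it_1H_1(t_0)}\cdots V_k\cdots e^{it_nH_n(t_0)}\,d\vec t$, which by Lemma \ref{Kr-Lem1} applied to $\partial f/\partial\lambda_k\in C(\Rn)\cap L^1(\Rn)$ (a member of $\mathcal W_0(\Rn)$-type regularity guaranteed since $f\in\Wn$) and the trace property $\Tril(AB)=\Tril(BA)$ equals $\Tril\left(\frac{\partial f}{\partial\lambda_k}(\Hop(t_0))V_k\right)$. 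I would carry out this collapse either directly on the Bochner integral or, more cleanly, by noting that for commuting self-adjoint tuples the MOI $T_{g_k^{[1]}}^{\ldots}$ evaluated at $(I,\ldots,V_k,\ldots,I)$ with all middle operators equal agrees with $g^{[1],k}$-functional calculus, i.e.\ $(\partial_{\lambda_k}g)(\Hop(t_0))V_k$, which is a standard consequence of the finite-difference identity $g^{[1]}(\lambda,\lambda)=g'(\lambda)$.

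For the bound \eqref{Kr-Thm1-R2}, I would use the joint spectral measure $E(\cdot)$ of the commuting self-adjoint tuple $\Hop(t_0)$ on $\Rn$, so that $\frac{\partial f}{\partial\lambda_j}(\Hop(t_0))=\int_{\Rn}\frac{\partial f}{\partial\lambda_j}(\lambda)\,dE(\lambda)$, and approximate this integral by simple functions: take partitions $(\delta_i)_{i=1}^N$ of $\Rn$ (or a bounded box containing the joint spectrum, which is compact when the $H_j,V_j$ are bounded) with mesh tending to $0$, pick sample points $\lambda^{(i)}\in\delta_i$, and write
\[
\Tril\left(\frac{\partial f}{\partial\lambda_j}(\Hop(t_0))V_j\right)=\lim_{N}\sum_{i=1}^{N}\frac{\partial f}{\partial\lambda_j}(\lambda^{(i)})\,\Tril\big(E(\delta_i)V_j\big),
\]
the limit holding in $\C$ because $\frac{\partial f}{\partial\lambda_j}$ is continuous (hence uniformly continuous on the relevant compact set), $E(\delta_i)V_j\to \frac{\partial f}{\partial\lambda_j}(\Hop(t_0))V_j$ weighted sums converge in $\|\cdot\|_\il$ by property (iii) of Definition \ref{Ideal-Def} and $\|E(\delta_i)\|\le1$, and $\Tril$ is $\|\cdot\|_\il$-bounded. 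Then
\[
\left|\Tril\left(\frac{\partial f}{\partial\lambda_j}(\Hop(t_0))V_j\right)\right|\le\left\|\frac{\partial f}{\partial\lambda_j}\right\|_{L^\infty(\Rn)}\sum_{i=1}^N\left|\Tril\big(E(\delta_i)V_j\big)\right|\le\left\|\frac{\partial f}{\partial\lambda_j}\right\|_{L^\infty(\Rn)}\Wtril\big(|\Re V_j|+|\Im V_j|\big),
\]
by Lemma \ref{Kr-Lem4}. Since $V_j$ is self-adjoint, $\Im V_j=0$ and $|\Re V_j|=|V_j|$, giving exactly \eqref{Kr-Thm1-R2}.

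Finally, for the unbounded case with $\il=\boh$: Lemma \ref{Kr-Lem2}\eqref{Kr-Lem2-2} already furnishes the derivative formula \eqref{Kr-Lem2-R1} in $\|\cdot\|_1$ for not-necessarily-bounded self-adjoint $H_j$, so the identity \eqref{Kr-Thm1-R1} goes through verbatim using continuity of $\Tr$ on $\boh$. For \eqref{Kr-Thm1-R2} the joint spectral measure of the commuting unbounded tuple still exists and is supported on $\Rn$; the only subtlety is that the joint spectrum may be unbounded, but since $\partial f/\partial\lambda_j$ is bounded on all of $\Rn$ and $V_j\in\boh$, one can exhaust $\Rn$ by bounded boxes $B_R$, apply the above argument on each, and pass to the limit using $\|E(\Rn\setminus B_R)V_j\|_1\to0$; the bound \eqref{Kr-Thm1-R2} is uniform in $R$, hence survives.

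I expect the main obstacle to be the rigorous justification of the ``collapse'' of the multiple operator integral $T_{f_k^{[1]}}^{H_k(t_0),H_k(t_0),\ldots}(I,\ldots,V_k,\ldots,I)$ to the functional-calculus expression $\frac{\partial f}{\partial\lambda_k}(\Hop(t_0))V_k$ when the operators coincide and commute: one must carefully interchange the Bochner integral over $\vec t$ (and the inner integral over $s$) with the joint spectral integration, which is where the integrability hypotheses in the definition of $\Wn$ and the strong continuity of the unitary groups are essential, and one must make sure the placement of $V_k$ among commuting exponentials genuinely produces $e^{it_k H_k(t_0)}$ before the trace is applied rather than only after invoking $\Tril(AB)=\Tril(BA)$. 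Everything else — linearity, the limiting argument in \eqref{Kr-Thm1-R2}, the reduction of $|\Re V_j|+|\Im V_j|$ to $|V_j|$ — is routine.
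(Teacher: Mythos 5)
Your proposal is essentially the paper's own argument: identity \eqref{Kr-Thm1-R1} is obtained by applying $\Tril$ to the representation from Lemma \ref{Kr-Lem2}/\eqref{MOI-1}, using cyclicity of the trace together with the pairwise commutativity of the $H_l(t_0)$ to recombine the exponentials (the $s$-integral then contributes the factor $t_j$, turning $it_j\widehat f$ into $\widehat{\partial f/\partial\lambda_j}$), and then invoking Lemma \ref{Kr-Lem1}; the bound \eqref{Kr-Thm1-R2} is obtained exactly as in the paper via the joint spectral measure, Riemann-type sums \eqref{Kr-Thm1-R4}, Lemma \ref{Kr-Lem4}, and self-adjointness of $V_j$, with the same remark for the unbounded trace-class case. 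One caution: your ``cleaner'' alternative, asserting the operator identity $T_{f_k^{[1]}}^{\ldots}(I,\ldots,V_k,\ldots,I)=\frac{\partial f}{\partial\lambda_k}(\Hop(t_0))V_k$ from $g^{[1]}(\lambda,\lambda)=g'(\lambda)$, is false unless $V_k$ commutes with $H_k(t_0)$ (the exponentials cannot recombine across $V_k$ at the operator level); the collapse is valid only after taking the trace, via $\Tril(AB)=\Tril(BA)$, which is precisely what your primary route and the paper do, so this does not affect the validity of your main argument.
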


\begin{proof}
Consider $H_{j}\in\bh$ and $V_{j}\in\il$ for $j=1,\ldots,n$. Lemma \ref{Kr-Lem2} and \eqref{MOI-1} yield
\begin{align}
\nonumber&\Tril\left(\frac{d}{dt}\bigg|_{t=t_{0}}^{}f(\Hop(t))\right)=\sum_{j=1}^{n}\frac{i}{(2\pi)^{n/2}}\left\{\int_{\Rn}\int_{0}^{t_{j}}\Tril\left(\Gamma_{j}(\vec{t},s,t_{0})\right)\widehat{f}(\vec{t})dsd\vec{t}\,\right\},
\end{align}
where 
\begin{align*}
\Gamma_{j}(\vec{t},s,t_{0})=\bigg(\prod_{l=1}^{j-1}e^{it_{l}(H_{l}+t_{0}V_{l})}\bigg)e^{i(t_{j}-s)(H_{j}+t_{0}V_{j})}V_{j}e^{is(H_{j}+t_{0}V_{j})}\bigg(\prod_{l=j+1}^{n}e^{it_{l}(H_{l}+t_{0}V_{l})}\bigg).
\end{align*}
Now the cyclicity of the trace $\Tril$ and the pairwise commutativity of $H_{1}(t_{0}),\ldots,H_{n}(t_{0})$ implies
\begin{align}
\nonumber\Tril\left(\frac{d}{dt}\bigg|_{t=t_{0}}^{}f(\Hop(t))\right)&=\frac{i}{(2\pi)^{n/2}}\left\{\sum_{j=1}^{n}\int_{\Rn}\int_{0}^{t_{j}}\Tril\left(\bigg(\prod_{l=1}^{n}e^{it_{l}\,(H_{l}+t_{0}V_{l})}\bigg)\,V_{j}\right)\,\widehat{f}(\vec{t})\,ds\,d\vec{t}\,\right\}\\
\nonumber&=\frac{1}{(2\pi)^{n/2}}\Tril \left\{\sum_{j=1}^{n}\int_{\Rn}\widehat{\frac{\partial f}{\partial \lambda_{j}}}(\vec{t})\,\bigg(\prod_{l=1}^{n}e^{it_{l}\,(H_{l}+t_{0}V_{l})}\bigg)\,d\vec{t}\,\,\,V_{j}\right\}.
\end{align}
Further, it follows from Lemma \ref{Kr-Lem1} that
$$\frac{1}{(2\pi)^{n/2}}\Tril\left\{\sum_{j=1}^{n}\int_{\Rn}\widehat{\frac{\partial f}{\partial \lambda_{j}}}(\vec{t})\,\bigg(\prod_{l=1}^{n}e^{it_{l}\,(H_{l}+t_{0}V_{l})}\bigg)\,d\vec{t}\,\,V_{j}\right\}=\sum_{j=1}^{n}\Tril\left(\frac{\partial f}{\partial \lambda_{j}}(\Hop(t_{0}))V_{j}\right).$$
This proves \eqref{Kr-Thm1-R1}.
	
Since $\Hop(t_{0})\in\comn$ is a tuple of bounded self-adjoint operators, the joint spectral theorem yields
\begin{align*}
&\frac{\partial f}{\partial \lambda_{j}}(\Hop(t_{0}))=\int_{\sigma(\Hop(t_{0}))}\frac{\partial f}{\partial \lambda_{j}}(\lambda_{1},\ldots,\lambda_{n})\,dE_{t_{0}}(\lambda_{1},\ldots,\lambda_{n}),
\end{align*}
where $E_{t_{0}}(\cdot)$ is the joint spectral measure of $\Hop(t_{0})$. Then there exists a sequence of Borel partition $(\delta_{m,k})_{1\leq k\leq m}$ of $\R^n$ and a sequence of $n$-tuple of real numbers $(\bm \lambda_{m,k})_{1\le{k}\le m}$ such that
\begin{align}
\label{Kr-Thm1-R4}\Tril\left(\frac{\partial f}{\partial \lambda_{j}}(\Hop(t_{0}))V_{j}\right) =\lim_{m\to\infty}\sum_{1\le k\le m}\frac{\partial f}{\partial \lambda_{j}}(\bm \lambda_{m,k})\,\Tril\left(E_{t_{0}}(\delta_{m,k})V_{j}\right).
\end{align}
Using the representation \eqref{Kr-Thm1-R4} along with Lemma \ref{Kr-Lem4}, we immediately obtain the following estimate:
\begin{align}
\nonumber&\left|\Tril\left(\frac{\partial f}{\partial \lambda_{j}}(\Hop(t_{0}))V_{j}\right)\right|\le\Wtril\left(|V_{j}|\right)\left\|\frac{\partial f}{\partial \lambda_{j}}\right\|_{L^{\infty}(\Rn)}.
\end{align}
This completes the proof for bounded $H_{j}$ $(1\le j\le n)$ with arbitrary ideal perturbations. The case of not necessarily bounded $H_{j}$ with a perturbation in the trace class ideal follows a similar argument. This concludes the proof of the theorem.
\end{proof}

The following corollary represents an analog of the fundamental theorem of calculus.

\begin{crl}\label{Kr-Cor1}
Let $H_{j}\in\bh$ and $V_{j}\in\il$ be self-adjoint operators for $j=1,\ldots,n$. Let $\Hop(t)\in\comn$ for all $t\in[0,1]$. Then for every $f\in\Wnn$,
\begin{align}
\label{Kr-Cor1-R1}&\Tril\big\{f(\Hop(1))-f(\Hop)\big\}=\int_{0}^{1}\frac{d}{dt}\Tril\left(f(\Hop(t))-f(\Hop)\right)dt=\int_{0}^{1}\Tril\left(\frac{d}{dt}f(\Hop(t))\right)dt.
\end{align}
In particular, if $\il=\boh$, then $H_{j}~(1\le j\le n)$ can be chosen as not necessarily bounded self-adjoint operators.
\end{crl}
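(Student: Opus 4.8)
The plan is to deduce Corollary~\ref{Kr-Cor1} from Theorem~\ref{Kr-Thm1} by combining it with the ordinary fundamental theorem of calculus applied to the scalar-valued function $t\mapsto \Tril\big(f(\Hop(t))-f(\Hop)\big)$ on $[0,1]$. The three quantities in \eqref{Kr-Cor1-R1} will be matched from right to left. First I would observe that by Lemma~\ref{Kr-Lem2} the derivative $\frac{d}{dt}f(\Hop(t))$ exists in $\|\cdot\|_{\il}$ for every $t\in[0,1]$ (with the unbounded-$H_j$ case covered when $\il=\boh$), and by Lemma~\ref{Kr-Lem3} the scalar function $g(t):=\Tril\big(f(\Hop(t))-f(\Hop)\big)$ is differentiable on $[0,1]$ with $g'(t)=\Tril\big(\frac{d}{dt}f(\Hop(t))\big)$; this is exactly the second equality in \eqref{Kr-Cor1-R1}.

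Next I would argue that $g$ is absolutely continuous, or at least that $g'$ is integrable and the fundamental theorem of calculus applies, so that $g(1)-g(0)=\int_0^1 g'(t)\,dt$. Since $g(0)=0$, this gives $\Tril\{f(\Hop(1))-f(\Hop)\}=\int_0^1 \frac{d}{dt}\Tril\big(f(\Hop(t))-f(\Hop)\big)\,dt$, the first equality. The integrability and continuity of $g'$ follow from the uniform estimate already in hand: by Theorem~\ref{Kr-Thm1}\eqref{Kr-Thm1-R1} we have $g'(t)=\sum_{j=1}^n \Tril\big(\frac{\partial f}{\partial\lambda_j}(\Hop(t))V_j\big)$, and by \eqref{Kr-Thm1-R2} each summand is bounded in modulus by $\Wtril(|V_j|)\,\|\partial f/\partial\lambda_j\|_{L^\infty(\Rn)}$ uniformly in $t\in[0,1]$ (note $f\in\Wnn\subseteq\Wn$, so $\partial f/\partial\lambda_j\in\Wn$ and in particular lies in $L^\infty$, so the right-hand side is finite). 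Continuity of $t\mapsto g'(t)$ then comes from the $\|\cdot\|_{\il}$-continuity of $t\mapsto \frac{\partial f}{\partial\lambda_j}(\Hop(t))V_j$ (established exactly as in the proof of Lemma~\ref{Kr-Lem2}, using strong continuity of $t\mapsto e^{i\lambda(H_k+tV_k)}$ and dominated convergence for the Bochner integral, together with the joint spectral representation \eqref{Kr-Thm1-R3}) and the $\|\cdot\|_{\il}$-boundedness of $\Tril$. A continuous $g'$ on the compact interval $[0,1]$ is bounded and integrable, and $g$ being a primitive of a continuous function, the classical fundamental theorem of calculus yields the claim.

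The main obstacle is justifying that $g$ is genuinely differentiable on the \emph{closed} interval with a \emph{continuous} (hence Riemann-integrable) derivative; once that is secured the rest is bookkeeping. The delicate point is the one-sided continuity of $t\mapsto\frac{\partial f}{\partial\lambda_j}(\Hop(t))V_j$ in $\|\cdot\|_{\il}$ at the endpoints $t_0=0$ and $t_0=1$, and, in the unbounded case $\il=\boh$, the fact that the relevant Bochner integrals converge in $\|\cdot\|_1$ rather than merely in operator norm; both were already handled in the proof of Lemma~\ref{Kr-Lem2}, so I would simply invoke that argument. With this, the chain of equalities in \eqref{Kr-Cor1-R1} is complete, and the ``in particular'' clause for not necessarily bounded $H_j$ follows because every lemma invoked (Lemma~\ref{Kr-Lem2}, Lemma~\ref{Kr-Lem3}, Theorem~\ref{Kr-Thm1}) already carries that extension when $\il=\boh$.
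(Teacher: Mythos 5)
Your proposal is correct, and its skeleton is the one the paper uses: identify the second equality in \eqref{Kr-Cor1-R1} via Lemma \ref{Kr-Lem2} and Lemma \ref{Kr-Lem3}, rewrite the derivative as $\sum_{j}\Tril\big(\tfrac{\partial f}{\partial\lambda_{j}}(\Hop(t))V_{j}\big)$ by Theorem \ref{Kr-Thm1}, and then reduce the first equality to showing that this scalar derivative is continuous on $[0,1]$ so that the classical fundamental theorem of calculus applies. The only genuine divergence is in how that continuity is established. The paper proves a quantitative Lipschitz-type estimate: it applies Theorem \ref{MOI-Thm2} to $g=\tfrac{\partial f}{\partial\lambda_{j}}$, expands $g(\Hop(t+\epsilon))-g(\Hop(t))$ as a telescoping sum of first-order operator integrals with perturbations $\epsilon V_{k}$, and bounds it through Corollary \ref{MOI-Crl}; this is exactly where the second-order Wiener data $\widehat{\tfrac{\partial^{2}f}{\partial\lambda_{k}\partial\lambda_{j}}}\in L^{1}(\Rn)$, i.e.\ the hypothesis $f\in\Wnn$, enters. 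You instead argue softly, representing $\tfrac{\partial f}{\partial\lambda_{j}}(\Hop(t))$ as a Bochner integral of products of exponentials and invoking dominated convergence together with norm continuity (bounded case) or strong continuity plus $V_{j}\in\boh$ (unbounded case) of $t\mapsto e^{i\lambda(H_{k}+tV_{k})}$, exactly as in the proof of Lemma \ref{Kr-Lem2}; this is valid, and it only uses $\widehat{\tfrac{\partial f}{\partial\lambda_{j}}}\in L^{1}(\Rn)$, so your route would in fact yield the corollary already for $f\in\Wn$, whereas the paper's quantitative bound \eqref{Kr-Cor1-R6} (which may be of independent use) needs $f\in\Wnn$. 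One small correction: the representation you need for the continuity step is Lemma \ref{Kr-Lem1} applied to the function $\tfrac{\partial f}{\partial\lambda_{j}}$ (legitimate since $\Hop(t)\in\comn$ and $\tfrac{\partial f}{\partial\lambda_{j}},\widehat{\tfrac{\partial f}{\partial\lambda_{j}}}\in L^{1}(\Rn)$), not the spectral-integral formula \eqref{Kr-Thm1-R3}, which by itself gives no information about dependence on $t$.
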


\begin{proof}
We provide a proof assuming $H_{j}\in\bh$ and $V_{j}\in\il$ for $j=1,\ldots,n$. The proof for not necessarily bounded $H_{j}$ with $V_{j}\in\boh$ is analogous. Note that the continuity of the function
\begin{align}
\label{Kr-Cor1-R2}&\psi:t\in[0,1]\mapsto\frac{d}{dt}\Tril\left(f(\Hop(t))-f(\Hop)\right)
\end{align}
for every $t\in[0, 1]$ suffices to establish \eqref{Kr-Cor1-R1}. Lemma \ref{Kr-Lem2} ensures that
$$\frac{d}{ds}\bigg|_{s=t}f(\Hop(s))\in\il,~\text{for all}~t\in[0,1].$$
Applying Lemma \ref{Kr-Lem3} and Theorem \ref{Kr-Thm1}, we obtain
\begin{align}
\nonumber\psi(t+\epsilon)-\psi(t)&=\Tril\left(\frac{d}{ds}\bigg|_{s=t+\epsilon}^{}f(\Hop(s))-\frac{d}{ds}\bigg|_{s=t}^{}f(\Hop(s))\right)\\
\label{Kr-Cor1-R3}&=\Tril\left(\sum_{j=1}^{n}\frac{\partial f}{\partial \lambda_{j}}(\Hop(t+\epsilon))V_{j}-\sum_{j=1}^{n}\frac{\partial f}{\partial \lambda_{j}}(\Hop(t))V_{j}\right).
\end{align}
Denote $g:=\frac{\partial f}{\partial \lambda_{1}}$ and consider the case $j=1$ in \eqref{Kr-Cor1-R3}. Using Theorem \ref{MOI-Thm2} we deduce that
\begin{align}
\nonumber&g(\Hop(t+\epsilon))-g(\Hop(t))\\
\label{Kr-Cor1-R4}&=\bigg[\sum_{k=1}^{n}T_{g_{k}^{[1]}}^{H_{1}+tV_{1},\ldots,H_{k-1}+tV_{k-1},H_{k}+(t+\epsilon)V_{k},H_{k}+tV_{k},H_{k+1}+(t+\epsilon)V_{k+1},\ldots,H_{n}+(t+\epsilon)V_{n}}(I,\ldots,\underbrace{\epsilon V_{k}}_{k},\ldots,I)\bigg].
\end{align}
Now the symmetric property of the normed ideal $\il$ ensures the following estimate
\begin{align}
\nonumber&\left|\Tril\left\{g(\Hop(t+\epsilon))V_{1}-g(\Hop(t))V_{1}\right\}\right|\\
\label{Kr-Cor1-R5}\le&\|\Tril\|_{\il^{*}}\cdot\|V_{1}\|\cdot\left\|g(\Hop(t+\epsilon))-g(\Hop(t))\right\|_{\il}.
\end{align}
Thus, by \eqref{Kr-Cor1-R4} and Corollary \ref{MOI-Crl}, the last expression \eqref{Kr-Cor1-R5} becomes less than or equal to
\begin{align}
\label{Kr-Cor1-R6}\|\Tril\|_{\il^{*}}\cdot\epsilon\,\,\|V_{1}\|\cdot\left(\sum_{k=1}^{n}\|V_{k}\|_{\il}\,\,\frac{1}{(2\pi)^{n/2}}\left\|\widehat{\frac{\partial^{2} f}{\partial\lambda_{k}\partial\lambda_{1}}}\right\|_{L^{1}(\Rn)}\right).
\end{align}
Similar to \eqref{Kr-Cor1-R6}, one can derive the estimate for $2\le j\le n$. Hence the continuity of the function $\psi(t)$ in \eqref{Kr-Cor1-R2} is achieved. Combining this with Lemma \ref{Kr-Lem3} establishes \eqref{Kr-Cor1-R1}.
\end{proof}

Our main theorem in this section is the following.

\begin{thm}\label{Kr-Thm2}
Assume Notation \ref{Notation}. Let $H_{j}\in\bh$ and $V_{j}\in\il$ be self-adjoint operators for $j=1,\ldots,n$. Let $\Tril$ be a $\|\cdot\|_{\il}$-bounded trace on $\il$ with the Jordan decomposition given by $\Tril=\tau_{1,\il}-\tau_{2,\il}+i\tau_{3,\il}-i\tau_{4,\il}$, where $\tau_{k,\il}$ $(1\le k\le 4)$ are the positive components of $\Tril$. If $\Hop(t)\in\comn$ for all $t\in[0,1]$, then there exist finite measures $\mu_{1},\ldots,\mu_{n}$ on $\Rn$ such that for every $1\le j\le n,$
\begin{align}
\label{Kr-Thm2-R1}&\|\mu_{j}\|\le\Wtril(|V_{j}|),
\end{align}
and
\begin{align}
\label{Kr-Thm2-R2}&\Tril\left\{f(\Hop(1))-f(\Hop)\right\}=\sum_{j=1}^{n}\int_{\Rn}\frac{\partial f}{\partial \lambda_{j}}(\lambda_{1},\ldots,\lambda_{n})\,d\mu_{j}(\lambda_{1},\ldots,\lambda_{n}),
\end{align}
for every $f\in\Wnn$.
\end{thm}

\begin{proof}
Combining the results of Corollary \ref{Kr-Cor1} and Theorem \ref{Kr-Thm1}, we obtain
\begin{align}
\label{Kr-Thm2-R3}&\Tril\left\{f(\Hop(1))-f(\Hop)\right\}=\sum_{j=1}^{n}\int_{0}^{1}\Tril\left(\frac{\partial f}{\partial \lambda_{j}}(\Hop(t))V_{j}\right)dt.
\end{align}
Consider the following collection of linear functionals
\begin{align}
\label{Kr-Thm2-R4}&\varphi_{j}:~\text{span}\left\{\frac{\partial f}{\partial \lambda_{j}}:\,\,\,f\in\Wnn\right\}\to\C,
\end{align}
defined by
\begin{align}
\label{Kr-Thm2-R5}&\varphi_{j}\left(\frac{\partial f}{\partial \lambda_{j}}\right)=\int_{0}^{1}\Tril\left(\frac{\partial f}{\partial \lambda_{j}}(\Hop(t))V_{j}\right)dt,\quad\text{for}~1\le j\le n.
\end{align}
Consequently, it follows from \eqref{Kr-Thm1-R2} that
\begin{align*}
\left|\varphi_{j}\left(\frac{\partial f}{\partial \lambda_{j}}\right)\right|\le\Wtril(|V_{j}|)\left\|\frac{\partial f}{\partial \lambda_{j}}\right\|_{L^{\infty}(\Rn)},\quad\text{for}~1\le j \le n,
\end{align*}
where $\Wtril$ is given by \eqref{Trace-Sum}. By the Hahn-Banach theorem and the Riesz representation theorem for elements of $(C_0(\Rn))^*$,  there exists a complex measure $\mu_{j}$ corresponding to the bounded linear functional $\varphi_{j}$ in \eqref{Kr-Thm2-R4} such that
$$\|\varphi_{j}\|_{\infty}=\|\mu_{j}\|\le\Wtril(|V_{j}|),$$
and
\begin{align}
\label{Kr-Thm2-R7}\varphi_{j}\left(\frac{\partial f}{\partial \lambda_{j}}\right)=\int_{\Rn}\frac{\partial f}{\partial \lambda_{j}}(\lambda_{1},\ldots,\lambda_{n})\,d\mu_{j}(\lambda_{1},\ldots,\lambda_{n}).
\end{align}
Finally, combining \eqref{Kr-Thm2-R7} with \eqref{Kr-Thm2-R5} and \eqref{Kr-Thm2-R3} establishes \eqref{Kr-Thm2-R2}. This completes the proof.
\end{proof}

The next theorem relaxes the commutativity assumption imposed on the tuple $\Hop(t)$ for $t\in(0,1)$ in Theorem \ref{Kr-Thm2} for the Lorentz ideal $\il=\mathcal{M}_\psi$.

\begin{thm}\label{Kr-Thm3}
Assume Notation \ref{Notation}. Let $\mathcal{M}_\psi$ be a Lorentz ideal, where the function $\psi$ satisfies \eqref{psi-growth-condition} for some $0<\varepsilon<1/2$. Let $\tau_{\psi}$ be a bounded singular trace on $\mathcal{M}_\psi$. Let $H_{j}\in\bh$ and $V_{j}\in\mathcal{M}_\psi$ be self-adjoint operators for $j=1,\ldots,n$. If $\Hop,\Hop(1)\in\comn$, then there exist finite measures $\mu_{1},\ldots,\mu_{n}$ on $\Rn$ such that
\begin{align}
\label{Kr-Thm3-R3}&\tau_\psi\left\{f(\Hop(1))-f(\Hop)\right\}=\sum_{j=1}^{n}\int_{\Rn}\frac{\partial f}{\partial \lambda_{j}}(\lambda_{1},\ldots,\lambda_{n})\,d\mu_{j}(\lambda_{1},\ldots,\lambda_{n}),
\end{align}
for every $f\in\W_{2}(\Rn)$.
\end{thm}

\begin{proof}
Consider $f\in\W_{2}(\Rn)$. By Theorem \ref{MOI-Thm2} and Lemma \ref{Kr-Lem2}, one can verify that 
$$\left(f(\Hop(1))-f(\Hop)-\frac{d}{dt}\bigg|_{t=0}^{}T_f^{H_1(t),\ldots,H_n(t)}(I,\ldots,I)\right)\in\mathcal{M}_\psi^{2}.$$
Using \eqref{Dixmier-Property}, the trace of the above object vanishes. Therefore,
$$\tau_{\psi}\left(f(\Hop(1))-f(\Hop)\right)=\tau_{\psi}\left(\frac{d}{dt}\bigg|_{t=0}^{}T_f^{H_1(t),\ldots,H_n(t)}(I,\ldots,I)\right).$$
Since $\Hop\in\comn$, Theorem \ref{Kr-Thm1} further implies	
\begin{align}
\nonumber&\tau_\psi\left\{f(\Hop(1))-f(\Hop)\right\}=\sum_{j=1}^{n}\tau_\psi\left(\frac{\partial f}{\partial \lambda_{j}}(\Hop)V_{j}\right).
\end{align}
Finally, by reasoning analogous to the proof of Theorem \ref{Kr-Thm2}, we conclude the proof.
\end{proof}

The following theorem establishes a result analogous to Theorem \ref{Kr-Thm2}, but relaxes the boundedness assumption on the tuple $\Hop(t)$ for $t\in[0, 1]$, in the case $\il=\boh$.

\begin{thm}\label{Kr-Thm4}
Assume Notation \ref{Notation}. Let $V_{j}\in\boh$ and $H_{j}$ be not necessarily bounded self-adjoint operators in $\hil$ for $j=1,\ldots,n$. If $\Hop(t)\in\comn$ for all $t\in[0,1]$, then there exist measures $\mu_{1},\ldots,\mu_{n}$ on $\Rn$ such that for every $1\le j\le n,$
\begin{align}
\label{Kr-Thm4-R1}&\|\mu_{j}\|\le \|V_{j}\|_{1},
\end{align}
and
\begin{align*}
&\Tr\left\{f(\Hop(1))-f(\Hop)\right\}=\sum_{j=1}^{n}\int_{\Rn}\frac{\partial f}{\partial \lambda_{j}}(\lambda_{1},\ldots,\lambda_{n})\,d\mu_{j}(\lambda_{1},\ldots,\lambda_{n}),
\end{align*}
for every $f\in\Wnn$.
\end{thm}

\begin{proof}
The proof follows closely that of Theorem \ref{Kr-Thm2}.
\end{proof}

Before proceeding further, we summarize our results on the Krein trace formula.

\begin{rmrk} 
Assume Hypothesis \ref{Assumption}(a). Let $H_{j}$ be self-adjoint operators in $\hil$ and $V_{j}=V_{j}^*\in\il$ for $j=1,\ldots,n$. Then the following results have been established: 
\begin{enumerate}[{\normalfont(i)}]
\item If $H_{j}\in\bh$ for $1\le j\le n$, then Theorem \ref{Kr-Thm2} yields formula \eqref{PellerProblem} under an extra assumption that $\V$ is commutative (see Lemma \ref{Sufficient-cond-Comn} below). In the special case of the Lorentz ideal $\il=\mathcal{M}_{\psi}$ (with $\psi$ satisfying \eqref{psi-growth-condition} for some $0< \varepsilon<1/2$), Theorem \ref{Kr-Thm3} establishes \eqref{PellerProblem} without this extra commutativity assumption.

\item If $\il=\boh$, then Theorem \ref{Kr-Thm4} establishes formula \eqref{PellerProblem} under the additional assumption that $\V$ is commutative. Here the initial operator tuple $\Hop$ may be unbounded.
\end{enumerate}
\end{rmrk}

\begin{ppsn}\label{KrPrp1}
Let $H_{j}$ be self-adjoint operators in $\hil$ and $V_{j}=V_{j}^*\in\bh$ for $j=1,\ldots,n$. Suppose $\Hop(t)\in\comn$ for all $t\in[0,1]$. Then the following assertions hold:
\begin{enumerate}[{\normalfont(i)}]
\item If $H_{j}\in\bh$ and $V_{j}\in\il$ satisfy $\Wtril(|V_{j}|)=0$ for $1\le j\le n$, then Theorem \ref{Kr-Thm2} holds with $\mu_{j}=0$ for $1\le j\le n$.
\item If $V_{j}\in\boh$ satisfies $\Tr(|V_{j}|)=0$ for $1\le j\le n$, then Theorem \ref{Kr-Thm4} holds with $\mu_{j}=0$ for $1\le j\le n$.
\end{enumerate} 
\end{ppsn}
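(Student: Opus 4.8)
The plan is to observe that both assertions follow at once from the bounds established in Theorem \ref{Kr-Thm2} and Theorem \ref{Kr-Thm4} together with the uniqueness of the representing measures furnished by the Riesz representation theorem. First I would recall from \eqref{Kr-Thm2-R1} that the measure $\mu_j$ constructed in Theorem \ref{Kr-Thm2} satisfies $\|\mu_j\|\le\Wtril(|V_j|)$ for every $1\le j\le n$. Hence if $\Wtril(|V_j|)=0$, then $\|\mu_j\|=0$, which forces $\mu_j=0$ as a complex finite Borel measure on $\Rn$. Plugging $\mu_j=0$ back into \eqref{Kr-Thm2-R2} shows that the trace formula continues to hold with the zero measure, which is the content of (i). The argument for (ii) is verbatim the same, using the bound \eqref{Kr-Thm4-R1}, namely $\|\mu_j\|\le\|V_j\|_1=\Tr(|V_j|)$, so that $\Tr(|V_j|)=0$ yields $\mu_j=0$.

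Alternatively, and perhaps more transparently, one can bypass the measure entirely and argue directly from the estimate \eqref{Kr-Thm1-R2}: if $\Wtril(|V_j|)=0$ then
\begin{align*}
\left|\Tril\left(\frac{\partial f}{\partial \lambda_{j}}(\Hop(t))V_{j}\right)\right|\le\Wtril\left(|V_{j}|\right)\left\|\frac{\partial f}{\partial \lambda_{j}}\right\|_{L^{\infty}(\Rn)}=0
\end{align*}
for every $t\in[0,1]$ and every $f\in\Wnn$, so that each summand in the representation \eqref{Kr-Thm2-R3} vanishes. Consequently $\Tril\{f(\Hop(1))-f(\Hop)\}=0$, and this is precisely \eqref{Kr-Thm2-R2} with $\mu_j=0$. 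The same reasoning, applied with $\Tril=\Tr$ and the observation $\widetilde{\Tr}(|V_j|)=\Tr(|V_j|)$ (since $\Tr$ is already positive, its Jordan decomposition is trivial), disposes of (ii) for not necessarily bounded $H_j$.

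There is essentially no obstacle here; the only points requiring a word of care are that $\Wtril(|V_j|)=0$ genuinely controls the relevant functional — which is exactly what \eqref{Kr-Thm1-R2} provides — and that a finite Borel measure of total variation zero is the zero measure, so the trace formula degenerates consistently. I would present the short direct argument via \eqref{Kr-Thm1-R2} and \eqref{Kr-Thm2-R3} as the main line, and remark that it also follows from the norm bounds \eqref{Kr-Thm2-R1} and \eqref{Kr-Thm4-R1} on $\mu_j$.
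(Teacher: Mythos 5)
Your first argument is exactly the paper's proof: the bounds \eqref{Kr-Thm2-R1} and \eqref{Kr-Thm4-R1} on the total variation force $\mu_j=0$, and the trace formulas then hold trivially with the zero measures. The alternative direct route via \eqref{Kr-Thm1-R2} and \eqref{Kr-Thm2-R3} is also valid, but it is not needed and the paper does not use it.
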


\begin{proof}
Based on the estimates \eqref{Kr-Thm2-R1} and \eqref{Kr-Thm4-R1} for the total variation of measures $\mu_{j}$, $1\le j\le n$, the result follows immediately. 
\end{proof}

We now provide several sufficient conditions for the tuple $\Hop(t)$ to be commutative for all $t\in[0,1]$.

\begin{lma}{\normalfont\cite[Lemma 3.7]{Sk15}}\label{Sufficient-cond-Comn}
Assume Notation \ref{Notation}. The following statements are equivalent.
\begin{enumerate}[\normalfont(i)]
\item $\Hop(t):=(H_{1}+tV_{1},\ldots,H_{n}+tV_{n})\in\comn$, for every $t\in[0,1]$.
\item $\Hop,\Hop(1),\mathbf{V}_{n}\in\comn$.
\item $\Hop,\mathbf{V}_{n}\in\comn$, $[H_{i},V_{j}]=[H_{j},V_{i}],$ for all $1\le i,j\le n$.
\end{enumerate}
\end{lma}

To clarify the conditions of the above lemma, it is necessary to consider concrete examples:

\begin{xmpl}[Bounded case]\label{Exp-1}
\begin{enumerate}[{\normalfont(i)}]
\item Let $H_0,V\in\bh$ be two self-adjoint operators with $H_0V=VH_0.$ Let $f_i\in\mathcal{W}_1(\R)$, $1\le i\le n$, be real-valued functions. Define
\begin{align*}
H_i=f_i(H_0)\ \ \ \mbox{and}\ \ \  
V_i=f_{i}(H_0+V)-f_{i}(H_0).
\end{align*}
Then, by \cite[Lemma 3.7]{DySk14}, $V_i\in\il$ (resp. $V_{i}\in\ideal$) whenever $V\in\il$ (resp. $V\in\ideal$). Consequently, by the spectral theorem and the functional calculus for self‑adjoint operators, the tuples $\Hop,\Hop(1),\V\in\comn$.

\medskip
		
\item Let $(A_1,\ldots,A_n)$, $(B_1,\ldots,B_n)$ be two $n$-tuple of bounded self-adjoint operators on $\hil$. For $1\le k\le n$, define
\begin{align*}
H_{k}&:=I\oplus\cdots\oplus I\oplus \underbrace{A_{k}}_{k}\oplus I\oplus\cdots\oplus I,\\
V_{k}&:=0\oplus\cdots\oplus 0\oplus \underbrace{B_{k}}_{k}\oplus 0\oplus\cdots\oplus 0.
\end{align*} 
Straightforward verification shows that the self-adjoint tuple $\Hop(t)\in\comn$ for all $t\in[0,1]$.

\medskip
		
\item Let $\mathcal{K}$ be a finite dimensional subspace of $\hil$, and let $P_{\mathcal{K}}$ denote the orthogonal projection from $\mathcal{H}$ onto $\mathcal{K}$. Let $(A_{1},\ldots,A_{n})$ and $(B_{1},\ldots,B_{n})$ be two $n$-tuple of bounded self-adjoint operators on $\hil$. Let $(C_{1},\ldots,C_{n})\in\comn$ be an $n$-tuple of bounded self-adjoint operators. For $1\leq j\leq n$, define
\begin{align}
\nonumber H_{j}=P_{\mathcal{K}}\otimes\cdots\otimes P_{\mathcal{K}}\otimes \underbrace{P_{\mathcal{K}}A_{j}P_{\mathcal{K}}}_{j}\otimes P_{\mathcal{K}}\otimes\cdots\otimes P_{\mathcal{K}}\otimes C_{j},
\end{align}
and 
\begin{align}
\nonumber V_{j}=P_{\mathcal{K}}\otimes\cdots\otimes P_{\mathcal{K}}\otimes \underbrace{P_{\mathcal{K}}B_{j}P_{\mathcal{K}}}_{j}\otimes P_{\mathcal{K}}\otimes\cdots\otimes P_{\mathcal{K}}\otimes C_{j}.
\end{align}
Then the self-adjoint tuple $\Hop(t)\in\comn$ for all $t\in[0,1]$.
\end{enumerate}	
\end{xmpl}

\begin{xmpl}[Unbounded case]
Let $n=2$. Let $\{e_k\}_{k\in\mathbb{{N}}}$ be an orthonormal basis of $\hil$. Define two self-adjoint operators $H_0, V$ on $\{e_k\}_{k\in\mathbb{{N}}}$ by
\begin{align*}
H_0(e_{2k-1})&= ke_{2k-1}+e_{2k},~ H_0(e_{2k})= e_{2k-1}-ke_{2k},~\text{for}~k\in\N,\text{ and } \\
V(e_1)&=e_2,~V(e_2)=e_1, V(e_k)=0,~\text{ for } k\ge 3.
\end{align*}
Then $H_0$ is an unbounded self-adjoint operator that does not commute with $V$. Set $H_1=H_0$, $H_2=H_0^2$, $V_1=V$, $V_2=V^2$. It is now straightforward to verify that $(H_{1}+V_{1},H_{2}+V_{2})\in\comn$. Consequently, $(H_{1}+tV_{1},H_{2}+tV_{2})\in\comn$ for all $t\in[0,1]$.
\end{xmpl}

\section{Koplienko trace formula for self-adjoint operators}\label{Sec4}
In this section, we establish the Koplienko trace formula for an $n$-tuple of commuting self-adjoint operators (possibly unbounded) with perturbations in the root ideal $\ideal$ of a symmetrically normed ideal $\il$ (which, of course, satisfies Hypothesis \ref{Assumption}) of $\bh$  corresponding to the function space $\mathfrak{R}(\Rn)$ (see \eqref{rational}). Our goal will be accomplished in Theorem \ref{Kp-Thm2}. We begin with the following convention.

\medskip

\noindent\textbf{Convention}: Throughout this section we always deal with possibly unbounded self-adjoint
operators. We will not repeatedly use the phrase \enquote{$H$ is a not necessarily bounded self-adjoint operator in $\hil$}. Instead, we simply write \enquote{$H$ is a self-adjoint operator in $\hil$}.

\medskip

Before we proceed, we fix some notation. Set $H_{l}(t)=H_{l}+tV_{l}$ for $1\le l\le n$ and $t\in[0,1]$, and therefore from \eqref{Hnt&Lnt} we now have:
\begin{align*}
\Hop(t)=(H_{1}(t),\ldots,H_{n}(t)).
\end{align*}
Denote
\begin{align}\label{Rational-without-span}
\mathfrak{R}:=\left\{(\lambda_{1},\ldots,\lambda_{n})\in\Rn\mapsto(z-\lambda_{1})^{-k_{1}}\cdots(z-\lambda_{n})^{-k_{n}}:\,\,\Im(z)\neq 0,\,k_{i}\in\N\right\}.
\end{align}
For $f\in\mathfrak{R}$, the corresponding operator function associated with an $n$-tuple of unbounded self-adjoint operators $\Hop$ (not necessarily commutative) is defined by
$$f(\Hop)=(zI-H_{1})^{-k_{1}}\cdots(zI-H_{n})^{-k_{n}}.$$

The derivative of $f\in\mathfrak{R}$ along the direction $\V$ can be computed via partial derivatives along directions $V_{1},\ldots,V_{n}.$

\begin{lma}\label{Kp-Lem1}
Assume Notation \ref{Notation}. Let $f\in\mathfrak{R}$ be given by
\[f(\lambda_{1},\ldots,\lambda_{n})=(z-\lambda_{1})^{-k_{1}}\cdots(z-\lambda_{n})^{-k_{n}},\]
where $\Im(z)\neq0$ and $k_{1},\ldots,k_{n}\in\N$. Let $H_{j}$ be self-adjoint operators in $\hil$ and $V_{j}=V_{j}^*\in\bh$ for $j=1,\ldots,n$. Then the following assertions hold.
\begin{enumerate}[{\normalfont(i)}]
\item If $V_{j}\in\il$ for $1\le j\le n$, then $t\mapsto\dds\bigg|_{s=t}^{}f(\Hop(s))$ exists in the ideal norm $\|\cdot\|_{\il}$ such that
\begin{flalign}
\label{Kp-Lem1-R1}&\frac{d}{ds}\bigg|_{s=t}^{}f(\Hop(s))=\sum_{j=1}^{n}D_{H_{j}}^{f}(t),
\end{flalign}
where
\begin{align}
\label{Kp-Lem1-R2}&D_{H_{j}}^{f}(t)=\Big[\prod_{l=1}^{j-1}(zI-H_{l}(t))^{-k_{l}}\Big]\frac{d}{ds}\bigg|_{s=t}^{}(zI-H_{j}(s))^{-k_{j}}\Big[\prod_{l=j+1}^{n}(zI-H_{l}(t))^{-k_{l}}\Big].
\end{align}
\item If $V_{j}\in\ideal$ for $1\le j\le n$, then $t\mapsto\ddds\bigg|_{s=t}^{}f(\Hop(s))$ exists in the ideal norm $\|\cdot\|_{\il}$ such that
\begin{flalign}
\label{Kp-Lem1-R3}&\ddds\bigg|_{s=t}^{}f(\Hop(s))=2\sum_{1\le i<j\le n}D_{H_{i},H_{j}}^{f}(t)+\sum_{1\le j\le n}D_{H_{j},H_{j}}^{f}(t),
\end{flalign}
where
\begin{align}
\nonumber& D_{H_{i},H_{j}}^{f}(t)=\Big[\prod_{l=1}^{i-1}(zI-H_{l}(t))^{-k_{l}}\Big]\frac{d}{ds}\bigg|_{s=t}^{}(zI-H_{i}(s))^{-k_{i}}\Big[\prod_{l=i+1}^{j-1}(zI-H_{l}(t))^{-k_{l}}\Big]\\
\label{Kp-Lem1-R4}&\hspace{2.1in}\times \frac{d}{ds}\bigg|_{s=t}^{}(zI-H_{j}(s))^{-k_{j}}\Big[\prod_{l=j+1}^{n}(zI-H_{l}(t))^{-k_{l}}\Big],
\end{align}
and
\begin{align}
\label{Kp-Lem1-R5}&D_{H_{j},H_{j}}^{f}(t)=\Big[\prod_{l=1}^{j-1}(zI-H_{l}(t))^{-k_{l}}\Big]\ddds\bigg|_{s=t}^{}(zI-H_{j}(s))^{-k_{j}}\Big[\prod_{l=j+1}^{n}(zI-H_{l}(t))^{-k_{l}}\Big].
\end{align}
\end{enumerate}
\end{lma}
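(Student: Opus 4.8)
The strategy is to reduce everything to the scalar resolvent identity and a telescoping argument, carried out one variable at a time. First I would treat a single factor: for a self-adjoint operator $K$ (possibly unbounded) and a bounded self-adjoint perturbation $W$, the map $s\mapsto (zI-(K+sW))^{-k}$ is differentiable in operator norm, with derivative obtained by the product rule applied to the resolvent identity
\[
(zI-(K+sW))^{-1}-(zI-K)^{-1}=(zI-(K+sW))^{-1}\,sW\,(zI-K)^{-1},
\]
which gives $\dds|_{s=0}(zI-(K+sW))^{-1}=(zI-K)^{-1}W(zI-K)^{-1}$ and, by Leibniz, an explicit norm-convergent formula for $\dds|_{s=0}(zI-(K+sW))^{-k}$ as a sum of $k$ terms of the form $(zI-K)^{-a}W(zI-K)^{-b}$ with $a+b=k+1$. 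When $W\in\il$ (resp. $W\in\ideal$), property (iii) of Definition \ref{Ideal-Def} (resp. inequality \eqref{idealinequality}) shows this derivative lies in $\il$ and the difference quotient converges in $\|\cdot\|_\il$; for the second derivative one differentiates once more, producing a sum of terms $(zI-K)^{-a}W(zI-K)^{-b}W(zI-K)^{-c}$ with $a+b+c=k+2$, again in $\il$ when $W\in\ideal$ by two applications of \eqref{idealinequality}.

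Next I would assemble the full operator function $f(\Hop(s))=\prod_{l=1}^n (zI-H_l(s))^{-k_l}$ as a product of $n$ norm-continuous, norm-differentiable factors, each with values in $\bh$ and with difference built from a factor in $\il$ (or $\ideal$). The first-order claim \eqref{Kp-Lem1-R1}–\eqref{Kp-Lem1-R2} then follows from the Leibniz rule for a finite product of differentiable Banach-space-valued functions: writing
\[
\frac{f(\Hop(t+h))-f(\Hop(t))}{h}=\sum_{j=1}^n \Big(\prod_{l<j}(zI-H_l(t+h))^{-k_l}\Big)\,\frac{(zI-H_j(t+h))^{-k_j}-(zI-H_j(t))^{-k_j}}{h}\,\Big(\prod_{l>j}(zI-H_l(t))^{-k_l}\Big)
\]
and letting $h\to 0$, using norm-continuity of the outer factors together with the $\|\cdot\|_\il$-convergence of the middle difference quotient established in the previous step; the boundedness of the $H_l(t)$ plays no role, only self-adjointness, so the unbounded case is covered uniformly. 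For the second-order claim \eqref{Kp-Lem1-R3}, I would differentiate the $n$-term sum above once more, apply the Leibniz rule inside each term, and collect: differentiating the $j$-th middle factor again yields the $D^f_{H_j,H_j}(t)$ terms of \eqref{Kp-Lem1-R5}, while differentiating one of the outer factors (say the $i$-th, $i\neq j$) yields terms with two first-order derivative factors; by symmetry in $i\leftrightarrow j$ these combine into $2\sum_{i<j}D^f_{H_i,H_j}(t)$ as in \eqref{Kp-Lem1-R4}. Here each $D^f_{H_i,H_j}(t)$ has exactly two factors coming from $\ideal$, so by \eqref{idealinequality} it lies in $\il$, and the difference quotients converge in $\|\cdot\|_\il$ by the continuity in $\|\cdot\|_\ideal$ of maps like $s\mapsto (zI-H_i(s))^{-a}V_i(zI-H_i(s))^{-b}$, established exactly as the continuity of \eqref{MOI-Crl-R4} in Corollary \ref{MOI-Crl}.

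The main obstacle is the careful bookkeeping of the $\|\cdot\|_\il$-convergence of all difference quotients when $H_j$ is unbounded: one cannot differentiate the resolvent power factors in $\|\cdot\|_\il$ directly since $V_j\notin\il$ need not commute with anything, so I would systematically keep exactly one ideal-valued factor per derivative and push all operator-norm estimates onto the remaining (bounded) resolvent factors, using $\|(zI-K)^{-1}\|\le |\Im z|^{-1}$ uniformly in the self-adjoint $K$. A secondary technical point is justifying termwise passage to the limit in the finite Leibniz sum; since the sum is finite and each term converges in $\|\cdot\|_\il$, this is routine once the single-factor differentiability is in hand. No use of commutativity of the tuple is needed for this lemma — that enters only later when taking traces — so the statement holds for arbitrary (not necessarily commuting) self-adjoint $H_j$, consistent with the remark preceding the lemma.
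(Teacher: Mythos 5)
Your proposal is correct and follows essentially the same route as the paper: a telescoping (Leibniz) decomposition of the difference quotient of the product, the resolvent-identity formula for $\dds\big|_{s=t}(zI-H_j(s))^{-k_j}$ (which the paper imports from \cite[Lemma 3.6]{DySk14}), and $\|\cdot\|_{\il}$-estimates obtained by keeping exactly one ideal-valued factor per derivative while bounding the remaining resolvent factors in operator norm, with the second-order case handled analogously using \eqref{idealinequality}. The paper writes out only the first-order case and declares the second order "completely analogous," so your explicit collection of the mixed terms into $2\sum_{i<j}D^{f}_{H_i,H_j}(t)$ is just a fuller account of the same argument.
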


\begin{proof}
We shall only establish the existence of $t\mapsto\dds\bigg|_{s=t}^{}f(\Hop(s))$ in the ideal norm $\|\cdot\|_{\il}$. The result for the existence of $t\mapsto\ddds\bigg|_{s=t}^{}f(\Hop(s))$ in $\|\cdot\|_{\il}$ can be established completely analogously. By the telescoping technique we obtain
\begin{align}
\nonumber&\frac{1}{\epsilon}\left\{\prod_{l=1}^{n}(zI-H_{l}(t+\epsilon))^{-k_{l}}-\prod_{l=1}^{n}(zI-H_{l}(t))^{-k_{l}}\right\}\\
\nonumber&=\sum_{j=1}^{n}\bigg\{\Big[\prod_{l=1}^{j-1}(zI-H_{l}(t))^{-k_{l}}\Big]\bigg(\sum_{\substack {1\le p_{0}^{j},p_{1}^{j}\le k_{j}\\p_{0}^{j}+p_{1}^{j}=k_{j}+1}}(zI-H_{j}(t+\epsilon))^{-p_{0}^{j}}V_{j}(zI-H_{j}(t))^{-p_{1}^{j}}\bigg)\\
\nonumber&\hspace{3in}\times\Big[\prod_{l=j+1}^{n}(zI-H_{l}(t+\epsilon))^{-k_{l}}\Big]\bigg\}\\
\label{Kp-Lem1-R6}&=:\sum_{j=1}^{n}\Gamma_{j}^{f}(t+\epsilon)\ \ \mbox{(say)}.
\end{align}
Applying \cite[Lemma 3.6]{DySk14} to \eqref{Kp-Lem1-R2} we obtain
\begin{align}
\nonumber&D_{H_{j}}^{f}(t)\\
\nonumber=&\Big[\prod_{l=1}^{j-1}(zI-H_{l}(t))^{-k_{l}}\Big]\bigg(\sum_{\substack{1\le p_{0}^{j},p_{1}^{j}\le k_{j}\\p_{0}^{j}+p_{1}^{j}=k_{j}+1}}(zI-H_{j}(t))^{-p_{0}^{j}}V_{j}(zI-H_{j}(t))^{-p_{1}^{j}}\bigg)\Big[\prod_{l=j+1}^{n}(zI-H_{l}(t))^{-k_{l}}\Big].
\end{align}
Consider the difference
\begin{align}
\nonumber&\frac{1}{\epsilon}\left\{\prod_{l=1}^{n}(zI-H_{l}(t+\epsilon))^{-k_{l}}-\prod_{l=1}^{n}(zI-H_{l}(t))^{-k_{l}}\right\}-\sum_{j=1}^{n}D_{H_{j}}^{f}(t)\\
\nonumber&\stackrel{\eqref{Kp-Lem1-R6}}{=}\sum_{j=1}^{n}\left(\Gamma_{j}^{f}(t+\epsilon)-D_{H_{j}}^{f}(t)\right).
\end{align}
Now we claim that $\displaystyle\lim_{\epsilon\to0}\left\|\Gamma_{j}^{f}(t+\epsilon)-D_{H_{j}}^{f}(t)\right\|_{\il}=0$ for $1\le j\le n$. We consider only the case $j=1$ (other cases follow analogously). From \eqref{Kp-Lem1-R6} we further derive
\begin{align}
\nonumber&\left\|\Gamma_{1}^{f}(t+\epsilon)-D_{H_{1}}^{f}(t)\right\|_{\il}\\
\nonumber=&\bigg\|\sum_{\substack {1\le p_{0}^{1},p_{1}^{1}\le k_{1} \\ p_{0}^{1}+p_{1}^{1}=k_{1}+1}}(zI-H_{1}(t+\epsilon))^{-p_{0}^{1}}\,V_{1}(zI-H_{1}(t))^{-p_{1}^{1}}\Big[\prod_{l=2}^{n}(zI-H_{l}(t+\epsilon))^{-k_{l}}\Big]-D_{H_{1}}^{f}(t)\,\bigg\|_{\il}\\
\nonumber\le&\epsilon\, k_{1}K\,\|V_{1}\|\left(\sum_{i=1}^{n}k_{i}\cdot\|V_{i}\|_{\il}\right),
\end{align}
for some constant $K$. This completes the proof.
\end{proof}

\begin{rmrk}\label{Kp-Rmrk1}
Despite the fact that Lemma \ref{Kp-Lem1} is true for $f\in\mathfrak{R}$, but it still holds for any function $f\in\RRn$ by using the linearity of the G\^{a}teaux derivative. In that case, for $f\in\RRn$ such that $f=\sum_{l=1}^{m}c_{l}\varphi_{l}$, where $\varphi_{l}\in\mathfrak{R}$, we have
$$D_{H_{j}}^{f}(t)=\sum_{l=1}^{m}c_{l}D_{H_{j}}^{\varphi_{l}}(t)~\text{ and }~D_{H_{i},H_{j}}^{f}(t)=\sum_{l=1}^{m}c_{l}D_{H_{i},H_{j}}^{\varphi_{l}}(t),~\text{ for }~1\le i\le j\le n.$$
\end{rmrk}

\begin{rmrk}\label{Kp-Rmrk2}
In the next section (Section \ref{Sec5}), we focus on maximal dissipative operators. There, we will require Lemma \ref{Kp-Lem1} in the setting of maximal dissipative operators. Specifically, if each self-adjoint operator $H_{j}$ is replaced by a linear operator $L_{j}$ such that $L_{j}$ and $L_{j}+V_{j}$ are maximal dissipative for $j=1,2$, then it is routine to verify that Lemma \ref{Kp-Lem1} holds for every $f\in\mathfrak{R}(\R^{2})^{-}$ (see \eqref{rationallower}).
\end{rmrk}

We shall need the following lemma for our use. The notations used below were introduced in Notation \ref{Notation}.

\begin{lma}\label{Kp-Lem2}
Let $H_{j}$ be self-adjoint operators in $\hil$ and $V_{j}=V_{j}^*\in\ideal$ for $j=1,\ldots,n$. Then, for $f\in\RRn$, the function	
\begin{align}
\label{Kp-Lem2-R1}t\in[0,1]\mapsto\frac{d}{ds}\bigg|_{s=t}^{}f(\Hop(s))-\frac{d}{ds}\bigg|_{s=0}^{}f(\Hop(s))
\end{align}
is uniformly continuous and, hence, Bochner integrable on $[0,1]$ with respect to the ideal norm $\|\cdot\|_{\il}$.
\end{lma}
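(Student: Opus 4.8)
The plan is to prove uniform continuity of the map in \eqref{Kp-Lem2-R1} on $[0,1]$ directly, and then invoke the standard fact that a continuous (indeed uniformly continuous) function from a compact interval into a Banach space is Bochner integrable. Since subtracting the constant term $\frac{d}{ds}|_{s=0}f(\Hop(s))$ does not affect continuity, it suffices to show that
\[
g:\ t\mapsto \frac{d}{ds}\bigg|_{s=t}^{}f(\Hop(s))
\]
is uniformly continuous on $[0,1]$ in $\|\cdot\|_{\il}$. By Remark \ref{Kp-Rmrk1} and linearity we may take $f\in\mathfrak{R}$, say $f(\lambda_{1},\ldots,\lambda_{n})=(z-\lambda_{1})^{-k_{1}}\cdots(z-\lambda_{n})^{-k_{n}}$ with $\Im(z)\neq 0$. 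Then by Lemma \ref{Kp-Lem1}\,(i), $g(t)=\sum_{j=1}^{n}D_{H_{j}}^{f}(t)$, where each $D_{H_{j}}^{f}(t)$ is an explicit finite sum (over the indices $p_{0}^{j}+p_{1}^{j}=k_{j}+1$) of products of resolvent powers $(zI-H_{l}(t))^{-k_{l}}$ together with one inner factor $(zI-H_{j}(t))^{-p_{0}^{j}}V_{j}(zI-H_{j}(t))^{-p_{1}^{j}}$.

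First I would record the two elementary estimates that drive everything. Writing $R_{l}(t)=(zI-H_{l}(t))^{-1}$, we have the uniform bound $\|R_{l}(t)\|\le |\Im(z)|^{-1}$ for all $t$, and the resolvent identity gives
\[
R_{l}(t)-R_{l}(t_{0})=(t-t_{0})\,R_{l}(t)\,V_{l}\,R_{l}(t_{0}),
\]
so that $\|R_{l}(t)-R_{l}(t_{0})\|\le |\Im(z)|^{-2}\,\|V_{l}\|\,|t-t_{0}|$ in operator norm, and also $\|R_{l}(t)-R_{l}(t_{0})\|_{\ideal}\le |\Im(z)|^{-2}\,\|V_{l}\|_{\ideal}\,\|R_{l}(t_0)\|\,|t-t_0|$ (with one copy of $V_l\in\ideal$ sitting in the middle). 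Raising to powers $p$ via a telescoping sum, the same Lipschitz-in-$t$ bounds hold for $(zI-H_{l}(t))^{-p}$, with constants depending only on $z$, $p$, and $\|V_l\|$. Next I would expand the difference $D_{H_{j}}^{f}(t)-D_{H_{j}}^{f}(t_{0})$ as a telescoping sum in which exactly one resolvent-power factor is replaced by its increment at a time, using the ideal property \eqref{idealinequality}: in each term there is precisely one factor carrying a $V_i\in\ideal$ from the "middle" of $D_{H_j}^f$ and at most one more $\ideal$-valued increment, while all remaining factors are bounded operators whose norms are controlled uniformly in $t$ by $|\Im(z)|^{-1}$; Definition \ref{Ideal-Def}(iii) and \eqref{idealinequality} then bound each term in $\|\cdot\|_{\il}$ by a constant times $|t-t_0|$. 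Summing over $j$ and over the finitely many index configurations gives a Lipschitz estimate $\|g(t)-g(t_{0})\|_{\il}\le L\,|t-t_{0}|$ on $[0,1]$ for a constant $L$ depending on $z$, the $k_i$, $\|V_i\|$ and $\|V_i\|_{\ideal}$, which in particular yields uniform continuity.

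Having established uniform continuity (hence continuity) of $t\mapsto g(t)-g(0)$ on the compact set $[0,1]$ with values in the Banach space $(\il,\|\cdot\|_{\il})$, strong measurability is automatic (a continuous function into a separable-range subspace is strongly measurable) and $\int_{0}^{1}\|g(t)-g(0)\|_{\il}\,dt<\infty$ since the integrand is bounded; therefore the map is Bochner integrable on $[0,1]$, which is the assertion. The main obstacle is purely bookkeeping: organizing the telescoping expansion of $D_{H_{j}}^{f}(t)-D_{H_{j}}^{f}(t_{0})$ so that every term has exactly the right number of $\ideal$-valued factors to apply \eqref{idealinequality} (rather than accidentally needing a trace-ideal bound on a product of three or more such factors), while keeping the "outer" resolvent factors in $\bh$ with uniformly bounded operator norm. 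This is the same device already used in the proof of Lemma \ref{Kp-Lem1}, so I would phrase it to reuse that estimate wherever possible.
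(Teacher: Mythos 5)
Your proposal is correct and follows essentially the same route as the paper: reduce to $f\in\mathfrak{R}$, express the derivative via Lemma \ref{Kp-Lem1} as $\sum_{j}D_{H_{j}}^{f}(t)$, and obtain a Lipschitz bound on $D_{H_{j}}^{f}(t)-D_{H_{j}}^{f}(t_{0})$ in $\|\cdot\|_{\il}$ from resolvent-difference estimates together with \eqref{idealinequality}, whence uniform continuity and Bochner integrability. The only difference is cosmetic: the paper cites \cite[Lemma 3.6]{DySk14} for the resolvent-power increment estimate, while you carry out the resolvent identity and telescoping by hand (with a slightly loose but harmless constant).
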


\begin{proof}
Note that it suffices to prove the lemma for $f\in\mathfrak{R}.$ Equation \eqref{Kp-Lem1-R1} implies that
\begin{align}
\nonumber&\frac{d}{ds}\bigg|_{s=t+\epsilon}^{}f(\Hop(s))-\frac{d}{ds}\bigg|_{s=t}^{}f(\Hop(s))=\sum_{j=1}^{n}\left(D_{H_{j}}^{f}(t+\epsilon)-D_{H_{j}}^{f}(t)\right).
\end{align}
The following estimate is immediate from expression \eqref{Kp-Lem1-R2} and \cite[Lemma 3.6]{DySk14}:
\begin{flalign}
\nonumber&\left\|D_{H_{j}}^{f}(t+\epsilon)-D_{H_{j}}^{f}(t)\right\|_{\il}\le \epsilon\,K\,\,\bigg\{\sum_{\substack{i=1\\i\neq j}}^{n}k_{i}k_{j}\|V_{i}\|_{\ideal}\|V_{j}\|_{\ideal}+2k_{j}^{2}\|V_{j}\|_{\ideal}^{2}\bigg\},
\end{flalign}
for $1\le j\le n$, where $K$ is a constant independent of $\epsilon$. Hence, by the triangle inequality for the ideal norm $\|\cdot\|_{\il}$, the function in \eqref{Kp-Lem2-R1} is uniformly continuous on $[0,1]$.
\end{proof}

Based on the above lemma we obtain the following crucial result. The proof follows the standard approach similar to the proof of \cite[Lemma 4.7]{Sk15}.

\begin{lma}\label{Kp-Lem3}
Let $H_{j}$ be self-adjoint operators in $\hil$ and $V_{j}=V_{j}^*\in\ideal$ for $j=1,\ldots,n$. Then, for every $f\in\RRn$,
\begin{align*}
\Tril\left\{f(\Hop(1))-f(\Hop)-\frac{d}{ds}\bigg|_{s=0}^{}f(\Hop(s))\right\}=\int_{0}^{1}(1-t)\,\Tril\left(\ddds\bigg|_{s=t}^{}f(\Hop(s))\right)dt.
\end{align*}
\end{lma}

To proceed further, we need the following lemma, and its proof follows from \cite[Lemma 3.5]{Sk15}.

\begin{lma}\label{Kp-Lem4}
Let $\Tril$ be a $\|\cdot\|_{\il}$-bounded trace on $\il$ with the Jordan decomposition given by $\Tril=\tau_{1,\il}-\tau_{2,\il}+i\tau_{3,\il}-i\tau_{4,\il}$, where $\tau_{k,\il}$ $(1\le k\le 4)$ are the positive components of $\Tril$. Let $n_{1}, n_{2}, N_{1}, N_{2}\in\N$. Let $\{\delta_{i_{1}}\}_{i_{1}=1}^{N_{1}}$ and $\{\delta_{i_{2}}\}_{i_{2}=1}^{N_{2}}$ be partitions of $\C^{n_{1}}$ and $\C^{n_{2}}$, respectively, and let $E_{1}(\cdot)$ and $E_{2}(\cdot)$ be spectral measures on $\C^{n_{1}}$ and $\C^{n_{2}}$, respectively. Then for $V_{1}, V_{2}\in\ideal,$
$$\sum_{i_{1}=1}^{N_{1}}\sum_{i_{2}=1}^{N_{2}}\Big|\Tril(E_{1}(\delta_{i_{1}})\,V_{1}\,E_{2}(\delta_{i_{2}})\,V_{2}\,E_{1}(\delta_{i_{1}}))\Big|\le\sum_{k=1}^{4}\left(\tau_{k,\il}(|V_{1}|^{2})\right)^{1/2}\,\left(\tau_{k,\il}(|V_{2}|^{2})\right)^{1/2}.$$
\end{lma}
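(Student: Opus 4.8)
The plan is to imitate the argument of \cite[Lemma~3.5]{Sk15} (of which this is the two-partition analogue): reduce the claim, via the triangle inequality and the Jordan decomposition \eqref{Split-Pos-Trace} $\Tril=\tau_{1,\il}-\tau_{2,\il}+i\tau_{3,\il}-i\tau_{4,\il}$, to an estimate for each positive component $\tau_{k,\il}$, and then exploit the Cauchy--Schwarz inequality for positive traces (recalled in Section~\ref{Sec2}) together with the elementary fact that a finite pinching by orthogonal projections summing to $I$ leaves a positive trace invariant. To set up, fix indices $i_{1},i_{2}$, abbreviate $P:=E_{1}(\delta_{i_{1}})$, $Q:=E_{2}(\delta_{i_{2}})$, $A_{i_{1},i_{2}}:=PV_{1}QV_{2}P$, and note that since $Q=Q^{2}=Q^{*}$ we may factor
\[
A_{i_{1},i_{2}}=X_{i_{1},i_{2}}Y_{i_{1},i_{2}},\qquad X_{i_{1},i_{2}}:=PV_{1}Q,\quad Y_{i_{1},i_{2}}:=QV_{2}P,
\]
where $X_{i_{1},i_{2}},Y_{i_{1},i_{2}}\in\ideal$ because $\ideal$ is an ideal of $\bh$ and $V_{1},V_{2}\in\ideal$; a direct computation gives $|X_{i_{1},i_{2}}|^{2}=QV_{1}^{*}PV_{1}Q$ and $|Y_{i_{1},i_{2}}|^{2}=PV_{2}^{*}QV_{2}P$.

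For a fixed $k\in\{1,2,3,4\}$ I would then apply the Cauchy--Schwarz inequality for the positive trace $\tau_{k,\il}$ termwise and the Cauchy--Schwarz inequality for finite sums to obtain
\[
\sum_{i_{1},i_{2}}\big|\tau_{k,\il}(A_{i_{1},i_{2}})\big|\le\Big(\sum_{i_{1},i_{2}}\tau_{k,\il}(|X_{i_{1},i_{2}}|^{2})\Big)^{1/2}\Big(\sum_{i_{1},i_{2}}\tau_{k,\il}(|Y_{i_{1},i_{2}}|^{2})\Big)^{1/2}.
\]
Using $\sum_{i_{1}}E_{1}(\delta_{i_{1}})=I=\sum_{i_{2}}E_{2}(\delta_{i_{2}})$ inside the finite sums collapses them to pinchings, namely $\sum_{i_{1},i_{2}}|X_{i_{1},i_{2}}|^{2}=\sum_{i_{2}}E_{2}(\delta_{i_{2}})\,|V_{1}|^{2}\,E_{2}(\delta_{i_{2}})$ and $\sum_{i_{1},i_{2}}|Y_{i_{1},i_{2}}|^{2}=\sum_{i_{1}}E_{1}(\delta_{i_{1}})\,|V_{2}|^{2}\,E_{1}(\delta_{i_{1}})$; cyclicity of $\tau_{k,\il}$ together with $E(\delta)^{2}=E(\delta)$ then yields $\tau_{k,\il}\big(\sum_{i_{2}}E_{2}(\delta_{i_{2}})|V_{1}|^{2}E_{2}(\delta_{i_{2}})\big)=\tau_{k,\il}(|V_{1}|^{2})$ and similarly $\tau_{k,\il}(|V_{2}|^{2})$. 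Hence $\sum_{i_{1},i_{2}}|\tau_{k,\il}(A_{i_{1},i_{2}})|\le\tau_{k,\il}(|V_{1}|^{2})^{1/2}\tau_{k,\il}(|V_{2}|^{2})^{1/2}$ for each $k$.

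Summing these four bounds and invoking the triangle inequality for $\Tril=\tau_{1,\il}-\tau_{2,\il}+i\tau_{3,\il}-i\tau_{4,\il}$ gives $\sum_{i_{1},i_{2}}|\Tril(A_{i_{1},i_{2}})|\le\sum_{k=1}^{4}\tau_{k,\il}(|V_{1}|^{2})^{1/2}\tau_{k,\il}(|V_{2}|^{2})^{1/2}$, which is the assertion. I expect no genuine analytic obstacle here: the partitions are finite, so there are no convergence issues, and the whole argument is essentially algebraic. The main care is in bookkeeping — verifying the factorization $A_{i_{1},i_{2}}=X_{i_{1},i_{2}}Y_{i_{1},i_{2}}$ and the adjoint/pinching computations, and checking that $X_{i_{1},i_{2}},Y_{i_{1},i_{2}}$ really lie in $\ideal$ so that the trace Cauchy--Schwarz inequality from Section~\ref{Sec2} applies.
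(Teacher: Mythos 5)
Your proposal is correct and takes essentially the approach the paper intends: the paper offers no written proof of this lemma, presenting it as the two-partition generalization of \cite[Lemma 3.5]{Sk15} to be combined with the Jordan decomposition \eqref{Split-Pos-Trace}, and your reduction to the positive components $\tau_{k,\il}$, the factorization $E_{1}(\delta_{i_1})V_{1}E_{2}(\delta_{i_2})\cdot E_{2}(\delta_{i_2})V_{2}E_{1}(\delta_{i_1})$, the trace Cauchy--Schwarz step, and the pinching/cyclicity computation collapsing the double sum to $\tau_{k,\il}(|V_{1}|^{2})^{1/2}\tau_{k,\il}(|V_{2}|^{2})^{1/2}$ is exactly that argument. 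You have simply written out in full the details the paper delegates to the citation, and I see no gap.
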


In the second order trace formula, as in the first order case, the divided difference plays a central role. Recall the definition of the divided difference from Subsection \ref{Div-Sec}. Below we outline several useful properties of the divided difference for functions in $\RRn$.

\begin{lma}{\normalfont\cite[Lemmas 4.1, 4.2]{Sk15}}\label{Kp-Lem5}
Let $\phi\in\RRn.$ In that case, the following are true.
\begin{enumerate}[{\normalfont(i)}]
\item For $\mu_{0},\ldots,\mu_{k}\in\R$, we have
\begin{align}
\label{Kp-Lem5-R1}&\sup_{\lambda_{1},\ldots,\lambda_{n},\mu_{0},\ldots,\mu_{k}\in\R}\left|~\phi_{j}^{[k]}(\lambda_1,\ldots,\lambda_{j-1},\mu_{0},\ldots,\mu_{k},\lambda_{j+1},\ldots,\lambda_{n})\right|\le\frac{1}{k!}\left\|\frac{\partial^{k}\phi}{\partial \lambda_{j}^{k}}\right\|_{\infty}.
\end{align}
\item For $\eta_{0},\eta_{1},\mu_{0}, \mu_{1}\in\R$, we have
\begin{align}
\label{Kp-Lem5-R2}&\sup_{\substack{\lambda_{1},\ldots,\lambda_{n},\eta_{0},\eta_{1},\mu_{0},\mu_{1}\in\,\R}}\left|\phi_{i,j}^{[2]}(\lambda_1,\ldots,\lambda_{i-1},\eta_{0},\eta_{1},\lambda_{i+1},\ldots,\lambda_{j-1},\mu_{0},\mu_{1},\lambda_{j+1},\ldots,\lambda_{n})\right|\le\left\|\frac{\partial^{2}\phi}{\partial\lambda_{i}\partial\lambda_{j}}\right\|_{\infty}.
\end{align}
\end{enumerate}
\end{lma}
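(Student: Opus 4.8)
The plan is to reduce both estimates to the Hermite--Genocchi integral representation of divided differences applied to one-variable restrictions of $\phi$. Fix $\phi\in\RRn$. Being a finite linear combination of functions of the form $\prod_{l=1}^n(z-\lambda_l)^{-k_l}$ with $\Im(z)\neq0$, the function $\phi$ is $C^\infty$ on $\R^n$, and since every pole lies at distance at least $\min|\Im(z)|>0$ from $\R^n$, all partial derivatives of $\phi$ are bounded on $\R^n$. In particular, for fixed values of the coordinates other than the $j$-th, the one-variable function $g(\lambda):=\phi(\lambda_1,\ldots,\lambda_{j-1},\lambda,\lambda_{j+1},\ldots,\lambda_n)$ is smooth and satisfies $\|g^{(m)}\|_\infty\le\|\partial^m\phi/\partial\lambda_j^m\|_{L^\infty(\R^n)}$ for every $m\ge0$.

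For (i), by \eqref{Div-Diff-1} the quantity $\phi_j^{[k]}(\lambda_1,\ldots,\lambda_{j-1},\mu_0,\ldots,\mu_k,\lambda_{j+1},\ldots,\lambda_n)$ is precisely the ordinary $k$-th order divided difference $g^{[k]}(\mu_0,\ldots,\mu_k)$. For a $C^k$ function $g$ with bounded $k$-th derivative, the Hermite--Genocchi formula gives
\begin{align*}
g^{[k]}(\mu_0,\ldots,\mu_k)=\int_{\Sigma_k}g^{(k)}(s_0\mu_0+\cdots+s_k\mu_k)\,ds,
\end{align*}
where $\Sigma_k=\{(s_0,\ldots,s_k):s_i\ge0,\ s_0+\cdots+s_k=1\}$ carries Lebesgue measure of total mass $1/k!$. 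Since $g^{(k)}(\lambda)=\frac{\partial^k\phi}{\partial\lambda_j^k}(\lambda_1,\ldots,\lambda,\ldots,\lambda_n)$, bounding the integrand by $\|\partial^k\phi/\partial\lambda_j^k\|_\infty$ and taking the supremum over all arguments yields \eqref{Kp-Lem5-R1}. A minor technicality is that \eqref{Div-Diff-1} defines $g^{[k]}$ recursively via limits and thus permits coincident nodes; one checks that the Hermite--Genocchi representation survives this confluent case by first proving it for distinct nodes and passing to the limit, using continuity of $g^{(k)}$.

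For (ii), set $h(\alpha,\beta):=\phi(\lambda_1,\ldots,\alpha,\ldots,\beta,\ldots,\lambda_n)$ with $\alpha$ in slot $i$ and $\beta$ in slot $j$. By \eqref{Div-Diff-2}, $\phi_{i,j}^{[2]}$ is obtained by taking the first-order divided difference of $h$ in the $j$-slot and then in the $i$-slot. Applying the one-variable identity $g^{[1]}(a,b)=\int_0^1 g'((1-s)a+sb)\,ds$ first in the $j$-variable and then in the $i$-variable, and differentiating under the integral sign, gives
\begin{align*}
\phi_{i,j}^{[2]}(\ldots)=\int_0^1\!\!\int_0^1\frac{\partial^2\phi}{\partial\lambda_i\,\partial\lambda_j}\big(\ldots,(1-r)\eta_0+r\eta_1,\ldots,(1-s)\mu_0+s\mu_1,\ldots\big)\,ds\,dr,
\end{align*}
and \eqref{Kp-Lem5-R2} follows by bounding the integrand by $\|\partial^2\phi/\partial\lambda_i\partial\lambda_j\|_\infty$ and using that the unit square has area one.

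The routine parts are the smoothness and uniform boundedness of the derivatives of $\phi$ on $\R^n$ and the passage to confluent nodes. The step most prone to error is the iterated integral representation in (ii): one must justify differentiating under the integral sign when the inner divided difference in the $j$-variable is afterwards differenced in the $i$-variable, and confirm that the order of these two operations is immaterial. Both are legitimate here because $\phi$ together with all its partial derivatives is bounded and jointly smooth on $\R^n$, but this is where the argument requires the most attention.
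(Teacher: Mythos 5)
Your proof is correct. The paper does not prove this lemma at all --- it simply imports it from \cite[Lemmas 4.1, 4.2]{Sk15} --- and your argument via the Hermite--Genocchi integral representation (the simplex of volume $1/k!$ for part (i), the iterated $[0,1]^2$ representation of the mixed divided difference for part (ii)), together with the observation that all partial derivatives of a function in $\RRn$ are bounded on $\R^n$ because the poles stay a positive distance from the real points, is exactly the standard route by which these bounds are established in the cited source.
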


We now derive representations for the second order divided differences of functions $f\in\mathfrak{R}$.

\begin{lma}\label{Kp-Lem6}
Let $f\in\mathfrak{R}$ be given by
\[f(\lambda_{1},\ldots,\lambda_{n})=(z-\lambda_{1})^{-k_{1}}\cdots(z-\lambda_{n})^{-k_{n}},\]
where $\Im(z)\neq0$ and $k_{1},\ldots,k_{n}\in\N$. Then the following assertions hold.
\begin{enumerate}[{\normalfont(i)}]
\item If $\lambda_{i_{0}}, \lambda_{i_{1}}, \lambda_{j_{0}}, \lambda_{j_{1}}\in\R$, then
\begin{align}
\nonumber& f_{i,j}^{[2]}(\lambda_{1},\ldots,\lambda_{i-1},\lambda_{i_{0}},\lambda_{i_{1}},\lambda_{i+1},\ldots,\lambda_{j-1},\lambda_{j_{0}},\lambda_{j_{1}},\lambda_{j+1},\ldots,\lambda_{n})\\
\nonumber&=\sum_{\substack {1\le p_{0},p_{1}\le k_{i}\\p_{0}+p_{1}=k_{i}+1}}~\sum_{\substack {1\le q_{0},q_{1}\le k_{j} \\ q_{0}+q_{1}=k_{j}+1}}\Big[\prod_{l=1}^{i-1}(z-\lambda_{l})^{-k_{l}}\Big](z-\lambda_{i_{0}})^{-p_{0}}(z-\lambda_{i_{1}})^{-p_{1}}\Big[\prod_{l=i+1}^{j-1}(z-\lambda_{l})^{-k_{l}}\Big]\\
\label{Kp-Lem6-R1}&\hspace{2in}\times (z-\lambda_{j_{0}})^{-q_{0}}(z-\lambda_{j_{1}})^{-q_{1}}\Big[\prod_{l=j+1}^{n}(z-\lambda_{l})^{-k_{l}}\Big].
\end{align}
\item If $\lambda_{i_{0}}, \lambda_{i_{1}}, \lambda_{i_{2}}\in\R$, then
\begin{align}
\nonumber&f_{i}^{[2]}(\lambda_{1},\ldots,\lambda_{i-1},\lambda_{i_{0}},\lambda_{i_{1}},\lambda_{i_{2}},\lambda_{i+1},\ldots,\lambda_{n})\\
\label{Kp-Lem6-R2}&=\sum_{\substack {1\le p_{0},p_{1},p_{2}\le k_{i} \\ p_{0}+p_{1}+p_{2}=k_{i}+2}}\Big[\prod_{l=1}^{i-1}(z-\lambda_{l})^{-k_{l}}\Big](z-\lambda_{i_{0}})^{-p_{0}}(z-\lambda_{i_{1}})^{-p_{1}}(z-\lambda_{i_{2}})^{-p_{2}}\Big[\prod_{l=i+1}^{n}(z-\lambda_{l})^{-k_{l}}\Big].
\end{align}
\end{enumerate}
\end{lma}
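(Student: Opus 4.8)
The plan is to reduce both assertions to the one-variable identity for the divided difference of the resolvent power $t\mapsto(z-t)^{-k}$, namely
\[
\bigl((z-\cdot)^{-k}\bigr)^{[1]}(\mu_0,\mu_1)=\sum_{\substack{1\le p_0,p_1\le k\\ p_0+p_1=k+1}}(z-\mu_0)^{-p_0}(z-\mu_1)^{-p_1},
\]
and its second-order analogue
\[
\bigl((z-\cdot)^{-k}\bigr)^{[2]}(\mu_0,\mu_1,\mu_2)=\sum_{\substack{1\le p_0,p_1,p_2\le k\\ p_0+p_1+p_2=k+2}}(z-\mu_0)^{-p_0}(z-\mu_1)^{-p_1}(z-\mu_2)^{-p_2},
\]
both of which follow by an easy induction on $k$ (the base case $k=1$ is the elementary partial-fraction identity, and the inductive step uses the Leibniz-type rule for divided differences of a product together with $(z-t)^{-k}=(z-t)^{-1}(z-t)^{-(k+1)}$, or alternatively \cite[Lemma 3.6]{DySk14} which already packages this fact).

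For part (i), fix $f(\lambda_1,\ldots,\lambda_n)=\prod_{l=1}^n(z-\lambda_l)^{-k_l}\in\mathfrak{R}$ and $1\le i<j\le n$. By the definition \eqref{Div-Diff-2} the divided difference $f_{i,j}^{[2]}$ is obtained by first forming the divided difference in the $j$-th slot and then in the $i$-th slot. Since $f$ is a product of functions each depending on a single variable, forming $f_j^{[1]}$ leaves every factor $l\neq j$ untouched and replaces the $j$-th factor $(z-\lambda_j)^{-k_j}$ by $\bigl((z-\cdot)^{-k_j}\bigr)^{[1]}(\lambda_{j_0},\lambda_{j_1})$, i.e.\ by $\sum_{q_0+q_1=k_j+1,\,1\le q_0,q_1\le k_j}(z-\lambda_{j_0})^{-q_0}(z-\lambda_{j_1})^{-q_1}$; then forming the divided difference in the $i$-th slot acts only on the $i$-th factor and produces the analogous sum over $p_0,p_1$. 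Collecting the two sums gives exactly \eqref{Kp-Lem6-R1}. The only points requiring care are that the divided-difference operations in distinct slots commute (already noted after \eqref{Div-Diff-2}) and that the limit defining each one-variable divided difference may be taken factor-by-factor because the other factors are constant in the relevant variable.

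Part (ii) is the same argument with the second-order divided difference \eqref{Div-Diff-1} taken entirely in the $i$-th slot: all factors $l\neq i$ are inert, the $i$-th factor $(z-\lambda_i)^{-k_i}$ is replaced by its second-order divided difference $\bigl((z-\cdot)^{-k_i}\bigr)^{[2]}(\lambda_{i_0},\lambda_{i_1},\lambda_{i_2})=\sum_{p_0+p_1+p_2=k_i+2,\,1\le p_0,p_1,p_2\le k_i}(z-\lambda_{i_0})^{-p_0}(z-\lambda_{i_1})^{-p_1}(z-\lambda_{i_2})^{-p_2}$, and this is precisely \eqref{Kp-Lem6-R2}.

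The main (and essentially only) obstacle is establishing the two scalar resolvent-power divided-difference formulas cleanly; once those are in hand, the multivariable statements are a bookkeeping exercise in peeling off the inert factors. I expect the induction on $k$ for the second-order scalar formula to be the most tedious piece, but it is routine, and if one prefers it can be bypassed by invoking \cite[Lemma 3.6]{DySk14} directly. Throughout, the identities are meant pointwise for real arguments (with $\Im z\neq 0$ guaranteeing all denominators are nonzero), so no analytic subtleties arise.
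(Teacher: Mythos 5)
Your proposal is correct and follows essentially the same route as the paper, whose proof simply states that the identities follow from the definitions \eqref{Div-Diff-1} and \eqref{Div-Diff-2}; your write-up just makes explicit the factor-by-factor reduction to the standard divided-difference formulas for $(z-\cdot)^{-k}$ (the partial-fraction/telescoping identity, also available via \cite[Lemma 3.6]{DySk14}). No gaps.
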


\begin{proof}
The proof follows trivially from \eqref{Div-Diff-1}, \eqref{Div-Diff-2} and \cite[Lemma 6.1]{PoSuUsZa15}.
\end{proof}

Similar to the estimate \eqref{Kr-Thm1-R2} for the first order trace formula, we also need to establish the following crucial estimate. The proof of the theorem below is based on ideas from the proof of \cite[Theorem 4.6]{Sk15}, but with suitable modifications to accommodate unbounded operators. Before we begin, recall  Hypothesis \ref{Assumption} and the expression for $D_{H_{i},H_{j}}^{f}(t)$ from \eqref{Kp-Lem1-R4} and \eqref{Kp-Lem1-R5}.

\begin{thm}\label{Kp-Thm1}
Assume Notation \ref{Notation}. Let $\Tril$ be a $\|\cdot\|_{\il}$-bounded trace on $\il$ with the Jordan decomposition given by $\Tril=\tau_{1,\il}-\tau_{2,\il}+i\tau_{3,\il}-i\tau_{4,\il}$, where $\tau_{k,\il}$ $(1\le k\le 4)$ are the positive components of $\Tril$. Let $H_{j}$ be self-adjoint operators in $\hil$ and $V_{j}=V_{j}^*\in\ideal$ for $j=1,\ldots,n$. Suppose that there exists $t\in[0,1]$ such that $\Hop(t)\in\comn$. Then, for every $f\in\RRn$, 
\begin{align}
\label{Kp-Thm1-R1}&\left|\Tril\left(D_{H_{i},H_{j}}^{f}(t)\right)\right|\le\sum_{k=1}^{4}\left(\tau_{k,\il}(|V_{i}|^{2})\right)^{1/2}\,\left(\tau_{k,\il}(|V_{j}|^{2})\right)^{1/2}\,\,\left\|\frac{\partial^{2} f}{\partial \lambda_{i}\,\partial \lambda_{j}}\right\|_{L^{\infty}(\Rn)},
\end{align}
with $1\le i\le j\le n$.
\end{thm}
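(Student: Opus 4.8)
The plan is to reduce the estimate \eqref{Kp-Thm1-R1} to a bilinear Cauchy--Schwarz bound via the combinatorial representation of the second order divided difference from Lemma \ref{Kp-Lem6} and the trace estimate of Lemma \ref{Kp-Lem4}. First I would treat the two cases separately, namely $i<j$ (using \eqref{Kp-Lem1-R4} and \eqref{Kp-Lem6-R1}) and $i=j$ (using \eqref{Kp-Lem1-R5} and \eqref{Kp-Lem6-R2}), since only the first requires the joint spectral theorem on two distinct coordinates while the second uses it on a single coordinate. By Remark \ref{Kp-Rmrk1} and linearity of $\Tril$ it suffices to prove the bound for $f\in\mathfrak{R}$, i.e.\ $f(\lambda_1,\ldots,\lambda_n)=(z-\lambda_1)^{-k_1}\cdots(z-\lambda_n)^{-k_n}$.

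For the case $i<j$, I would start from the explicit formula \eqref{Kp-Lem1-R4} for $D_{H_i,H_j}^{f}(t)$, expand the two derivative factors $\dds|_{s=t}(zI-H_i(s))^{-k_i}$ and $\dds|_{s=t}(zI-H_j(s))^{-k_j}$ using \cite[Lemma 3.6]{DySk14} into sums $\sum_{p_0+p_1=k_i+1}(zI-H_i(t))^{-p_0}V_i(zI-H_i(t))^{-p_1}$ and the analogous one for $j$. Since $\Hop(t)\in\comn$, all resolvent factors $(zI-H_l(t))^{-k_l}$ commute with each other; using the joint spectral measure $E_t$ of $\Hop(t)$ and the cyclicity of $\Tril$, I can collect every resolvent factor not equal to $V_i$ or $V_j$ into a single bounded function of the commuting tuple, say $g(\Hop(t))=\prod_{l\neq i,j}(zI-H_l(t))^{-k_l}\cdot(zI-H_i(t))^{-p_0}(zI-H_j(t))^{-q_0}$, placed so that the expression reads $\Tril\big(V_i\,(zI-H_i(t))^{-p_1}\cdots V_j\,\cdots\big)$; rearranging via commutativity I reach a term of the form $\Tril\big(a(\Hop(t))\,V_i\,b(\Hop(t))\,V_j\big)$ where $a,b$ are bounded Borel functions on $\sigma(\Hop(t))$. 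Approximating $a,b$ by simple functions over a common Borel partition $(\delta_m)$ of $\sigma(\Hop(t))$ and inserting $E_t(\delta_m)$ spectral projections — exactly as in \eqref{Kr-Thm1-R4} but now with two projections — converts each such term into a limit of sums $\sum_{m_1,m_2}a(\lambda_{m_1})b(\lambda_{m_2})\,\Tril\big(E_t(\delta_{m_1})V_iE_t(\delta_{m_2})V_jE_t(\delta_{m_1})\big)$, where I use cyclicity to reinsert the idempotent $E_t(\delta_{m_1})$ on the right. Lemma \ref{Kp-Lem4} (with $E_1=E_2=E_t$) then bounds $\sum_{m_1,m_2}|\Tril(E_t(\delta_{m_1})V_iE_t(\delta_{m_2})V_jE_t(\delta_{m_1}))|$ by $\sum_{k=1}^{4}(\tau_{k,\il}(|V_i|^2))^{1/2}(\tau_{k,\il}(|V_j|^2))^{1/2}$, and the scalar coefficients are controlled by $\sup|a|\sup|b|$ summed over the combinatorial indices $p_0,p_1,q_0,q_1$; recognizing the total scalar sum as precisely $\big|f_{i,j}^{[2]}(\ldots)\big|$ via \eqref{Kp-Lem6-R1} and invoking the sup-norm bound \eqref{Kp-Lem5-R2} yields the factor $\big\|\frac{\partial^2 f}{\partial\lambda_i\partial\lambda_j}\big\|_{L^\infty(\Rn)}$.

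For the diagonal case $i=j$, the same scheme applies starting from \eqref{Kp-Lem1-R5} and the second-derivative expansion of $(zI-H_j(s))^{-k_j}$, which by \cite[Lemma 3.6]{DySk14} produces a sum over $p_0+p_1+p_2=k_j+2$ of terms $(zI-H_j(t))^{-p_0}V_j(zI-H_j(t))^{-p_1}V_j(zI-H_j(t))^{-p_2}$. Collecting the commuting resolvent factors as before gives terms $\Tril\big(a(\Hop(t))V_jb(\Hop(t))V_j\big)$, and the partition/Lemma~\ref{Kp-Lem4} argument (with $V_1=V_2=V_j$) gives $\sum_{k=1}^4\tau_{k,\il}(|V_j|^2)$; the scalar sum now matches $|f_{j}^{[2]}(\ldots)|$ via \eqref{Kp-Lem6-R2} and is bounded using \eqref{Kp-Lem5-R1} with $k=2$, giving $\frac12\big\|\frac{\partial^2 f}{\partial\lambda_j^2}\big\|_\infty\le\big\|\frac{\partial^2 f}{\partial\lambda_j^2}\big\|_\infty$, which is consistent with \eqref{Kp-Thm1-R1}.

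The main obstacle I anticipate is the bookkeeping needed to justify the rearrangement of the operator products into the canonical form $a(\Hop(t))\,V_i\,b(\Hop(t))\,V_j$ (or its diagonal analogue) and the accompanying passage to the spectral-projection sums in the unbounded setting: one must check that all the resolvent products genuinely commute (this is where the hypothesis $\Hop(t)\in\comn$ at the single value of $t$ enters), that the bounded functions $a,b$ arising are indeed Borel functions on the joint spectrum to which the joint spectral theorem applies, and that the limits in the simple-function approximation can be interchanged with $\Tril$ — which is legitimate because $\Tril$ is $\|\cdot\|_\il$-bounded and all the operators involved lie in $\il$ by \eqref{idealinequality} (each term carries two factors from $\ideal$). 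Once these points are in place, matching the scalar sums to the divided-difference formulas of Lemma \ref{Kp-Lem6} and applying the sup-norm bounds of Lemma \ref{Kp-Lem5} is routine.
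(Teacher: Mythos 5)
Your proposal is correct and follows essentially the same route as the paper's proof: reduce to monomials in $\mathfrak{R}$, expand $D_{H_{i},H_{j}}^{f}(t)$ via the resolvent-derivative formulas, use cyclicity and the commutativity of $\Hop(t)$ to discretize with the joint spectral measure, identify the resulting scalar sums with $f_{i,j}^{[2]}$ and $f_{j}^{[2]}$ via Lemma \ref{Kp-Lem6}, and conclude with Lemmas \ref{Kp-Lem5} and \ref{Kp-Lem4} (your use of one common joint spectral measure in place of the paper's partial product measures $E_{1,t},E_{2,t}$ is an equivalent bookkeeping choice). The only slip is in the diagonal case: the second derivative of $(zI-H_{j}(s))^{-k_{j}}$ carries a factor $2$, i.e.\ it equals $2\sum_{p_{0}+p_{1}+p_{2}=k_{j}+2}(zI-H_{j}(t))^{-p_{0}}V_{j}(zI-H_{j}(t))^{-p_{1}}V_{j}(zI-H_{j}(t))^{-p_{2}}$ (see \cite[Lemma 5.4 (i)]{DySk14}, not Lemma 3.6), and this factor is exactly absorbed by the $\frac{1}{2!}$ in \eqref{Kp-Lem5-R1}, so the final bound \eqref{Kp-Thm1-R1} comes out as stated.
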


\begin{proof}
Although the theorem is stated for $f\in\RRn$, we shall prove it only for $f\in\mathfrak{R}$ (cf. \eqref{Rational-without-span}) for notational simplicity. Let
\[f(\lambda_{1},\ldots,\lambda_{n})=(z-\lambda_{1})^{-k_{1}}\cdots(z-\lambda_{n})^{-k_{n}},\ \ \mbox{$\Im(z)\neq0$ and $k_{1},\ldots,k_{n}\in\N$}.\]  
From \eqref{Kp-Thm1-R1} and the Jordan decomposition of $\Tril$, it is straightforward to see that it suffices to prove \eqref{Kp-Thm1-R1} only for a positive trace $\Tril$.
\begin{case}\label{Kp-Thm1-Case1}
Consider the case $i<j$. Then from \eqref{Kp-Lem1-R4} and \cite[Lemma 3.6]{DySk14} we derive
\begin{align}
\nonumber&\Tril\left(D_{H_{i},H_{j}}^{f}(t)\right)\\
\nonumber=&\sum_{\substack {1\le p_{0},p_{1}\le k_{i}\\p_{0}+p_{1}=k_{i}+1}}~\sum_{\substack {1\le q_{0},q_{1}\le k_{j} \\ q_{0}+q_{1}=k_{j}+1}}\Tril\bigg\{\Big[\prod_{l=1}^{i-1}(zI-H_{l}(t))^{-k_{l}}\Big](zI-H_{i}(t))^{-p_{0}}V_{i}(zI-H_{i}(t))^{-p_{1}}\\
\nonumber&\times \Big[\prod_{l=i+1}^{j-1}(zI-H_{l}(t))^{-k_{l}}\Big](zI-H_{j}(t))^{-q_{0}}V_{j}(zI-H_{j}(t))^{-q_{1}}\Big[\prod_{l=j+1}^{n}(zI-H_{l}(t))^{-k_{l}}\Big]\bigg\}.
\end{align}
Now the cyclicity of the trace $\Tril$ and the pairwise commutativity of the tuple $\Hop(t)$ gives
\begin{align}
\nonumber&\Tril\left(D_{H_{i},H_{j}}^{f}(t)\right)\\
\nonumber=&\sum_{\substack {1\le p_{0},p_{1}\le k_{i} \\ p_{0}+p_{1}=k_{i}+1}}~\sum_{\substack {1\le q_{0},q_{1}\le k_{j}\\q_{0}+q_{1}=k_{j}+1}}\Tril\bigg\{\Big[\prod_{l=1}^{i-1}(zI-H_{l}(t))^{-k_{l}}\Big](zI-H_{i}(t))^{-p_{0}}(zI-H_{j}(t))^{-q_{1}}\\
\label{Kp-Thm1-R4}\times&\Big[\prod_{l=j+1}^{n}(zI-H_{l}(t))^{-k_{l}}\Big]V_{i}\,(zI-H_{i}(t))^{-p_{1}}\Big[\prod_{l=i+1}^{j-1}(zI-H_{l}(t))^{-k_{l}}\Big](zI-H_{j}(t))^{-q_{0}}V_{j}\bigg\}.
\end{align}
By the spectral theorem, for every $1\le l(\neq i,j)\le n$, there is a sequence of Borel partitions $\left(\delta_{m,l,\alpha_{l}}\right)_{1\le \alpha_{l}\le m}$ of $\R$ and a sequence of real numbers $\left(\lambda_{m,l,\alpha_{l}}\right)_{1\le \alpha_{l}\le m}$ such that
$$\sum_{1\le\alpha_{l}\le m}(z-\lambda_{m,l,\alpha_{l}})^{-k_{l}}E_{H_{l}(t)}(\delta_{m,l,\alpha_{l}})$$
converges in operator norm to $(zI-H_{l}(t))^{-k_{l}}$ as $m\to\infty$. And for $l\in\{i,j\}$, there is a sequence of Borel partitions $\left(\delta_{m,l,\alpha_{l}}\right)_{1\le \alpha_{l}\le m}$ of $\R$, a sequence of real numbers $\left(\lambda_{m,l,\alpha_{l}}\right)_{1\le \alpha_{l}\le m}$ such that for any $k\in\{1,\ldots,k_{l}+1\},$ the quantity 
\begin{align}\label{Kp-Thm1-R2}
\sum_{1\le\alpha_{l}\le m}(z-\lambda_{m,l,\alpha_{l}})^{-k}E_{H_{l}(t)}(\delta_{m,l,\alpha_{l}})
\end{align}
converges in operator norm to $(zI-H_{l}(t))^{-k}$ as $m\to\infty$. Here, $E_{H_{l}(t)}(\cdot)$ denotes the spectral measure associated with the self-adjoint operator $H_{l}(t)$ for $1\le l\le n$.
			
Since the tuple $(H_{1}(t),\ldots,H_{n}(t))$ is commuting, $$E_{t}(\delta_{1}\times\cdots\times\delta_{n}):=E_{H_{1}(t)}(\delta_{1})\times\cdots\times E_{H_{n}(t)}(\delta_{n})$$ 
defines a spectral measure. Consequently,
\begin{align}
\label{Kp-Thm1-R3}&\sum_{1\le\alpha_{1},\ldots,\alpha_{n}\le m}\Big[\prod_{l=1}^{n}(z-\lambda_{m,l,\alpha_{l}})^{-k_{l}}\Big]E_{t}(\delta_{m,1,\alpha_{1}}\times\cdots\times\delta_{m,n,\alpha_{n}})
\end{align}
converges in operator norm to $\left((zI-H_{1}(t))^{-k_{1}}\cdots(zI-H_{n}(t))^{-k_{n}}\right)$ as $m\to\infty$. 

Using \eqref{Kp-Thm1-R4}, \eqref{Kp-Thm1-R2} and \eqref{Kp-Thm1-R3} we now obtain
\begin{align}
\nonumber\Tril&\left(D_{H_{i},H_{j}}^{f}(t)\right)=\lim_{m\to\infty}\sum_{1\le\alpha_{1},\ldots,\alpha_{n},\alpha'_{i},\alpha'_{j}\,\le m}\,\sum_{\substack {1\le p_{0},p_{1}\le k_{i}\\p_{0}+p_{1}=k_{i}+1}}\,\sum_{\substack {1\le q_{0},q_{1}\le k_{j}\\q_{0}+q_{1}=k_{j}+1}}\bigg\{\Big[\prod_{l=1}^{i-1}(z-\lambda_{m,l,\alpha_{l}})^{-k_{l}}\Big]\\
\nonumber&\times (z-\lambda_{m,i,\alpha_{i}})^{-p_{0}}(z-\lambda_{m,i,\alpha'_{i}})^{-p_{1}}\Big[\prod_{l=i+1}^{j-1}(z-\lambda_{m,l,\alpha_{l}})^{-k_{l}}\Big](z-\lambda_{m,j,\alpha'_{j}})^{-q_{0}}(z-\lambda_{m,j,\alpha_{j}})^{-q_{1}}\\
\nonumber&\times \Big[\prod_{l=j+1}^{n}(z-\lambda_{m,l,\alpha_{l}})^{-k_{l}}\Big]\bigg\}\Tril\bigg\{E_{1,t}\left(\delta_{m,1,\alpha_{1}}\times\cdots\times\delta_{m,i,\alpha_{i}}\times\delta_{m,j,\alpha_{j}}\times\cdots\times\delta_{m,n,\alpha_{n}}\right)V_{i}\\
\label{Kp-Thm1-R5}&\times E_{2,t}\left(\delta_{m,i,\alpha'_{i}}\times\delta_{m,i+1,\alpha_{i+1}}\times\cdots\times\delta_{m,j-1,\alpha_{j-1}}\times\delta_{m,j,\alpha'_{j}}\right)V_{j}\bigg\},
\end{align}
where $E_{1,t}:=E_{H_{1}(t)}\times\cdots\times E_{H_{i}(t)}\times E_{H_{j}(t)}\times\cdots\times E_{H_{n}(t)}$ and $E_{2,t}:=E_{H_{i}(t)}\times E_{H_{i+1}(t)}\times\cdots\times E_{H_{j-1}(t)}\times E_{H_{j}(t)}.$ Applying \eqref{Kp-Lem6-R1} to equation \eqref{Kp-Thm1-R5} yields
\begin{align}
\nonumber&\Tril\left(D_{H_{i},H_{j}}^{f}(t)\right)=\lim_{m\to\infty}\sum_{1\le\alpha_{1},\ldots,\alpha_{n},\alpha'_{i},\alpha'_{j}\,\le m}\\
\nonumber&f_{i,j}^{[2]}\left(\lambda_{m,1,\alpha_{1}},\ldots,\lambda_{m,i-1,\alpha_{i-1}},\lambda_{m,i,\alpha_{i}},\lambda_{m,i,\alpha'_{i}},\ldots,\lambda_{m,j,\alpha'_{j}},\lambda_{m,j,\alpha_{j}},\lambda_{m,j+1,\alpha_{j+1}},\ldots,\lambda_{m,n,\alpha_{n}}\right)\\
\nonumber&\times\Tril\left\{E_{1,t}(\delta_{m,1,\alpha_{1}}\times\cdots\times\delta_{m,i,\alpha_{i}}\times\delta_{m,j,\alpha_{j}}\times\cdots\times\delta_{m,n,\alpha_{n}})V_{i}\right.\\
\label{Kp-Thm1-R6}&\hspace{4cm}\left. E_{2,t}(\delta_{m,i,\alpha'_{i}}\times\delta_{m,i+1,\alpha_{i+1}}\times\cdots\times\delta_{m,j-1,\alpha_{j-1}}\times\delta_{m,j,\alpha'_{j}})V_{j}\right\}.
\end{align}
Therefore, using \eqref{Kp-Lem5-R2} and Lemma \ref{Kp-Lem4}, the above equation \eqref{Kp-Thm1-R6} gives the following estimate:
$$\left|\Tril\left(D_{H_{i},H_{j}}^{f}(t)\right)\right|\le\left(\Tril(|V_{i}|^{2})\right)^{1/2}\,\left(\Tril(|V_{j}|^{2})\right)^{1/2}\cdot\left\|\frac{\partial^{2} f}{\partial \lambda_{i}\,\partial \lambda_{j}}\right\|_{L^{\infty}(\Rn)}.$$
This completes the proof for the case $i<j$.
\end{case}
	
\begin{case}\label{Kp-Thm1-Case2}
Next, assume $i=j$. Using \eqref{Kp-Lem1-R5} and \cite[Lemma 5.4 (i)]{DySk14} we obtain
\begin{align}
\nonumber&\Tril\left(D_{H_{i},H_{i}}^{f}(t)\right)=2\sum_{\substack {1\le p_{0},p_{1},p_{2}\le k_{i} \\ p_{0}+p_{1}+p_{2}=k_{i}+2}}\Tril\bigg\{\Big[\prod_{l=1}^{i-1}(zI-H_{l}(t))^{-k_{l}}\Big](zI-H_{i}(t))^{-p_{0}}V_{i}(zI-H_{i}(t))^{-p_{1}}V_{i}\\
\nonumber&\hspace{2.5in}\times (zI-H_{i}(t))^{-p_{2}}\Big[\prod_{l=i+1}^{n}(zI-H_{l}(t))^{-k_{l}}\Big]\bigg\}.
\end{align}
Again, the cyclicity of the trace $\Tril$ and the pairwise commutativity of the tuple $\Hop(t)$ imply
\begin{align}
\nonumber&\Tril\left(D_{H_{i},H_{i}}^{f}(t)\right)=2\sum_{\substack{1\le p_{0},p_{1},p_{2}\le k_{i} \\ p_{0}+p_{1}+p_{2}=k_{i}+2}}\Tril\bigg\{\Big[\prod_{l=1}^{i-1}(zI-H_{l}(t))^{-k_{l}}\Big](zI-H_{i}(t))^{-(p_{0}+p_{2})}\\
\nonumber&\hspace{2.4in}\times \Big[\prod_{l=i+1}^{n}(zI-H_{l}(t))^{-k_{l}}\Big]V_{i}\,(zI-H_{i}(t))^{-p_{1}}\,V_{i}\bigg\}.
\end{align}
Now, by the spectral theorem we have
\begin{align}
\nonumber&\Tril\left(D_{H_{i},H_{i}}^{f}(t)\right)=2\lim_{m\to\infty}\sum_{1\le\alpha_{1},\ldots,\alpha_{n},\alpha'_{i}\le m}\sum_{\substack {1\le p_{0},p_{1},p_{2}\le k_{i}\\p_{0}+p_{1}+p_{2}=k_{i}+2}}\hspace*{-0.2cm}\bigg\{\Big[\prod_{l=1}^{i-1}(z-\lambda_{m,l,\alpha_{l}})^{-k_{l}}\Big](z-\lambda_{m,i,\alpha_{i}})^{-(p_{0}+p_{2})}\\
\label{Kp-Thm1-R9}&\times (z-\lambda_{m,i,\alpha'_{i}})^{-p_{1}}\Big[\prod_{l=i+1}^{n}(z-\lambda_{m,l,\alpha_{l}})^{-k_{l}}\Big]\bigg\}\Tril\bigg\{E_{1,t}(\delta_{m,1,\alpha_{1}}\times\cdots\times\delta_{m,n,\alpha_{n}})\,V_{i}\,E_{2,t}(\delta_{m,i,\alpha'_{i}})\,V_{i}\bigg\},
\end{align}
where $E_{1,t}:=E_{H_{1}(t)}\times\cdots\times E_{H_{n}(t)}$ and $E_{2,t}:=E_{H_{i}(t)}$. Applying \eqref{Kp-Lem6-R2} to \eqref{Kp-Thm1-R9}, we further deduce
\begin{align}
\nonumber&\Tril\left(D_{H_{i},H_{i}}^{f}(t)\right)=\\
\nonumber&2\lim_{m\to\infty}\sum_{1\le\alpha_{1},\ldots,\alpha_{n},\alpha'_{i}\le m}f_{i}^{[2]}\left(\lambda_{m,1,\alpha_{1}},\ldots,\lambda_{m,i-1,\alpha_{i-1}},\lambda_{m,i,\alpha_{i}},\lambda_{m,i,\alpha'_{i}},\lambda_{m,i,\alpha_{i}},\lambda_{m,i+1,\alpha_{i+1}},\ldots,\lambda_{m,n,\alpha_{n}}\right)\\
\label{Kp-Thm1-R10}&\hspace*{4cm}\times\Tril\left(E_{1,t}\left(\delta_{m,1,\alpha_{1}}\times\cdots\times\delta_{m,n,\alpha_{n}}\right)\,V_{i}\,E_{2,t}(\delta_{m,i,\alpha'_{i}})\,V_{i}\right),
\end{align}
and therefore, applying \eqref{Kp-Lem5-R1} and Lemma \ref{Kp-Lem4} to \eqref{Kp-Thm1-R10}, we establish \eqref{Kp-Thm1-R1} for the case $i=j$. This completes the proof.
\end{case}
\end{proof}

We now establish the Koplienko trace formula for an $n$-tuple of commuting self-adjoint operators.

\begin{thm}\label{Kp-Thm2}
Assume Notation \ref{Notation}. Let $\Tril$ be a $\|\cdot\|_{\il}$-bounded trace on $\il$ with the Jordan decomposition given by $\Tril=\tau_{1,\il}-\tau_{2,\il}+i\tau_{3,\il}-i\tau_{4,\il}$, where $\tau_{k,\il}$ $(1\le k\le 4)$ are the positive components of $\Tril$. Let $H_{j}$ be self-adjoint operators in $\hil$ and $V_{j}=V_{j}^*\in\ideal$ for $j=1,\ldots,n$. If $\Hop(t)\in\comn$ for all $t\in[0,1]$, then there exist finite measures $\nu_{ij}$, $1\le i\le j\le n$, on $\Rn$ such that
\begin{align}
\label{Kp-Thm2-R1}&\|\nu_{ij}\|\le\frac{1}{2}\cdot\sum_{k=1}^{4}\left(\tau_{k,\il}(|V_{i}|^{2})\right)^{1/2}\,\left(\tau_{k,\il}(|V_{j}|^{2})\right)^{1/2},
\end{align}
and
\begin{align}
\label{Kp-Thm2-R2}\nonumber&\Tril\left\{f(\Hop(1))-f(\Hop)-\frac{d}{ds}\bigg|_{s=0}^{}f(\Hop(s))\right\}\\
\nonumber&\hspace*{1in}=2\sum_{1\le i<j\le n}\int_{\Rn}\frac{\partial^{2} f}{\partial \lambda_{i}\,\partial \lambda_{j}}(\lambda_{1},\ldots,\lambda_{n})\,d\nu_{ij}(\lambda_{1},\ldots,\lambda_{n})\\
&\hspace*{1in}\quad+\sum_{1\le j\le n}\int_{\Rn}\frac{\partial^{2} f}{\partial \lambda_{j}^{2}} (\lambda_{1},\ldots,\lambda_{n})\,d\nu_{jj}(\lambda_{1},\ldots,\lambda_{n}),
\end{align}
for every $f\in\RRn$.
\end{thm}

\begin{proof}
By Lemma \ref{Kp-Lem1} and Lemma \ref{Kp-Lem3}, we obtain
\begin{align*}
&\Tril\left\{f(\Hop(1))-f(\Hop)-\frac{d}{ds}\bigg|_{s=0}^{}f(\Hop(s))\right\}\\
&=2\sum_{1\le i<j\le n}\int_{0}^{1}(1-t)\Tril\left(D_{H_{i},H_{j}}^{f}(t)\right)dt+\sum_{1\le j\le n}\int_{0}^{1}(1-t)\Tril\left(D_{H_{j},H_{j}}^{f}(t)\right)dt.
\end{align*}
Then the collection of linear functionals $\Psi_{ij}:~\textup{span}\left\{\frac{\partial^{2}f}{\partial\lambda_{i}\,\partial\lambda_{j}}~:~f\in\RRn\right\}\to\C$, defined by
\begin{align*}
&\Psi_{ij}\left(\frac{\partial^{2} f}{\partial\lambda_{i}\,\partial \lambda_{j}}\right)=\int_{0}^{1}(1-t)\Tril\left(D_{H_{i},H_{j}}^{f}(t)\right)dt,\quad\text{for }~1\le i\le j\le n,
\end{align*}
is bounded by Theorem \ref{Kp-Thm1}. Moreover, for each $1\le i\le j\le n$,
\begin{flalign*}
&\left|\Psi_{ij}\left(\frac{\partial^{2} f}{\partial \lambda_{i}\,\partial \lambda_{j}}\right)\right|\le \frac{1}{2}\cdot\sum_{k=1}^{4}\left(\tau_{k,\il}(|V_{i}|^{2})\right)^{1/2}\,\left(\tau_{k,\il}(|V_{j}|^{2})\right)^{1/2}\left\|\frac{\partial^{2} f}{\partial \lambda_{i}\,\partial \lambda_{j}}\right\|_{L^{\infty}(\Rn)}.
\end{flalign*}
Therefore, by the Hahn-Banach theorem and the Riesz representation theorem for elements of $(C_{0}(\Rn))^*$, there exist finite measures
$\nu_{ij}$ (for $1\le i\le j\le n$) on $\Rn$ satisfying \eqref{Kp-Thm2-R1} and \eqref{Kp-Thm2-R2}. This completes the proof.
\end{proof}

\begin{thm}\label{Kp-Thm3}
Assume Notation \ref{Notation}. Let $\mathcal{M}_\psi$ be a Lorentz ideal, where the function $\psi$ satisfies \eqref{psi-growth-condition} for some $0<\varepsilon<1/3$. Let $\tau_{\psi}$ be a bounded singular trace on $\mathcal{M}_\psi$. Let $H_{j}$ be self-adjoint operators in $\hil$ and $V_{j}=V_{j}^*\in\mathcal{M}_\psi^{1/2}$ for $j=1,\ldots,n$. If $\Hop\in\comn$, then for every $f\in\RRn$,
\begin{align}
\label{Kp-Thm3-R1}&\tau_\psi\left\{f(\Hop(1))-f(\Hop)-\frac{d}{ds}\bigg|_{s=0}^{}f(\Hop(s))\right\}=\frac{1}{2}\tau_\psi\left(\frac{d^{2}}{ds^{2}}\bigg|_{s=0}^{}f(\Hop(s))\right),
\end{align}
and there exist finite measures $\nu_{ij}$, $1\le i\le j\le n$, on $\Rn$ such that \eqref{Kp-Thm2-R2} holds.
\end{thm}

\begin{proof}
The proof is analogous to the proof of Theorem \ref{Kr-Thm3}. Firstly one establishes
\begin{align}
\nonumber&\left(f(\Hop(1))-f(\Hop)-\frac{d}{ds}\bigg|_{s=0}^{}f(\Hop(s))-\frac{1}{2}\frac{d^{2}}{ds^{2}}\bigg|_{s=0}^{}f(\Hop(s))\right)\in\mathcal{M}_\psi^{3/2}.
\end{align}
Then, using \eqref{Dixmier-Property}, we obtain the identity \eqref{Kp-Thm3-R1}. Furthermore, from \eqref{Kp-Lem1-R3} we derive
\begin{align*}
\tau_\psi&\left\{f(\Hop(1))-f(\Hop)-\frac{d}{ds}\bigg|_{s=0}^{}f(\Hop(s))\right\}\\
&=\sum_{1\le i<j\le n}\tau_\psi\left(D_{H_{i},H_{j}}^{f}(0)\right)+\frac{1}{2}\sum_{1\le j\le n}\tau_\psi\left(D_{H_{j},H_{j}}^{f}(0)\right).
\end{align*}
Finally, by mimicking the proof of Theorem \ref{Kp-Thm2}, we complete the proof.
\end{proof}

\section{Trace formulas for maximal dissipative operators}\label{Sec5}
In this section, we derive the Krein and Koplienko trace formulas for a pair of maximal dissipative operators using the main results of Sections \ref{Sec3} and \ref{Sec4}. We begin with the following definition.

\begin{dfn}\label{D-Def}
A densely defined linear operator $L$ (possibly unbounded) in $\hil$ with domain $\text{Dom}(L)$ is called {\it dissipative} if $\Im\la Lh, h\ra\ge0$ for every $h\in\text{Dom}(L)$. It is called {\it maximal dissipative} if it does not have a proper dissipative extension.
\end{dfn}

Every dissipative operator has a maximal dissipative extension. Every maximal dissipative operator $L$ is necessarily closed and its spectrum $\sigma(L)$ is contained in the closed upper half-plane $\clos \C_{+}$. The Cayley transform of a maximal dissipative operator $L$ is defined by 
\begin{align*}
T\overset{\mathrm{def}}{=}(L-iI)(L+iI)^{-1}.
\end{align*}
Then $T$ is a contraction on $\hil$ and, importantly, $1$ is not an eigenvalue of $T$. The basic theory of dissipative operators can be found in \cite[Chap. IV, Sect. 4]{SzNF70}.

Let $L$ and $M$ be two maximal dissipative operators in $\hil$. We say that $L$ and $M$ commute if they commute in the resolvent sense, that is, if their resolvents $(L+iI)^{-1}$ and $(M+iI)^{-1}$ commute. It is easy to verify that $(L+iI)^{-1}$ and $(M+iI)^{-1}$ commute if and only if the Cayley transforms $(L-iI)(L+iI)^{-1}$ and $(M-iI)(M+iI)^{-1}$ of $L$ and $M$ commute (see, e.g., \cite[Theorem 3.5]{AlPe22}). 

Let us introduce the notion of resolvent self-adjoint dilation for a commuting tuple of maximal dissipative operators.

\begin{dfn}
Let $(L_{1},\ldots,L_{n})$ be an $n$-tuple of commuting maximal dissipative operators in $\hil$, and let $(X_{1},\ldots,X_{n})$ be an $n$-tuple of commuting self-adjoint operators in a Hilbert space $\mathcal{K}\supset\hil$. The operator tuple $(X_{1},\ldots,X_{n})$ is called a {\it resolvent self-adjoint dilation} of $(L_{1},\ldots,L_{n})$ if
\begin{align}\label{Dilation}
(zI-L_{i_{1}})^{-1}\cdots (zI-L_{i_{k}})^{-1}=P_{\hil}(zI-X_{i_{1}})^{-1}\cdots (zI-X_{i_{k}})^{-1}\big|_{\hil},\ \ \Im(z)<0,
\end{align}
for all $k\ge1$ and all choices of indices $1\le i_{1},\ldots,i_{k}\le n$. Here $P_{\hil}$ denotes the orthogonal projection from $\mathcal{K}$ onto $\hil$.
\end{dfn}

\begin{rmrk}
Note that for $n>2$, an $n$-tuple of commutative maximal dissipative operators (contractions) does
not always admit an $n$-tuple of commutative resolvent self-adjoint (unitary) dilations \cite[p. 23]{SzNF70}, whereas for $n=2$ such dilations always exist (see \cite[Chap. I, Sec. 6, Thm. 6.4]{SzNF70}). Henceforth, in this section we fix $n=2$.
\end{rmrk}

We introduce several notations. Let $\rcomn$ denote the class of all pairs $(L_{1},L_{2})$ of maximal dissipative operators in $\hil$, with commuting resolvents. Let $\mathfrak{R}^{-}$ denote the set of functions
\begin{align}\label{Rational-without-span-}
\left\{ (\lambda_{1},\lambda_{2})\in\R^{2}\mapsto(z-\lambda_{1})^{-k_{1}}(z-\lambda_{2})^{-k_{2}}:\,\,\Im(z)<0,\,k_{1},k_{2}\in\N\right\}.
\end{align}
Note that the class $\mathfrak{R}(\mathbb R^2)^{-}$, defined in \eqref{rationallower}, coincide with the linear span of $\mathfrak{R}^{-}$.

For $f\in\mathfrak{R}^{-}$, the corresponding operator function associated with a pair of maximal dissipative operators $(L_{1},L_{2})$ (not necessarily commutative) is defined by
$$f(L_{1},L_{2})=(zI-L_{1})^{-k_{1}}(zI-L_{2})^{-k_{2}}.$$
We adhere to the notational conventions established in Notation \ref{Notation}. Moreover, for a given maximal dissipative operator $L$ and a bounded perturbation operator $V$, we set
\[L(t):=L+tV,\ \ t\in[0,1].\]
Note that $L(t)$ is maximal dissipative for each $t\in[0,1]$ whenever both $L$ and $L+V$ are maximal dissipative operators (cf. \cite[Lemma 4.1]{AlPe11}).

\subsection{Krein trace formula for maximal dissipative operators}
In the preceding sections, we derived trace formulas for self-adjoint operators with perturbations in arbitrary symmetrically normed ideals under Hypothesis \ref{Assumption}. These results rely on the assumption that the perturbed tuple $\Hop(t)$ is commutative for all $t\in[0,1]$. On a technical level, extending these results to maximal dissipative operators requires assuming that the perturbed pair $(L_{1}(t),L_{2}(t))\in\rcomn$ for all $t\in[0,1]$. Since no natural or representative examples supporting this assumption are known to authors, we find it very restrictive.

However, if we limit our considerations to perturbations belonging to the Lorentz ideals, then it is sufficient to assume $(L_{1}(t),L_{2}(t))\in\rcomn$ only at the initial point $t=0$. Therefore we prove our main results in this section for perturbations in the Lorentz ideals. Note, however, that using a similar method, the results can be established for any symmetrically normed ideal under more restrictive assumptions (see Remark \ref{D-Rmrk2}).

\smallskip

We now proceed to establish the Krein trace formula for a pair of maximal dissipative operators $(L_{1},L_{2})$ with perturbations belonging to the Lorentz ideal $\mathcal{M}_{\psi}$, where $\mathcal{M}_{\psi}$ is endowed with a singular trace $\tau_{\psi}$ bounded with respect to $\|\cdot\|_{\mathcal{M}_{\psi}}$.

\begin{thm}\label{D-Thm1}
Let $\psi:(0,\infty)\to(0,\infty)$ be a concave function such that $\lim_{t\to\infty} \psi(t)=\infty$ and the condition \eqref{psi-growth-condition} is satisfied  for some $0<\varepsilon<1/2$. Let $\mathcal{M}_\psi$ be the corresponding Lorentz ideal and let $\tau_{\psi}$ be a bounded singular trace on $\mathcal{M}_\psi$. Let $V_{1},V_{2}\in\mathcal{M}_\psi$, and let $L_{1},L_{2}$ be two maximal dissipative operators in $\hil$ such that $L_{1}+V_{1}$ and $L_{2}+V_{2}$ are also maximal dissipative. If $(L_{1},L_{2})\in\rcomn$, then there exist finite measures $\mu_{1},\mu_{2}$ on $\R^{2}$  such that 
\begin{align}
\label{D-Thm1-R2}&\tau_{\psi}\left\{f(L_{1}+V_{1},L_{2}+V_{2})-f(L_{1},L_{2})\right\}=\sum_{j=1}^{2}\int_{\R^{2}}\frac{\partial f}{\partial \lambda_{j}}(\lambda_{1},\lambda_{2})\,d\mu_{j}(\lambda_{1},\lambda_{2}),
\end{align}
for every $f\in\mathfrak{R}(\R^{2})^-$.
\end{thm}

\begin{proof}
In order to prove \eqref{D-Thm1-R2} for all such $f\in\mathfrak{R}(\R^{2})^-$, it will be enough to consider $f\in\mathfrak{R}^-$ (see \eqref{rationallower} and \eqref{Rational-without-span-} resp.). Let
\[f(\lambda_{1},\lambda_{2})=(z-\lambda_{1})^{-k_{1}}(z-\lambda_{2})^{-k_{2}},\ \ \mbox{$\Im(z)<0$ and $k_{1},k_{2}\in\N$}.\]
Similarly to the proof of Theorem \ref{Kr-Thm3}, we establish the following identity:
\begin{align*}
\tau_{\psi}\left\{f(L_{1}+V_{1},L_{2}+V_{2})-f(L_{1},L_{2})\right\}=\tau_{\psi}\left(\frac{d}{dt}\bigg|_{t=0}f(L_{1}(t),L_{2}(t))\right).
\end{align*}
From Lemma \ref{Kp-Lem1} (see Remark \ref{Kp-Rmrk2}) we further obtain 
\begin{align}\label{D-Thm1-R3}
\tau_{\psi}\left\{f(L_{1}+V_{1},L_{2}+V_{2})-f(L_{1},L_{2})\right\}=\sum_{j=1}^{2}\tau_{\psi}\left(D^{f}_{L_{j}}(0)\right),
\end{align}
where
\begin{align}\label{D-Thm1-R4}
D^{f}_{L_{j}}(0)=\sum_{\substack{1\le p_{0}^{j},p_{1}^{j}\le k_{j}\\p_{0}^{j}+p_{1}^{j}=k_{j}+1}}\Big[\prod_{l=1}^{j-1}(zI-L_{l})^{-k_{l}}\Big](zI-L_{j})^{-p_{0}^{j}}V_{j}(zI-L_{j})^{-p_{1}^{j}}\Big[\prod_{l=j+1}^{2}(zI-L_{l})^{-k_{l}}\Big].
\end{align}
Since $(L_{1},L_{2})\in\rcomn$, from \eqref{D-Thm1-R3} and \eqref{D-Thm1-R4} we obtain
\begin{align}\label{D-Thm1-R5}
\tau_{\psi}\left\{f(L_{1}+V_{1},L_{2}+V_{2})-f(L_{1},L_{2})\right\}=\sum_{j=1}^{2}\tau_{\psi}\left(V_{j}\frac{\partial f}{\partial\lambda_{j}}(L_{1},L_{2})\right).
\end{align}

Let $(X_{1},X_{2})\in\text{Com}_{2}$ be the resolvent self-adjoint dilation of the tuple $(L_{1},L_{2})$ on a Hilbert space $\mathcal{K}\supset\hil$ such that \eqref{Dilation} holds. Then
\begin{align}\label{D-Thm1-R6}
\frac{\partial f}{\partial \lambda_{j}}(L_{1},L_{2})=P_{\hil}\frac{\partial f}{\partial \lambda_{j}}(X_{1},X_{2})\bigg|_{\hil},\ \ j=1,2,
\end{align}
where $P_{\hil}$ is a projection from $\mathcal{K}$ onto $\hil$.

Let $\tau_{\psi}=\tau_{1,\psi}-\tau_{2,\psi}+i\tau_{3,\psi}-i\tau_{4,\psi}$ be the Jordan decomposition of $\tau_{\psi}:\mathcal{M}_{\psi}\to\C$, where $\tau_{i,\psi}$ for $1\le i\le 4$ are positive bounded traces (see, e.g., \cite[Theorem 4.2.2]{LoSuZabook1ed}). Then, by \cite[Proposition 2.3]{Sk15}, there exists an ideal $\mathcal{I}\subseteq\mathcal{B}(\mathcal{K})$ with the ideal norm $\|\cdot\|_{\mathcal{I}}$ and bounded positive traces $\tau_{i,\mathcal{I}}$ for $1\le i\le 4$ on $\mathcal{I}$, whose restriction to $\mathcal{M}_{\psi}$ coincide with $\tau_{i,\psi}$ for $1\le i\le 4$. Define 
\begin{align*}
\tau_{\mathcal{I}}=\tau_{1,\mathcal{I}}-\tau_{2,\mathcal{I}}+i\tau_{3,\mathcal{I}}-i\tau_{4,\mathcal{I}}.
\end{align*}
Then $\tau_{\mathcal{I}}$ is also a bounded trace on $\mathcal{I}$ whose restriction to $\mathcal{M}_{\psi}$ coincides with $\tau_{\psi}$.
	
By the spectral theorem and Lemma \ref{Kr-Lem4}, we have
\begin{align}
\nonumber\left|\tau_{\mathcal{I}}\left(\frac{\partial f}{\partial \lambda_{j}}(X_{1},X_{2})V_{j}\right)\right|&\le(\tau_{1,\mathcal{I}}+\tau_{2,\mathcal{I}}+\tau_{3,\mathcal{I}}+\tau_{4,\mathcal{I}})\left(|\Re(V_{j})|+|\Im(V_{j})|\right)\left\|\frac{\partial f}{\partial \lambda_{j}}\right\|_{L^{\infty}(\R^{2})}\\
\label{D-Thm1-R7}&=(\tau_{1,\psi}+\tau_{2,\psi}+\tau_{3,\psi}+\tau_{4,\psi})\left(|\Re(V_{j})|+|\Im(V_{j})|\right)\left\|\frac{\partial f}{\partial \lambda_{j}}\right\|_{L^{\infty}(\R^{2})}.
\end{align}
Hence
\begin{eqnarray}
\nonumber\left|\tau_{\psi}\left(V_{j}\frac{\partial f}{\partial\lambda_{j}}(L_{1},L_{2})\right)\right|&\stackrel{\eqref{D-Thm1-R6}}{=}&\left|\tau_{\psi}\left(P_{\hil}\,\frac{\partial f}{\partial \lambda_{j}}(X_{1},X_{2})\,V_{j}\right)\right|\\
\nonumber&\stackrel{\scriptsize\mbox{\cite[Prop. 2.3]{Sk15}}}{=}&\left|\tau_{\mathcal{I}}\left(\frac{\partial f}{\partial \lambda_{j}}(X_{1},X_{2})V_{j}\right)\right|\\
\label{D-Thm1-R8}&\stackrel{\eqref{D-Thm1-R7}}{\le}&\sum_{k=1}^{4}\tau_{k,\psi}\left(|\Re(V_{j})|+|\Im(V_{j})|\right)\left\|\frac{\partial f}{\partial \lambda_{j}}\right\|_{L^{\infty}(\R^{2})}.
\end{eqnarray}
Finally, from \eqref{D-Thm1-R5} and the estimate \eqref{D-Thm1-R8}, by following the same steps as in the proof of Theorem \ref{Kr-Thm2}, we conclude the proof.
\end{proof}

\begin{rmrk}\label{D-Rmrk1}
The proof above indicates that the assumptions of Theorem \ref{Kr-Thm3} can be relaxed. If one assumes $f\in\mathfrak{R}(\Rn)$ (cf. \eqref{rational}) instead of $f\in\mathcal{W}_{2}(\Rn)$, then the conclusion \eqref{Kr-Thm3-R3} holds for $\Hop\in\comn$ without requiring $\Hop(1)$ to be commutative. The original function class $\mathcal{W}_{2}(\Rn)$ was not regular enough to guarantee that $f(\Hop(1))$ is well-defined, unless one assumes $\Hop(1)$ is commutative.
\end{rmrk}

\subsection{Koplienko trace formula for maximal dissipative operators}
We now establish the Koplienko trace formula for a pair of maximal dissipative operators.

\begin{thm}\label{D-Thm2}
Let $\psi:(0,\infty)\to(0,\infty)$ be a concave function such that $\lim_{t\to\infty} \psi(t)=\infty$ and the condition \eqref{psi-growth-condition} is satisfied  for some $0<\varepsilon<1/3$. Let $\mathcal{M}_\psi$ be the corresponding Lorentz ideal and let $\tau_{\psi}$ be a bounded singular trace on $\mathcal{M}_\psi$. Let $V_{1},V_{2}\in\mathcal{M}_\psi^{1/2}$, and let $L_{1},L_{2}$ be two maximal dissipative operators in $\hil$ such that $L_{1}+V_{1}$ and $L_{2}+V_{2}$ are also maximal dissipative. If $(L_{1},L_{2})\in\rcomn$, then there exist finite measures $\nu_{11},\nu_{12},\nu_{22}$ on $\R^{2}$  such that 
\begin{align}
\nonumber&\tau_{\psi}\left\{f(L_{1}+V_{1},L_{2}+V_{2})-f(L_{1},L_{2})-\frac{d}{ds}\bigg|_{s=0}^{}f(L_{1}(s),L_{2}(s))\right\}\\
\label{D-Thm2-R1}=&2\int_{\R^{2}}\frac{\partial^{2} f}{\partial \lambda_{1}\,\partial \lambda_{2}}(\lambda_{1},\lambda_{2})\,d\nu_{12}(\lambda_{1},\lambda_{2})+\sum_{j=1}^{2}\int_{\R^{2}}\frac{\partial^{2} f}{\partial\lambda_{j}^{2}} (\lambda_{1},\lambda_{2})\,d\nu_{jj}(\lambda_{1},\lambda_{2}),
\end{align}
for every $f\in\mathfrak{R}(\R^{2})^-$.
\end{thm}

\begin{proof}
It is enough to consider $f\in\mathfrak{R}^{-}$ (cf. \eqref{Rational-without-span-}). Let
\[f(\lambda_{1},\lambda_{2})=(z-\lambda_{1})^{-k_{1}}(z-\lambda_{2})^{-k_{2}},\ \ \mbox{$\Im(z)<0$ and $k_{1},k_{2}\in\N$}.\] 
Similarly to the proof of Theorem \ref{Kp-Thm3}, we first establish the following identity:
\begin{align*}
\tau_\psi\left\{f(L_{1}(1),L_{2}(1))-f(L_{1},L_{2})-\frac{d}{ds}\bigg|_{s=0}^{}f(L_{1}(s),L_{2}(s))\right\}=\frac{1}{2}\tau_\psi\left(\frac{d^{2}}{ds^{2}}\bigg|_{s=0}^{}f(L_{1}(s),L_{2}(s))\right).
\end{align*}
From Lemma \ref{Kp-Lem1} (see also Remark \ref{Kp-Rmrk2}) this further implies
\begin{align}
\nonumber&\tau_\psi\left\{f(L_{1}(1),L_{2}(1))-f(L_{1},L_{2})-\frac{d}{ds}\bigg|_{s=0}^{}f(L_{1}(s),L_{2}(s))\right\}\\
\label{D-Thm2-R2}=&\tau_{\psi}\left(D_{L_{1},L_{2}}^{f}(0)\right)dt+\frac{1}{2}\sum_{j=1}^{2}\tau_{\psi}\left(D_{L_{j},L_{j}}^{f}(0)\right),
\end{align}
where $D^{f}_{L_{1},L_{1}}(0),D^{f}_{L_{1},L_{2}}(0)$ and $D^{f}_{L_{2},L_{2}}(0)$ are given by \eqref{Kp-Lem1-R4} and \eqref{Kp-Lem1-R5}. Moreover, from \eqref{Kp-Lem1-R4} and the cyclity of the trace $\tau_{\psi}$, we derive
\begin{align}
\nonumber&\tau_{\psi}\left(D^{f}_{L_{1},L_{2}}(0)\right)\\
\label{D-Thm2-R3}=&\sum_{\substack {1\le p_{0},p_{1}\le k_{1} \\ p_{0}+p_{1}=k_{1}+1}}~\sum_{\substack {1\le q_{0},q_{1}\le k_{2} \\ q_{0}+q_{1}=k_{2}+1}}\tau_{\psi}\left[(zI-L_{1})^{-p_{0}}(zI-L_{2})^{-q_{1}}V_{1}(zI-L_{1})^{-p_{1}}(zI-L_{2})^{-q_{0}}V_{2}\right].
\end{align}

Let $\tau_{\psi}=\tau_{1,\psi}-\tau_{2,\psi}+i\tau_{3,\psi}-i\tau_{4,\psi}$ be the Jordan decomposition of $\tau_{\psi}:\mathcal{M}_{\psi}\to\C$, where $\tau_{i,\psi}$ for $1\le i\le 4$ are positive bounded traces. Again, according to \cite[Proposition 2.3]{Sk15}, there exists an ideal $\mathcal{I}\subset\mathcal{B}(\mathcal{K})$ and a bounded trace $\tau_{\mathcal{I}}$ on $\mathcal{I}$ such that
\[\tau_{\mathcal{I}}=\tau_{1,\mathcal{I}}-\tau_{2,\mathcal{I}}+i\tau_{3,\mathcal{I}}-i\tau_{4,\mathcal{I}}\]
(as described in the proof of Theorem \ref{D-Thm1}) and the restrictions of $\tau_{i,\mathcal{I}}$ to $\mathcal{M}_{\psi}$ coincide with $\tau_{i,\psi}$. 

Let $(X_{1},X_{2})\in\text{Com}_{2}$ be the resolvent self-adjoint dilation of the tuple $(L_{1},L_{2})$ on a Hilbert space $\mathcal{K}\supset\hil$ such that \eqref{Dilation} holds with a projection $P_{\hil}$. Therfore \eqref{D-Thm2-R3} yields
\begin{align}
\nonumber&\tau_{\psi}\left(D_{L_{1},L_{2}}^{f}(0)\right)\\
\label{D-Thm2-R5}=&\sum_{\substack{1\le p_{0},p_{1}\le k_{1}\\p_{0}+p_{1}=k_{1}+1}}\sum_{\substack {1\le q_{0},q_{1}\le k_{2}\\q_{0}+q_{1}=k_{2}+1}}\tau_{\mathcal{I}}\big\{P_{\hil}(zI-X_{1})^{-p_{0}}(zI-X_{2})^{-q_{1}}V_{1}\,P_{\hil}(zI-X_{1})^{-p_{1}}(zI-X_{2})^{-q_{0}}\,V_{2}\big\}.
\end{align}   

Observe that $V_{1}P_{\hil},V_{2}P_{\hil}\in\mathcal{B}(\mathcal{K})\cap\mathcal{I}$. Therefore as in the proof of Theorem \ref{Kp-Thm1} ({\it Case} 1), we obtain from \eqref{D-Thm2-R5} that
\begin{align}
\nonumber\left|\tau_{\psi}\left(D_{L_{1},L_{2}}^{f}(0)\right)\right|&\le\sum_{k=1}^{4}\left(\tau_{k,\mathcal{I}}(|V_{1}P_{\hil}|^{2})\right)^{1/2}\,\left(\tau_{k,\mathcal{I}}(|V_{2}P_{\hil}|^{2})\right)^{1/2}\,\,\left\|\frac{\partial^{2} f}{\partial\lambda_{1}\,\partial \lambda_{2}}\right\|_{L^{\infty}(\R^{2})}\\
\label{D-Thm2-R4}&\le\sum_{k=1}^{4}\left(\tau_{k,\psi}(|V_{1}|^{2})\right)^{1/2}\,\left(\tau_{k,\psi}(|V_{2}|^{2})\right)^{1/2}\,\,\left\|\frac{\partial^{2} f}{\partial\lambda_{1}\,\partial \lambda_{2}}\right\|_{L^{\infty}(\R^{2})}.
\end{align}
Following the same method as above, we obtain estimates for $\left|\tau_{\psi}\left(D_{L_{j},L_{j}}^{f}(0)\right)\right|$ for $j=1,2$. The proof is completed by combining equation \eqref{D-Thm2-R2} with the estimate in \eqref{D-Thm2-R4}, following the concluding argument of Theorem \ref{Kp-Thm2}.
\end{proof}

\begin{rmrk}\label{D-Rmrk2}
Let $\il$ be a symmetrically normed ideal endowed with a bounded trace $\tau_{\il}:\il\to\C$. Let $V_{1},V_{2}\in\mathcal{B}(\hil)$, and let $L_{1},L_{2}$ be two maximal dissipative operators in $\hil$ such that $L_{1}+V_{1}$ and $L_{2}+V_{2}$ are also maximal dissipative. Assume $(L_{1}+tV_{1},L_{2}+tV_{2})\in\rcomn$ for all $t\in[0,1]$.
 
Using methods analogous to Theorems \ref{Kr-Thm2} and \ref{Kp-Thm2}, along with the proofs of Theorems \ref{D-Thm1} and \ref{D-Thm2}, one can establish the following extensions:
\begin{enumerate}[{\normalfont(i)}]
\item Under Hypothesis \ref{Assumption}(a), the identity \eqref{D-Thm1-R2} holds for perturbations in $\il$, with $\tau_{\psi}$ replaced by $\tau_{\il}$.
\item Under the Hypothesis \ref{Assumption}, the identity \eqref{D-Thm2-R1} holds for perturbations in the root ideal $\ideal$, with $\tau_{\psi}$ replaced by $\tau_{\il}$.
\end{enumerate} 
\end{rmrk}

We conclude this section with the following example.

\begin{xmpl}\label{hardy}
Let $\hil=H^{2}(\cir)$, the Hardy space over the unit circle $\cir$ (see \cite[Section 4.2 and Theorem 4.3]{MashreghiBook}). For $z\in\cir\backslash\{1\}$, consider the pair of shift operators $(M_{z}, M_{z^{2}})$ over $\hil$. Define maximal dissipative operators
$$L_{k}=i(I_{\hil}+M_{z^{k}})(I_{\hil}-M_{z^{k}})^{-1},\ \ \ k=1,2.$$
By \cite[Theorem 3.5]{AlPe22}, $L_{1}$ and $L_{2}$ commute in the resolvent sense.

For $z\in\C$ with $\Im(z)<0$ and $k_{1},k_{2}\in\N$, standard calculations yield (see, e.g., \cite[p. 1115]{DySk14}):
\begin{align}\label{D-Exm-R1}
(zI_{\hil}-L_{1})^{-k_{1}}=\sum_{n=0}^{\infty}w_{n}(k_{1},z)M_{z}^{n},\ \ (zI_{\hil}-L_{2})^{-k_{2}}=\sum_{n=0}^{\infty}w_{n}(k_{2},z)M_{z^{2}}^{n}
\end{align}
where the coefficients $w_{n}(k_{i},z)\in\C$ are such that $\sum_{n=0}^{\infty}\left|w_{n}(k_{i},z)\right|<\infty$ for $i=1,2$.

Now, let $(U_{1},U_{2})$ be a commuting unitary dilation of $(M_{z},M_{z^{2}})$ on a Hilbert space $\mathcal{K}\supset\hil$ satisfying
\begin{align}\label{D-Exm-R2}
M_{z}^{j}M_{z^{2}}^{k}=P_{\hil}U_{1}^{j}U_{2}^{k}\big|_{\hil},\ \ j,k\ge0,
\end{align}
where $P_{\hil}$ denotes the orthogonal projection from $\mathcal{K}$ onto $\hil$. Without loss of generality we may assume that $1\notin\sigma_{p}(U_{1})$ and $1\notin\sigma_{p}(U_{2})$ (see \cite[p. 1047]{AlPe22}).

Define the (possibly unbounded) self-adjoint operators
\[X_{k}=i(I_{\mathcal{K}}+U_{k})(I_{\mathcal{K}}-U_{k})^{-1},\ k=1,2.\]
Since $U_{1}$ and $U_{2}$ commute, it follows analogously that $X_{1}$ and $X_{2}$ commute in the resolvent sense. By \cite[Theorem VIII.13]{BarryBook}, this further implies that $(X_{1},X_{2})\in\text{Com}_{2}$. 
Moreover, we have
\begin{align}\label{D-Exm-R3}
(zI_{\mathcal{K}}-X_{1})^{-k_{1}}=\sum_{n=0}^{\infty}w_{n}(k_{1},z)U_{1}^{n},\ \ (zI_{\mathcal{K}}-X_{2})^{-k_{2}}=\sum_{n=0}^{\infty}w_{n}(k_{2},z)U_{2}^{n}.
\end{align}
Finally, from \eqref{D-Exm-R1}, \eqref{D-Exm-R2} and \eqref{D-Exm-R3} we see that $(L_{1},L_{2})\in\rcomn$ admits a commuting resolvent self-adjoint dilation $(X_{1},X_{2})$ satisfying \eqref{Dilation}.
\end{xmpl}

\section{Weaker trace formulas}\label{Sec6}
In Sections \ref{Sec3} and \ref{Sec4}, we established the  Krein trace formula (Theorem \ref{Kr-Thm2}) and the Koplienko trace formula (Theorem \ref{Kp-Thm2}) for perturbations in the symmetrically normal ideal $\il$ and its root ideal $\ideal$, respectively. The main tool in these proofs is the joint spectral theorem for the $n$-tuple $\Hop(t)$, which in turn requires $\Hop(t)$ to be commutative for all $t\in[0,1]$.

Recall Notation \ref{Notation}. In the present section, we establish weaker variants of these trace formulas (see \eqref{P-R5} and \eqref{P-R7}), now assuming only that the tuples $\Hop,\Hop(1)\in\comn$. The proof makes full use of the theory of multiple operator integrals discussed in Section \ref{Sec2} and does not depend on the joint spectral theorem for $\Hop(t)$ for $t\in[0,1]$.

To proceed, we need following two elementary lemmas.

\begin{lma}\label{P-Lem1}
Let $n\in\N$ and $f\in C_c^{n}((a,b)^n)$. Then for $1\le i_{1}<\cdots<i_{l}\le n$, we have
\begin{align}
\nonumber&\left\|\frac{\partial^{l}f}{\partial \lambda_{i_{1}}\cdots\partial \lambda_{i_{l}}}\right\|_{\infty}\le (b-a)^{n-l}\left\|\frac{\partial^{n}f}{\partial \lambda_{1}\cdots\partial \lambda_{n}}\right\|_{\infty},\quad l=0,\ldots,n.
\end{align}
\end{lma}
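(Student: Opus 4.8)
The statement is a purely pointwise estimate on a compactly supported $C^n$ function, so the natural approach is to recover each lower-order partial derivative from the top-order one by integrating in the ``missing'' variables, exploiting the compact support in $(a,b)^n$. I will induct on the number $n-l$ of variables not appearing in the derivative $\frac{\partial^{l}f}{\partial \lambda_{i_1}\cdots\partial\lambda_{i_l}}$; equivalently, I will peel off one variable at a time. The key observation is the following one-variable fact: if $g\in C_c^1((a,b))$ (as a function of one coordinate, with the others frozen), then $g(\lambda)=\int_a^{\lambda}g'(s)\,ds$, whence $\|g\|_{\infty}\le (b-a)\,\|g'\|_{\infty}$.

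\textbf{Key steps.} First I would fix the multi-index and, without loss of generality (relabelling coordinates), assume $\{i_1,\ldots,i_l\}=\{1,\ldots,l\}$, so that the ``missing'' variables are $\lambda_{l+1},\ldots,\lambda_n$. Next, set $g:=\frac{\partial^{n}f}{\partial\lambda_1\cdots\partial\lambda_n}$. Because $f$ has closed support inside $(a,b)^n$, for each fixed value of $(\lambda_1,\ldots,\lambda_{l})$ and of $(\lambda_{l+2},\ldots,\lambda_n)$ the function $\lambda_{l+1}\mapsto \frac{\partial^{n-1}f}{\partial\lambda_1\cdots\partial\lambda_{l}\,\partial\lambda_{l+2}\cdots\partial\lambda_{n}}$ lies in $C_c^1((a,b))$ and vanishes near the endpoints; applying the fundamental theorem of calculus in the variable $\lambda_{l+1}$ gives
\begin{align*}
\frac{\partial^{n-1}f}{\partial\lambda_1\cdots\partial\lambda_{l}\,\partial\lambda_{l+2}\cdots\partial\lambda_{n}}(\vec{\lambda})=\int_{a}^{\lambda_{l+1}} g(\lambda_1,\ldots,\lambda_{l},s,\lambda_{l+2},\ldots,\lambda_n)\,ds,
\end{align*}
hence
\begin{align*}
\left\|\frac{\partial^{n-1}f}{\partial\lambda_1\cdots\partial\lambda_{l}\,\partial\lambda_{l+2}\cdots\partial\lambda_{n}}\right\|_{\infty}\le (b-a)\,\|g\|_{\infty}=(b-a)\left\|\frac{\partial^{n}f}{\partial\lambda_1\cdots\partial\lambda_n}\right\|_{\infty}.
\end{align*}
Then I would iterate this: the function on the left is again compactly supported in $(a,b)^n$ and is itself a top-to-bottom derivative in the remaining $n-1$ coordinates, so I may integrate out $\lambda_{l+2}$, then $\lambda_{l+3}$, \dots, then $\lambda_n$, picking up a factor $(b-a)$ each time. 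After $n-l$ such steps I arrive at $\frac{\partial^{l}f}{\partial\lambda_{1}\cdots\partial\lambda_{l}}$ with the bound $(b-a)^{n-l}\bigl\|\frac{\partial^{n}f}{\partial\lambda_1\cdots\partial\lambda_n}\bigr\|_{\infty}$, which is exactly the claim. The endpoint cases $l=0$ and $l=n$ are covered by the same argument (for $l=n$ the bound is an equality with no integration performed).

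\textbf{Main obstacle.} There is essentially no analytic difficulty here; the only point requiring a little care is bookkeeping: one must check that at each stage the partial-derivative function being integrated still has compact support in $(a,b)^n$ and is $C^1$ in the coordinate of integration, so that the boundary term in the fundamental theorem of calculus vanishes. This follows immediately because differentiating $f$ in some of its variables does not enlarge the support, and $f\in C_c^n$ guarantees all the derivatives in play are continuous. Thus the induction goes through cleanly, and the proof is a routine finite iteration of the one-dimensional estimate $\|g\|_\infty\le (b-a)\|g'\|_\infty$ for compactly supported $g$.
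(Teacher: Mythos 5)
Your argument is correct: since $f\in C_c^{n}((a,b)^n)$ all mixed partials up to order $n$ are continuous (so they commute), every derivative of $f$ retains compact support in $(a,b)^n$, and iterating the one-dimensional estimate $\|g\|_\infty\le(b-a)\|g'\|_\infty$ over the $n-l$ missing variables gives exactly the stated bound. The paper offers no proof of this lemma (it is labelled elementary), and your fundamental-theorem-of-calculus iteration is precisely the standard argument the authors evidently have in mind.
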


\begin{lma}\label{P-Lem2}
Let $n\in\N$. Then $C_c^{n+1}((a,b)^n)\subseteq\Wn$.
\end{lma}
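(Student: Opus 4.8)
The plan is to check directly that each $f\in C_c^{n+1}((a,b)^n)$ meets every membership requirement in the definition \eqref{wiener} of $\Wn$. Since $f$ has compact support and is $(n+1)$-times continuously differentiable, all of its partial derivatives of total order at most $n+1$ are continuous with compact support, hence lie in $L^1(\Rn)\cap L^2(\Rn)$. In particular $f\in C^1(\Rn)$ and $\mathcal{D}^k f\in L^1(\Rn)$ for $k=0,1$, so the only thing left to verify is $\widehat f\in L^1(\Rn)$ and $\widehat{\partial f/\partial\lambda_j}\in L^1(\Rn)$ for $1\le j\le n$.

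For the first of these I would combine the identity $\widehat{\mathcal{D}^{\alpha}f}(\vec t)=i^{|\alpha|}\,\vec t^{\,\alpha}\,\widehat f(\vec t)$ with Plancherel's theorem. Applying it with $\alpha=0$ and with $\alpha=(n+1)e_j$, and using that these derivatives lie in $L^2(\Rn)$, gives $\widehat f\in L^2(\Rn)$ and $t_j^{\,n+1}\widehat f\in L^2(\Rn)$ for each $j$; hence $P\,\widehat f\in L^2(\Rn)$, where $P(\vec t):=1+\sum_{j=1}^{n}|t_j|^{n+1}$. A Cauchy--Schwarz split then yields
\[
\|\widehat f\|_{L^1(\Rn)}\le \big\|P\,\widehat f\big\|_{L^2(\Rn)}\,\Big(\int_{\Rn}\frac{d\vec t}{P(\vec t)^2}\Big)^{1/2},
\]
and the remaining integral converges because $P(\vec t)^{-2}$ is bounded near $0$ and is $O\big(|\vec t|^{-2(n+1)}\big)$ at infinity, with $2(n+1)>n$.

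For $\widehat{\partial f/\partial\lambda_j}$ I would repeat the argument with $g:=\partial f/\partial\lambda_j\in C_c^{n}((a,b)^n)$ in place of $f$: now $\mathcal{D}^{\alpha}g\in L^2(\Rn)$ for $|\alpha|\le n$, so $Q\,\widehat g\in L^2(\Rn)$ with $Q(\vec t):=1+\sum_{k=1}^{n}|t_k|^{n}$, and Cauchy--Schwarz together with $\int_{\Rn}Q(\vec t)^{-2}\,d\vec t<\infty$ (valid since $2n>n$) gives $\widehat g\in L^1(\Rn)$. These facts together establish $f\in\Wn$.

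There is no real obstacle here — the argument is routine harmonic analysis. The only point worth watching is the role of the exponent $n+1$: one order of smoothness is consumed in passing from $f$ to $\partial f/\partial\lambda_j$, and the remaining $n$ orders in each coordinate direction supply exactly enough decay of the Fourier transform to survive the Cauchy--Schwarz split against the $n$-dimensional volume element. (Lemma \ref{P-Lem1} plays no role in this proof; it will be used elsewhere.)
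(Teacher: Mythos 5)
Your argument is correct: every step checks out, including the integrability of $P^{-2}$ and $Q^{-2}$ (since $P(\vec t)\gtrsim|\vec t|^{\,n+1}$ and $Q(\vec t)\gtrsim|\vec t|^{\,n}$ at infinity and $2n>n$), and you verify all membership conditions in \eqref{wiener}. The route differs slightly from the paper's. The paper proves the quantitative inequality \eqref{P-R1}: for $f\in C_c^{n}((a,b)^n)$, $\|\widehat f\|_{L^1(\Rn)}$ is bounded by $(\sqrt2)^n$ times the sum of the $L^2$-norms of the \emph{mixed} partial derivatives $\partial^l f/\partial\lambda_{i_1}\cdots\partial\lambda_{i_l}$ (at most one derivative in each variable), which comes from the tensor-product weight $\prod_j(1+|t_j|)$ and the same Cauchy--Schwarz/Plancherel mechanism you use; applying this to both $f$ and $\partial f/\partial\lambda_j$ gives the lemma. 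You instead use the additive weight $1+\sum_j|t_j|^{m}$ with \emph{pure} one-direction derivatives of top order ($m=n+1$ for $f$, $m=n$ for $\partial f/\partial\lambda_j$), which avoids mixed partials altogether and is equally valid for the stated inclusion. What the paper's version buys is the explicit bound \eqref{P-R1} in terms of mixed derivatives: together with Lemma \ref{P-Lem1} it is reused later to produce the constant in \eqref{P-R4} and the form of the weak trace formula \eqref{P-R5}, which are expressed through the mixed derivative $\partial^{n+1}f/\partial\lambda_1\cdots\partial\lambda_j^2\cdots\partial\lambda_n$; your qualitative proof would not feed directly into that computation, though it fully proves the lemma as stated.
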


\begin{proof}
A routine calculation shows that for $f\in C_c^{n}((a,b)^n)$,
\begin{align}
\label{P-R1}&\|\widehat{f}\|_{L^{1}(\Rn)}\le(\sqrt{2})^{n}\bigg[\|f\|_{L^{2}(\Rn)}+\sum_{l=1}^{n}\bigg\{\sum_{1\le i_{1}<\cdots<i_{l}\le n}\left\|\frac{\partial^{l}f}{\partial \lambda_{i_{1}}\cdots\partial \lambda_{i_{l}}}\right\|_{L^{2}(\Rn)}\bigg\}\bigg].
\end{align}
The proof of \eqref{P-R1} for $n=1$ can be found in \cite[Lemma 7]{PoSu09}.

Consequently, \eqref{P-R1} implies that $C_c^{n+1}((a,b)^n) \subseteq\Wn$.
\end{proof}

\begin{rmrk}
Using \eqref{P-R1} one can similarly show that $C_{c}^{n+2}((a,b)^{n})\subseteq\mathcal{W}_{2}(\Rn)$.
\end{rmrk}

Recall the notation from Notation \ref{Notation}. Let $\il$ be a symmetrically normed ideal of $\bh$, equipped with a $\|\cdot\|_{\il}$-bounded trace $\Tril:\il\to\C$. Let $H_{j}\in\bh$ and $V_{j}\in\il$ be self-adjoint operators for $j=1,\ldots,n$ such that $\Hop,\Hop(1)\in\comn$. Let $a,b\in\R$ satisfy
$$\conv\{\sigma(H_{1}+V_{1})\cup\sigma(H_{1})\}\times\cdots\times\conv\{\sigma(H_{n}+V_{n})\cup\sigma(H_{n})\}\subset(a,b)^{n}.$$ 
Let $f\in\Ccrr$. By Lemmas \ref{Kr-Lem1} and \ref{P-Lem2}, the operators $f(\Hop)$ and $f(\Hop(1))$ are well defined. Lemma \ref{Kr-Lem1} together with Theorem \ref{MOI-Thm2} ensures that
\begin{align*}
f(\Hop(1))-f(\Hop)\in\il.
\end{align*}
Moreover
\begin{align}
\nonumber f(\Hop(1))-f(\Hop)&=\sum_{j=1}^{n}T_{f_{j}^{[1]}}^{H_{1},\ldots,H_{j-1},H_{j}+V_{j},H_{j},H_{j+1}+V_{j+1},\ldots,H_{n}+V_{n}}(I,\ldots,I,\underbrace{V_{j}}_{j},I,\ldots,I)\\
\nonumber&=:\sum_{j=1}^{n}\Gamma_{j}(f,\Hop,\V).
\end{align}
Proceeding further with Corollary \ref{MOI-Crl}, we obtain
\begin{align}
\label{P-R3}\left|\Tril\left\{\Gamma_{j}(f,\Hop,\V)\right\}\right|\le\|\Tril\|_{\il^{*}}\frac{1}{(2\pi)^{n/2}}\|V_{j}\|_{\il}\left\|\widehat{\frac{\partial f}{\partial\lambda_{j}}}\right\|_{L^{1}(\Rn)},\ \ \mbox{for $1\le j\le n$}.
\end{align}
Applying \eqref{P-R1} and Lemma \ref{P-Lem1} in \eqref{P-R3} we further derive
\begin{align}
\label{P-R4}&\left|\Tril\left\{\Gamma_{j}(f,\Hop,\V)\right\}\right|\le\text{C}_{a,b,n}\|\Tril\|_{\il^{*}}\|V_{j}\|_{\il}\left\|\frac{\partial^{n+1}f}{\partial \lambda_{1}\cdots\partial \lambda_{j}^{2}\cdots\partial \lambda_{n}}\right\|_{\infty},
\end{align}
where $$\text{C}_{a,b,n}=\frac{1}{\pi^{n/2}}(b-a)^{n/2}(b-a+1)^{n}.$$
Finally, applying \eqref{P-R4}, the Hahn-Banach theorem, and the Riesz representation theorem, we deduce the existence of measures $\mu_{1},\ldots,\mu_{n}$ such that
\begin{align}
\label{P-R5}&\Tril\left\{f(\Hop(1))-f(\Hop)\right\}=\sum_{j=1}^{n}\int_{[a, b]^n}\frac{\partial^{n+1}f}{\partial \lambda_{1}\cdots\partial \lambda_{j}^{2}\cdots\partial \lambda_{n}}(\lambda_{1},\ldots,\lambda_{n})\,d\mu_{j}(\lambda_{1},\ldots,\lambda_{n}).
\end{align}

Now we proceed to prove a weaker variant of the Koplienko trace formula. For this we work under Hypothesis \ref{Assumption}.

Let $H_{j}\in\bh$ and $V_{j}\in\ideal$ be self-adjoint operators for $j=1,\ldots,n$, and assume $\Hop,\Hop(1)\in\comn$. Let $f\in C_{c}^{n+2}((a,b)^{n})$. Since Lemma \ref{Kr-Lem2} requires $\Hop(t)\in\comn$ for all $t\in[0,1]$ in order to define $\dds\bigg|_{s=0}f(\Hop(s))$. Therefore, we consider the following: 
\[(\Delta f)(\Hop,\V):=f(\Hop(1))-f(\Hop)-\dds\bigg|_{s=0}T_{f}^{H_{1}(s),\ldots,H_{n}(s)}(I,\ldots,I).\]

Using Lemma \ref{Kr-Lem2} and Theorem \ref{MOI-Thm2}, we obtain
\begin{align}
\nonumber&(\Delta f)(\Hop,\V)\\
\nonumber&=\sum_{1\le i<j\le n}T_{f_{i,j}^{[2]}}^{X_{1}^{i-1}(0),H_{i}+V_{i},H_{i},X_{i+1}^{j-1}(1),H_{j}+V_{j},H_{j},X_{j+1}^{n}(0)}(I,\ldots,I,\underbrace{V_{i}}_{i},I,\ldots,I,\underbrace{V_{j}}_{j+1},I,\ldots,I)\\
\label{P-R6}&\quad+\sum_{1\le j\le n}T_{f_{j}^{[2]}}^{X_{1}^{j-1}(0),H_{j}+V_{j},H_{j},H_{j},X_{j+1}^{n}(0)}(I,\ldots,I,\underbrace{V_{j}}_{j},V_{j},I,\ldots,I),
\end{align}
where we have used the following notation for $l,m\in\N$:
\[X_{l}^{m}(t)=\begin{cases}
H_{l}+tV_{l},\ldots,H_{m}+tV_{m},&\mbox{if }1\le l\le m\le n,\\
\varnothing,&\mbox{elsewhere}.
\end{cases}\]
Then, in a similar spirit to the proof of \eqref{P-R5}, and utilizing \eqref{MOI-Crl-R2} and \eqref{MOI-Crl-R3} in \eqref{P-R6}, we can show the existence of measures $\nu_{ij},1\le i\le j\le n$ such that
\begin{align}
\nonumber\Tril\left\{(\Delta f)(\Hop,\V)\right\}&=\sum_{1\le i<j\le n}\int_{[a, b]^n}\frac{\partial^{n+2}f}{\partial \lambda_{1}\cdots\partial \lambda_{i}^{2}\cdots\partial \lambda_{j}^{2}\cdots\partial \lambda_{n}}(\lambda_{1},\ldots,\lambda_{n})\,d\nu_{ij}(\lambda_{1},\ldots,\lambda_{n})\\
\label{P-R7}&\quad+\sum_{1\le j\le n}\int_{[a, b]^n}\frac{\partial^{n+2}f}{\partial \lambda_{1}\cdots\partial \lambda_{j}^{3}\cdots\partial \lambda_{n}}(\lambda_{1},\ldots,\lambda_{n})\,d\nu_{jj}(\lambda_{1},\ldots,\lambda_{n}).
\end{align}

\vspace*{0.3cm}	
\noindent{\it Acknowledgements:}
A. Chattopadhyay is supported by the Core Research Grant (CRG), File No: CRG/2023/004826, by the Science and Engineering Research Board (SERB), Department of Science \& Technology (DST), Government of India. S. Giri acknowledges the support provided by the Prime Minister's Research Fellowship (PMRF), PMRF-ID: 1902164, Government of India. C. Pradhan acknowledges support from the NBHM post-doctoral fellowship, Government of India. A. Usachev was partially supported by the Theoretical Physics and Mathematics Advancement Foundation \enquote{BASIS}. The authors thank the anonymous referees for a detailed reading of the text and a number of comments which improved the exposition.

\vspace{.1in}

\noindent{\bf Data Availability Statement:} Data sharing is not applicable to this article as no data sets were generated or analyzed during the current study.

\section*{Declarations} 	

\noindent{\textbf{Conflicts of Interest}:} The authors declare that they have no conflict of interest.


\begin{thebibliography}{99}
	
\bibitem{AlPe11}
A. B. Aleksandrov and V. V. Peller, {\it Functions of perturbed dissipative operators}, Algebra i Analiz, {\bf 23} (2011), no. 2, 9--51.
	
\bibitem{AlPe22}
A. B. Aleksandrov and V. V. Peller, {\it Functions of perturbed commuting dissipative operators}, Math. Nachr., {\bf 295} (2022), no. 6, 1042--1062.
	
\bibitem{AlPePo19}
A. B. Aleksandrov, V. V. Peller and D. S. Potapov, {\it On a trace formula for functions of noncommuting operators} (Russian), Mat. Zametki, {\bf 106} (2019), no. 4, 483--490. English Transl.: Math. Notes, {\bf 106} (2019), no. 3-4, 481--487.
	
\bibitem{AzDo09}
N. A. Azamov, A. L. Carey, P. G. Dodds and F. A. Sukochev, {\it Operator integrals, spectral shift, and spectral flow}, Canad. J. Math., {\bf 61} (2009), no. 2, 241--263.
	
\bibitem{AzCaSu07}
N. A. Azamov, A. L. Carey and F. A. Sukochev, {\it The spectral shift function and spectral flow}, Comm. Math. Phys., {\bf 276} (2007), no. 1, 51--91.
	
\bibitem{AzDoSu06}
N. A. Azamov, P. G. Dodds and F. A. Sukochev, {\it The Krein spectral shift function in semifinite von Neumann algebras}, Integral Equations Operator Theory, {\bf 55} (2006), no. 3, 347--362.
	
\bibitem{BiKr62}
M. S. Birman and M. G. Krein, {\it On the theory of wave operators and scattering operators}, Dokl. Akad. Nauk SSSR, {\bf 144} (1962), 475--478.
	
\bibitem{BiSo87}
M. S. Birman and M. Z. Solomyak, {\it Spectral theory of self-adjoint operators in Hilbert space}, Mathematics and its Applications (Soviet Series), D. Reidel Publishing Co., Dordrecht, 1987, xv+301 pp.
	
\bibitem{CaSu06}
A. L. Carey and F. A. Sukochev, {\it Dixmier traces and some applications to noncommutative geometry} (Russian), Uspekhi Mat. Nauk, {\bf 61} (2006), no. 6(372), 45--110. English Transl.: Russian Math. Surveys, {\bf 61} (2006), no. 6, 1039--1099.
	
\bibitem{ChCoPrGi24} 
A. Chattopadhyay, C. Coine, S. Giri and C. Pradhan, {\it Trace formulas for $\mathcal{S}^p$-perturbations and extension of Koplienko-Neidhardt trace formulas}, Preprint (2024), \href{https://arxiv.org/pdf/2411.16426}{arXiv:2411.16426} [math.FA].
	
\bibitem{AcSdCp24}
A. Chattopadhyay, S. Das and C. Pradhan, {\it Second-order trace formulas}, Math. Nachr., {\bf 297} (2024), no. 7, 2581--2608.
	
\bibitem{ChGiPr24}
A. Chattopadhyay, S. Giri and C. Pradhan, {\it Estimates and higher-order spectral shift measures in several variables}, Linear Algebra Appl., {\bf 698} (2024), 102--134.
	
\bibitem{ChaSin21}
A. Chattopadhyay and K. B. Sinha, {\it Trace formula for contractions and its representation in $\Bbb D$}, J. Operator Theory, {\bf 88} (2021), no. 2, 275--288.
	
\bibitem{ClGe02}
S. Clark and F. Gesztesy, {\it Weyl-Titchmarsh $M$-function asymptotics, local uniqueness results, trace formulas, and Borg-type theorems for Dirac operators}, Trans. Amer. Math. Soc., {\bf 354} (2002), no. 9, 3475--3534.
	
\bibitem{CoLemSkSu19} 
C. Coine, C. Le Merdy, A. Skripka and F. Sukochev, {\it Higher order $\mathcal{S}^2$-differentiability and application to Koplienko trace formula}, J. Funct. Anal., {\bf 276} (2019), no. 10, 3170--3204.
	
\bibitem{Connesbook}
A. Connes, {\it Noncommutative geometry}, Academic Press, Inc., San Diego, CA, 1994, xiv+661 pp.
	
\bibitem{Dixmier}
J. Dixmier, {\it Existence de traces non normales}, C. R. Acad. Sci. Paris S\'er. A-B, {\bf 262} (1966), A1107--A1108.
	
\bibitem{DPSS}
P. G. Dodds, B. de Pagter, E. M. Semenov and F. A. Sukochev, {\it Symmetric functionals and singular traces}, Positivity, {\bf 2} (1998), no. 1, 47--75.
	
\bibitem{DyFiWeWo04}
K. Dykema, T. Figiel, G. Weiss and M. Wodzicki, {\it Commutator structure of operator ideals}, Adv. Math., {\bf 185} (2004), no. 1, 1--79.
	
\bibitem{DySk09}
K. Dykema and A. Skripka, {\it Higher order spectral shift}, J. Funct. Anal., {\bf 257} (2009), no. 4, 1092--1132.
	
\bibitem{DySk14}
K. Dykema and A. Skripka, {\it Perturbation formulas for traces on normed ideals}, Comm. Math. Phys., {\bf 325} (2014), no. 3, 1107--1138.
	
\bibitem{MashreghiBook}
E. Fricain and J. Mashreghi, {\it The theory of $\hil(b)$ spaces. Vol. 1}, Cambridge University Press, Cambridge, vol. 20, 2016, xix+681 pp.
	
\bibitem{GoKr_book}
I. C. Gohberg and M. G. Krein, {\it Introduction to the theory of linear nonselfadjoint operators}. Translations of Mathematical Monographs, Vol. 18, American Mathematical Society, Providence, RI, 1969, xv+378 pp.
	
\bibitem{Ko84}
L. S. Koplienko, {\it The trace formula for perturbations of nonnuclear type}, Sibirsk. Mat. Zh., {\bf 25} (1984), no. 5, 62--71.
	
\bibitem{Kr53}
M. G. Krein, {\it On the trace formula in perturbation theory}, Mat. Sbornik N.S., {\bf 33/75} (1953), 597--626.
	
\bibitem{Kr62}
M. G. Krein, {\it On perturbation determinants and a trace formula for unitary and self-adjoint operators}, Dokl. Akad. Nauk SSSR, {\bf 144} (1962), 268--271.
	
\bibitem{Kr87}
M. G. Krein, {\it Perturbation determinants and a trace formula for some classes of pairs of operators}, J. Operator Theory, {\bf 17} (1987), no. 1, 129--187.
	
\bibitem{Lang65}
H. Langer, {\it Eine Erweiterung der Spurformel der St\"orungstheorie}, Math. Nachr., {\bf 30} (1965), 123--135.
	
\bibitem{Li52}
I. M. Lifshitz, {\it On a problem of the theory of perturbations connected with quantum statistics}, Uspehi Matem. Nauk (N.S.), {\bf 7} (1952), no. 1(47), 171--180.
	
\bibitem{LoSuZabook1ed}
S. Lord, F. Sukochev and D. Zanin, {\it Singular traces. Theory and applications}, De Gruyter Studies in Mathematics, vol. 46, De Gruyter, Berlin, 2013, xvi+452 pp.
	
\bibitem{LoSuZasurvey}
S. Lord, F. A. Sukochev and D. Zanin, {\it Advances in Dixmier traces and applications}, Advances in noncommutative geometry---on the occasion of Alain Connes' 70th birthday, Springer, Cham, 2019, 491--583.
	
\bibitem{LoSuZabook}
S. Lord, F. Sukochev and D. Zanin, {\it Singular traces. Vol. 1. Theory}, De Gruyter Studies in Mathematics, vol. 46/1, De Gruyter, Berlin, 2021, xxix+383 pp.
	
\bibitem{MN2015}
M. M. Malamud and H. Neidhardt, {\it Trace formulas for additive and non-additive perturbations}, Adv. Math., {\bf 274} (2015), 736--832.
	
\bibitem{PellerMD1}	
M. M. Malamud, H. Neidhardt and V. V. Peller, {\it Analytic operator Lipschitz functions in the disk and a trace formula for functions of contractions} (Russian), Funktsional. Anal. i Prilozhen., {\bf 51} (2017), no. 3, 33--55. English Transl.: Funct. Anal. Appl., {\bf 51} (2017), no. 3, 185--203.
	
\bibitem{PellerMD2}
M. M. Malamud, H. Neidhardt and V. V. Peller, {\it Absolute continuity of spectral shift}, J. Funct. Anal., {\bf 276} (2019), no. 5, 1575--1621.
	
\bibitem{MNP24}
M. M. Malamud, H. Neidhardt and V. V. Peller, {\it Real-valued spectral shift functions for contractions and dissipative operators}, Dokl. Math., {\bf 110} (2024), no. 2, 399--403.
	
\bibitem{Patel}
A. B. Patel, {\it A joint spectral theorem for unbounded normal operators}, J. Austral. Math. Soc. Ser. A, {\bf 34} (1983), no. 2, 203--213.
	
\bibitem{Pel05}
V. V. Peller, {\it An extension of the Koplienko-Neidhardt trace formulae}, J. Funct. Anal., {\bf 221} (2005), no. 2, 456--481.
	
\bibitem{PellerMOI}
V. V. Peller, {\it Multiple operator integrals and higher operator derivatives}, J. Funct. Anal., {\bf 233} (2006), no. 2, 515--544.
	
\bibitem{Pe16}
V. V. Peller, {\it The Lifshitz-Krein trace formula and operator Lipschitz functions}, Proc. Amer. Math. Soc., {\bf 144} (2016), no. 12, 5207--5215.
	
\bibitem{Pe16-Survey}
V. V. Peller, {\it Multiple operator integrals in perturbation theory}, Bull. Math. Sci., {\bf 6} (2016), no. 1, 15--88.

\bibitem{PoSu09}
D. Potapov and F. Sukochev, {\it Unbounded Fredholm modules and double operator integrals}, J. Reine Angew. Math., {\bf 626} (2009), 159--185.
	
\bibitem{PoSuToZa14}
D. Potapov, F. Sukochev, A. Tomskova and D. Zanin, {\it Fr\'echet differentiability of the norm of $L_p$-spaces associated with arbitrary von Neumann algebras}, C. R. Math. Acad. Sci. Paris, {\bf 352} (2014), no. 11, 923--927.
	
\bibitem{PoSuUsZa15}
D. Potapov, F. Sukochev, A. Usachev and D. Zanin, {\it Singular traces and perturbation formulae of higher order}, J. Funct. Anal., {\bf 269} (2015), no. 5, 1441--1481.
	
\bibitem{BarryBook}
M. Reed and B. Simon, {\it Methods of modern mathematical physics. I. Functional analysis}, Academic Press, New York-London, 1980, xv+400 pp.
	
\bibitem{Ryb84}
A. V. Rybkin, {\it The spectral shift function for a dissipative and a selfadjoint operator, and trace formulas for resonances}, Mat. Sb. (N.S.), {\bf 125(167)} (1984), no. 3, 420--430.
	
\bibitem{Sk15}
A. Skripka, {\it Trace formulas for multivariate operator functions}, Integral Equations Operator Theory, {\bf 81} (2015), no. 4, 559--580.
	
\bibitem{SkBook}
A. Skripka and A. Tomskova, {\it Multilinear operator integrals. Theory and applications}, Lecture Notes in Mathematics, vol. 2250, Springer, Cham, 2019, xi+190 pp.
	
\bibitem{SuUs16}
F. Sukochev and A. Usachev, {\it Dixmier traces and non-commutative analysis}, J. Geom. Phys., {\bf 105} (2016), 102--122.
	
\bibitem{SuZa13} 
F. Sukochev and D. Zanin, {\it Traces on symmetrically normed operator ideals}, J. Reine Angew. Math., {\bf 678} (2013), 163--200.
	
\bibitem{SzNF70}
B. Sz.-Nagy and C. Foias, {\it Harmonic analysis of operators on Hilbert space}, North-Holland Publishing Co., Amsterdam-London; American Elsevier Publishing Co., Inc., New York, 1970, xiii+389 pp.
	
\bibitem{Xia22}
J. Xia, {\it Principal measures and spectral shift measures associated with Dixmier traces}, J. Funct. Anal., {\bf 283} (2022), no. 10, Paper No. 109667, 38 pp.
	
\end{thebibliography}
\end{document}